\DeclareMathOperator*{\argmin}{\arg\min}
\theoremstyle{plain}
\newtheorem{corollary}{Corollary}
\newtheorem{lemma}{Lemma}
\newtheorem{proposition}{Proposition}
\theoremstyle{definition}
\newtheorem{assumption}{Assumption}
\newtheorem{definition}{Definition}
\theoremstyle{remark}
\newtheorem{remark}{Remark}
\title{A Memetic Procedure for \\ Global Multi-Objective Optimization \thanks{Submitted to \textit{Mathematical Programming Computation (MPC).}}}
\author{ \href{https://orcid.org/0000-0002-2488-5486}{\includegraphics[scale=0.06]{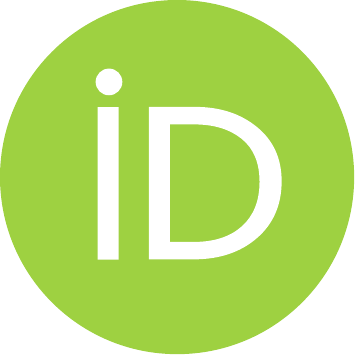}\hspace{1mm}Matteo Lapucci}\\
	Global Optimization Laboratory (GOL) \\
	Department of Information Engineering \\
	University of Florence \\
	Via di Santa Marta, 3, 50139, Florence, Italy \\
	\texttt{matteo.lapucci@unifi.it} \\
	\And
	\href{https://orcid.org/0000-0002-1394-0937}{\includegraphics[scale=0.06]{orcid.pdf}\hspace{1mm}Pierluigi Mansueto} \\
	Global Optimization Laboratory (GOL) \\
	Department of Information Engineering \\
	University of Florence \\
	Via di Santa Marta, 3, 50139, Florence, Italy \\
	\texttt{pierluigi.mansueto@unifi.it} \\
	\And
	\href{https://orcid.org/0000-0003-1160-7572}{\includegraphics[scale=0.06]{orcid.pdf}\hspace{1mm}Fabio Schoen} \\
	Global Optimization Laboratory (GOL) \\
	Department of Information Engineering \\
	University of Florence \\
	Via di Santa Marta, 3, 50139, Florence, Italy \\
	\texttt{fabio.schoen@unifi.it} \\
}
\begin{document}
\maketitle

\begin{abstract}
	In this paper we consider multi-objective optimization problems over a box. The problem is very relevant and several computational approaches have been proposed in the literature. They  broadly fall into two main classes: evolutionary methods, which are usually very good at exploring the feasible region and retrieving good solutions even in the nonconvex case, and descent methods, which excel in efficiently approximating good quality solutions. In this paper, first we confirm, through numerical experiments, the advantages and disadvantages of these approaches. Then we propose a new method which combines the good features of both. The resulting algorithm, which we call Non-dominated Sorting Memetic Algorithm (\texttt{NSMA}), besides enjoying interesting theoretical properties, excels in all of the numerical tests we performed on several, widely employed, test functions. 
\end{abstract}

\keywords{Multi-objective optimization \and Memetic algorithm \and NSGA-II \and Descent method \and Pareto front approximation}

\MSCs{90C29 \and 90C30 \and 68W20}

\section{Introduction}
\label{sec::introduction}

Multi-objective optimization problems have a significant relevance in many applications of various fields, such as engineering \cite{campana2018multi,pellegrini2014application,sun2016multi}, management \cite{gravel1992multicriterion,white1998epsilon}, statistics \cite{carrizosa1998dominating}, space exploration \cite{palermo2003system,tavana2004subjective}, etc.. It is thus not surprising that many research streams have flourished around this topic for the last 25 years.

Multi-objective optimization (MOO) exhibits two major complexities, that coupled together make problems particularly difficult to handle. The first complexity element is the general absence of a solution minimizing all the objective functions simultaneously; as a consequence, the definitions of optimality (global, local and stationarity), based on Pareto's theory, are not trivial and make optimization processes not obvious, both in terms of aims and tools. The second one, on the other hand, traces one classical issue typical of scalar optimization: in absence of convexity assumptions, there is no equivalence between local and global (Pareto) optimality.  
	
The combination of the two aforementioned features of MOO problems makes evolutionary-type algorithms (EAs) particularly well suited to be used and, indeed, they have been the most widely studied class of algorithms for decades in this context \cite{laumanns2002combining,mostaghim2007multi}. The \texttt{NSGA-II} algorithm \cite{deb2002fast} is arguably the most popular among these methods; basically, it is a population-based procedure exploiting a cheaply computable score to efficiently rank solutions w.r.t.\ the objectives, and performing the classical genetic crossover, mutation and selection operations to create the new generation of solutions.
Basically, \texttt{NSGA-II} represents the de facto standard, at least popularity-wise, for unconstrained and bound-constrained MOO.
	
In fact, alongside the EA stream, two different classes of approaches have been studied for MOO. The first one concerns scalarization approaches \cite{drummond2008choice,eichfelder2009adaptive,pascoletti1984scalarizing}. However, solving MOO problems by scalarization has some drawbacks: firstly, unfortunate choices of weights may lead to unbounded scalar problems, even under strong regularity assumptions \cite[Section 7]{fliege2009newton}; moreover, scalarization is designed to produce a single solution and, in order to generate an approximation of the whole Pareto front, the problem has to be solved with different choices of weights; unfortunately, it is not known a priori how weights should be selected to obtain a wide and uniform Pareto front.
	
The other family of methods is that of MO descent methods (either first-order, second-order and derivative-free) \cite{carrizo2016trust,cocchi2020augmented,drummond2004projected,fliege2009newton,fliege2000steepest,fukuda2019barrier,gonccalves2020globally}. These methods mimic classical iterative scalar optimization algorithms. 

Originally, these methods were designed to generate a single Pareto-stationary solution, but in recent years specific strategies have been proposed to handle lists of points and to generate approximations of the entire Pareto front \cite{COCCHI2021100008, cocchi2020convergence,cocchi2018implicit,custodio2011direct,fliege2016method,liuzzi2016derivative}.  
Numerical results show that these methods, when used on problems with reasonable regularity assumptions, are effective and much more efficient than evolutionary methods, especially as the problem size grows \cite{COCCHI2021100008, cocchi2020convergence,custodio2011direct}. 

On the other hand, descent algorithms have convergence properties that are theoretically relevant but, in practice, guarantee only Pareto stationarity of the retrieved solutions. This is a significant limitation with  highly non-convex problems, similarly as for gradient-based algorithms in scalar optimization that may converge to stationary points which are not even local optima. 

In scalar global optimization, particularly successful strategies are memetic ones. Memetic algorithms combine population-based techniques (either heuristic and/or genetic ones) and local search steps \cite{CABASSIDE,GRIBELHG,grosso2007population,LOCATELLIDE,locatelli2013global,MANSUETO2021107849}.
In the case of MOO, this idea has only superficially been considered. In fact, each class of MOO algorithms has practical drawbacks. EAs have no theoretical convergence property \cite{fliege2009newton} and are usually expensive \cite{hu2003hybridization,liu2017improved,Sindhya2013495}. On the other hand, descent algorithms often produce suboptimal solutions when starting from non carefully chosen points and are thus not suitable for highly non-convex problems.
For these reasons, some MO memetic approaches have been proposed in the literature. However, we can find approaches that are mostly application-specific  \cite{wang2011multi} or that employ heuristic \cite{Kim2014290,Lara2010,liu2007multiobjective,mandal_memetic_2015}, meta-heuristic \cite{Bandyopadhyay2008,villalobos2018memetic}, stochastic \cite{drugan2012,Filatovas2017859} or scalarization-based \cite{hu2003hybridization,Sindhya2013495,Tiwari2009} local search steps. Even the few proposed strategies employing gradient information for the local search steps do not exploit the concept of common descent directions. Rather, convex combinations of gradients are generated and exploited in various ways \cite{Bhuvana2015,brown2005directed,Lara2010,liu2017improved,shukla2007gradient}.

In this paper, we show by computational experiments the benefits and limitations of evolutionary and MO descent algorithms in different settings (high/low dimensionality, convex/non-convex objectives). Then, we propose a memetic algorithm for bound-constrained MOO problems, combining an evolutionary approach (namely, the popular \texttt{NSGA-II}) with MO descent methods, similarly to what is done in the scalar case in \cite{grosso2007population}. We finally show that the proposed method, that inherits the good features of both EA and MO descent families, outperforms the state-of-the-art MOO solvers in any setting.
	
The rest of the manuscript is organized as follows. In Section 2, we recall the main concepts of both the descent methods and the \texttt{NSGA-II} algorithm. In Section 3, we provide a description of our memetic algorithm along with a theoretical analysis. In Section 4, we first compare the two families of algorithms in some specific problems, in order to show the benefits and the shortcomings of both. Then, we show the results of computational experiments highlighting the good performance of our approach w.r.t.\ main state-of-the-art methods. Finally, in Section 5 we provide some concluding remarks.

\section{Preliminaries}
\label{sec::preliminaries}
In this work, we consider multi-objective optimization problems of the form
\begin{equation}
	\label{eq::mo-prob}
	\begin{aligned}
	\min_{x\in\mathbb{R}^n}\;&F(x)=(f_1(x),\ldots,f_m(x))^T\\\text{s.t. }&x \in [l, u],
	\end{aligned}
\end{equation}
where $F:\mathbb{R}^n\to\mathbb{R}^m$ is a continuously differentiable function and $l,u\in\mathbb{R}^n$ with $l_i\le u_i \hspace{0.15cm} \forall i \in \{1,\ldots, n\}$. The values of $l$ and $u$
may possibly be infinite. Given the boundary constraints, we denote the feasible set, which is closed, convex and non empty, by $\Omega = \{x \in \mathbb{R}^n \mid x \in [l, u]\}$. We denote by $J_F$ the Jacobian matrix associated with $F$.

In order to introduce some preliminary concepts of multi-objective optimization, we define a partial ordering of the points in $\mathbb{R}^m$. Considering two points $u, v \in \mathbb{R}^m$, we define

\begin{equation*}
	\begin{array}{lcr}
		u < v \iff u_i < v_i \hspace{0.9cm} \forall i = 1,\ldots, m, \\
		u \le v \iff u_i \le v_i \hspace{0.9cm} \forall i= 1,\ldots, m.
	\end{array}
\end{equation*}
If $u \le v$ and $u \not = v$, we can say that $u$ dominates $v$ and we use the following notation: $u \lneqq v$. Finally, we say that the point $x \in \mathbb{R}^n$ dominates $y \in \mathbb{R}^n$ w.r.t.\ $F$ if  $F(x) \lneqq F(y)$.

In multi-objective optimization problems, we ideally would like to obtain a point which simultaneously minimizes all the objectives $f_1,\ldots, f_m$. However, such a solution is unlikely to exist. For this reason, the Pareto optimality concepts have been introduced.

\begin{definition}
	A point $\bar{x}\in\Omega$ is \textit{Pareto optimal} for Problem \eqref{eq::mo-prob} if a point $y\in\Omega$ such that $F(y)\lneqq F(\bar{x})$ does not exist.
	If there exists a neighborhood $\mathcal{N}(\bar{x}) \supset \{\bar{x}\}$ such that the previous property is satisfied in $\Omega\cap \mathcal{N}(\bar{x})$, then $\bar{x}$ is \textit{locally Pareto optimal}. 
\end{definition}
In practice, it is difficult to attain solutions characterized by the Pareto optimality property. A slightly weaker property is weak Pareto optimality.

\begin{definition}
	A point $\bar{x}\in\Omega$ is \textit{weakly Pareto optimal} for Problem \eqref{eq::mo-prob} if a point $y\in\Omega$ such that $F(y)< F(\bar{x})$ does not exist. If there exists a neighborhood $\mathcal{N}(\bar{x}) \supset \{\bar{x}\}$ such that the previous property is satisfied in $\Omega\cap \mathcal{N}(\bar{x})$, then $\bar{x}$ is \textit{locally weakly Pareto optimal}.
\end{definition}
We refer to  the set of all Pareto optimal solutions of the problem as the \textit{Pareto set}, while by \textit{Pareto front} we refer to the image of the Pareto set through $F$.

We can now introduce the concept of Pareto stationarity.
\begin{definition}
	\label{def::par-stat}
	A point $\bar{x}\in\Omega$ is \textit{Pareto-stationary} for Problem \eqref{eq::mo-prob} if we have that 
	\begin{equation*}
		\max_{j=1,\ldots,m}\nabla f_j(\bar{x})^Td\ge 0,
	\end{equation*}
	for all feasible directions $d\in\mathcal{D}(\bar{x})=\{v\in\mathbb{R}^n\mid \exists\bar{t}>0:\bar{x}+tv\in \Omega\; \forall\, t\in[0,\bar{t}\,]\}$.
\end{definition}
Under differentiability assumptions, Pareto stationarity is a necessary condition for all types of Pareto optimality. Further assuming the convexity of the objectives in Problem \eqref{eq::mo-prob}, the condition is also sufficient for Pareto optimality. The property can be compactly re-written as 
\begin{equation*}
	\min_{d \in \mathcal{D}(\bar{x})} \max_{j=1,\ldots,m} \nabla f_j(\bar{x})^Td = 0.
\end{equation*}

Finally, we introduce a relaxation of Pareto stationarity, recalling the $\varepsilon$-Pareto-stationarity concept introduced in \cite{cocchi2020augmented}.

\begin{definition}
	Let $\varepsilon \ge 0$. A point $\bar{x} \in \Omega$ is $\varepsilon$-Pareto-stationary for Problem \eqref{eq::mo-prob} if 
	\begin{equation*}
		\min_{\substack{d \in \mathcal{D}(\bar{x})\\\|d\|\le 1}} \max_{j=1,\ldots,m} \nabla f_j(\bar{x})^Td \ge -\varepsilon.
	\end{equation*}
\end{definition}

In the following, we briefly review evolutionary and descent algorithms for MOO, with particular emphasis on the \texttt{NSGA-II} algorithm and steepest/projected gradient descent methods respectively.

\subsection{Multi-Objective Descent Methods}
\label{subsec::descent-methods}


First of all, let us address the following unconstrained optimization problem.
\begin{equation}
	\label{eq::unc-mo-prob}
	\begin{aligned}
		\min_{x\in\mathbb{R}^n}\;& F(x)=(f_1(x),\ldots,f_m(x))^T.
	\end{aligned}
\end{equation}

If a point $\bar{x} \in \mathbb{R}^n$ is not Pareto-stationary, then there exists a descent direction w.r.t.\ all the objectives. 
Therefore, following \cite[Section 3.1]{fliege2000steepest}, we can introduce the \textit{steepest common descent direction} as the solution of problem
\begin{equation}
	\label{eq::ste-com-desc}
	\min_{\substack{d\in\mathbb{R}^n \\ \left\| d \right\| \le 1}}\max_{j=1,\ldots,m} \nabla f_j(\bar{x})^Td,
\end{equation}
which, if $\ell_\infty$ norm is employed, can be re-written as an LP one:

\begin{equation}
	\begin{aligned}
		\min_{\beta\in\mathbb{R},\,d \in \mathbb{R}^n}\;& \beta\\
		\text{s.t. }&-1\le d_i\le 1\quad \hspace{0.15cm} \forall\,i=1,\ldots,n,\\
		&\nabla f_j(\bar{x})^Td\le \beta\quad \forall\,j=1,\ldots,m.
	\end{aligned}
\end{equation}
A slightly different problem formulation is the $\ell_2$-regularized one, which is again proposed in \cite{fliege2000steepest}. However, we preferred to use Formulation \eqref{eq::ste-com-desc} because of the simplicity of the LP problem.
We define the continuous function $\theta: \mathbb{R}^n \rightarrow \mathbb{R}$ such that $\theta(\bar{x})$ indicates the optimal value of Problem \eqref{eq::ste-com-desc} at $\bar{x}$. If $\bar{x}$ is Pareto-stationary, $\theta(\bar{x}) = 0$, otherwise $\theta(\bar{x}) < 0$. We also denote by $v(\bar{x}) \subseteq \mathbb{R}^n$ the set of optimal solutions to Problem \eqref{eq::ste-com-desc}. Indeed, the solution may not be unique, although this fact is not a real technical issue.

Based on the concept of steepest common descent direction, the standard \textit{Multi-Objective Steepest Descent} (\texttt{MOSD}) algorithm was proposed in \cite{fliege2000steepest}. In \texttt{MOSD}, a back-tracking \textit{Armijo-type Line Search} (\texttt{ALS}) is used. The idea of this latter one is to reduce the step size until we get a sufficient decrease for all the objective functions. We report \texttt{ALS} in Algorithm \ref{alg::ALS}.

We now recall the main theoretical results of the two algorithms, starting from the finite termination property of the line search.

\begin{lemma}{\cite[Lemma 4]{fliege2000steepest}}
	If $F$ is continuously differentiable and $J_F(x)d<0$ (i.e.\, $\theta(x)<0$), then there exists some $\varepsilon > 0$, which may depend on $x$, $d$ and $\beta$, such that 
	\begin{equation*}
		F(x+td) < F(x) + \beta t J_F(x)d
	\end{equation*}
	for all $t\in(0,\varepsilon]$.
\end{lemma}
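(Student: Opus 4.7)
The plan is to reduce the vector inequality to $m$ scalar inequalities, one per component of $F$, and handle each of them by a standard first-order (Taylor) argument. The hypothesis $J_F(x)d<0$ says precisely that $\nabla f_j(x)^T d<0$ for every $j=1,\ldots,m$, which gives a strictly negative slope along $d$ for every objective. Recall that $\beta$ is the usual Armijo parameter with $\beta\in(0,1)$, so $1-\beta>0$; this is what makes the argument work.

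Fix $j\in\{1,\ldots,m\}$. By continuous differentiability of $f_j$,
\begin{equation*}
    f_j(x+td) = f_j(x) + t\,\nabla f_j(x)^T d + r_j(t),
\end{equation*}
with $r_j(t)/t \to 0$ as $t\to 0^+$. Subtracting $\beta t\,\nabla f_j(x)^T d$ from both sides rearranges to
\begin{equation*}
    f_j(x+td) - f_j(x) - \beta t\,\nabla f_j(x)^T d
    = (1-\beta)\,t\,\nabla f_j(x)^T d + r_j(t).
\end{equation*}
Since $(1-\beta)\,\nabla f_j(x)^T d<0$, dividing by $t>0$ gives
\begin{equation*}
    \frac{1}{t}\bigl[f_j(x+td) - f_j(x) - \beta t\,\nabla f_j(x)^T d\bigr]
    \;\longrightarrow\; (1-\beta)\,\nabla f_j(x)^T d \;<\; 0
    \qquad \text{as } t\to 0^+,
\end{equation*}
so by continuity there exists $\varepsilon_j=\varepsilon_j(x,d,\beta)>0$ such that the bracketed quantity is strictly negative for every $t\in(0,\varepsilon_j]$.

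Setting $\varepsilon := \min_{j=1,\ldots,m}\varepsilon_j > 0$ (finite minimum of positive numbers) yields, for every $t\in(0,\varepsilon]$ and every $j$, the componentwise strict inequality $f_j(x+td) < f_j(x) + \beta t\,\nabla f_j(x)^T d$. Stacking these for $j=1,\ldots,m$ gives $F(x+td) < F(x) + \beta t\,J_F(x)d$, which is exactly the claim. The only subtle point is the implicit assumption $\beta<1$: if $\beta\ge 1$ the leading coefficient $(1-\beta)\nabla f_j(x)^T d$ would be nonnegative and the argument would collapse, but in the Armijo-type setup of Algorithm \ref{alg::ALS} this assumption is automatic. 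No other obstacle arises, as all estimates are purely pointwise and $m$ is finite.
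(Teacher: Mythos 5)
Your argument is correct and is essentially the standard proof of this result: the paper itself does not reprove the lemma but cites it from Fliege and Svaiter, whose proof is the same componentwise first-order Taylor expansion exploiting $(1-\beta)\nabla f_j(x)^T d<0$ and a finite minimum over the $m$ objectives. Nothing is missing, and your remark that $\beta\in(0,1)$ is essential is consistent with the line-search setup in the paper.
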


\begin{algorithm}[h]
	\caption{Armijo-type Line Search} \label{alg::ALS}
	Input: $F:\mathbb{R}^n\to \mathbb{R}^m$, $x_k\in\mathbb{R}^n$, $d_k \in v(x_k)$,  $\alpha_0>0$, $\delta\in(0,1)$, $\beta\in(0,1)$. \\
	$\alpha=\alpha_0$\\
	\While{$F(x_k+\alpha d_k)\nleq F(x_k) + \beta \alpha J_F(x_k)d_k$}{
		$\alpha=\delta\alpha$
	}
	\Return $\alpha$
\end{algorithm}
\noindent Regarding the \texttt{MOSD} procedure, the following convergence property holds.
\begin{lemma}{\cite[Theorem 1, Section 9.1]{fliege2000steepest}}
	\label{lem::conv-MOSD}
	Every accumulation point of the sequence $\{x_k\}$ produced by the \texttt{MOSD} algorithm is a Pareto-stationary point. If the function $F$ has bounded level sets, in the sense that $\{x\in\mathbb{R}^n\mid F(x)\le F(x_0)\}$ is bounded, then the sequence $\{x_k\}$ stays bounded and has at least one accumulation point.
\end{lemma}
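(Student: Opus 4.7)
The plan is to adapt the standard line-search convergence argument to the multi-objective setting. First I would establish componentwise monotonicity of $\{F(x_k)\}$: at each iteration either $\theta(x_k) = 0$ (so $x_k$ is already Pareto-stationary and the algorithm has terminated) or $\theta(x_k) < 0$, in which case Lemma~1 ensures that the backtracking in Algorithm~\ref{alg::ALS} produces in finitely many reductions some $\alpha_k > 0$ satisfying $F(x_{k+1}) \le F(x_k) + \beta \alpha_k J_F(x_k) d_k$. Since each component of $J_F(x_k) d_k$ is at most $\theta(x_k) < 0$, this gives $F(x_{k+1}) \lneqq F(x_k)$. Consequently the whole sequence lies in the level set $\{x : F(x) \le F(x_0)\}$, so under the bounded-level-set hypothesis $\{x_k\}$ is bounded and possesses accumulation points, proving the second assertion.

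For Pareto-stationarity, fix an accumulation point $\bar{x}$ with $x_k \to \bar{x}$ along a subsequence $K$. Continuity of $F$ together with the componentwise monotonicity of $\{F(x_k)\}$ yields $F(x_k) \to F(\bar{x})$ along the whole sequence. Writing $F(x_{k+1}) - F(x_k) \le \beta \alpha_k J_F(x_k) d_k \le 0$ and passing to the limit then forces $\alpha_k\, J_F(x_k) d_k \to 0$ componentwise, and maximizing over the $m$ objectives gives $\alpha_k\, \theta(x_k) \to 0$.

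To conclude $\theta(\bar{x}) = 0$ I would distinguish two cases. If on a sub-subsequence $\alpha_k \ge \alpha^{\ast} > 0$, then $\theta(x_k) \to 0$ on that sub-subsequence, and continuity of the value function $\theta$ (a standard consequence of parametric LP sensitivity applied to~\eqref{eq::ste-com-desc}) yields $\theta(\bar{x}) = 0$. Otherwise $\alpha_k \to 0$ on $K$, so eventually the trial step $\alpha_k/\delta$ must have failed the Armijo test: there exists $j_k \in \{1,\ldots,m\}$ with $f_{j_k}\!\bigl(x_k + (\alpha_k/\delta) d_k\bigr) > f_{j_k}(x_k) + \beta (\alpha_k/\delta)\, \nabla f_{j_k}(x_k)^T d_k$. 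By pigeonhole on the finite set of objectives I may assume $j_k \equiv j$, and by compactness of the $\ell_\infty$ unit ball containing the $d_k$'s I may assume $d_k \to \bar{d}$ with $\|\bar{d}\| \le 1$. Dividing by $\alpha_k/\delta$ and letting $k \to \infty$ via a first-order Taylor expansion gives $(1-\beta)\, \nabla f_j(\bar{x})^T \bar{d} \ge 0$, hence $\nabla f_j(\bar{x})^T \bar{d} \ge 0$. By continuity of $\theta$ the limit $\bar{d}$ is optimal for~\eqref{eq::ste-com-desc} at $\bar{x}$, so $\theta(\bar{x}) = \max_i \nabla f_i(\bar{x})^T \bar{d} \ge \nabla f_j(\bar{x})^T \bar{d} \ge 0$; combined with $\theta(\bar{x}) \le 0$ this forces $\theta(\bar{x}) = 0$, i.e., $\bar{x}$ is Pareto-stationary.

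The main technical obstacle is the case $\alpha_k \to 0$: one must extract a convergent sub-subsequence of directions from the compact $\ell_\infty$ unit ball, handle the varying index $j_k$ by pigeonhole, and most delicately justify that the limit $\bar{d}$ is itself an optimal solution of the limiting LP --- the usual continuity / outer-semicontinuity statement for the optimal-value and optimal-solution mappings of a parametric LP.
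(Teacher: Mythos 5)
Your argument is correct, but note that the paper does not prove this statement at all: Lemma~\ref{lem::conv-MOSD} is simply quoted from Fliege--Svaiter, so the relevant comparison is with the original proof and with the paper's own analogous analysis of \texttt{FMOPG} (Proposition~\ref{prop::conv-lim-po}). Your monotonicity step, the deduction $\alpha_k\,\theta(x_k)\to 0$, and the case $\alpha_k$ bounded away from zero coincide with the standard argument. Where you genuinely diverge is the case $\alpha_k\to 0$: you extract $d_k\to\bar d$, pass to the limit in the failed Armijo test to get $\nabla f_j(\bar x)^T\bar d\ge 0$, and then need the outer semicontinuity of the solution mapping of \eqref{eq::ste-com-desc} to conclude that $\bar d$ is optimal at $\bar x$, hence $\theta(\bar x)=0$. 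This is legitimate and in fact cheap here, since the feasible set is a fixed compact ball: joint continuity of $(x,d)\mapsto\max_j\nabla f_j(x)^Td$ plus continuity of $\theta$ (which the paper assumes as known) already give $\max_j\nabla f_j(\bar x)^T\bar d=\lim\theta(x_k)=\theta(\bar x)$, so optimality of $\bar d$ does not require any heavier parametric-LP machinery; you might state it this way rather than appealing to ``LP sensitivity''. The route taken in Proposition~\ref{prop::conv-lim-po} (and in Fliege--Svaiter) is slightly leaner at exactly this point: one assumes by contradiction $\theta(\bar x)<0$, uses continuity of $\theta$ to get $\theta(x_k)\le-\bar\varepsilon$ along the subsequence, and applies the mean value theorem to the failed Armijo inequality, so that the error term $[\nabla f_{\hat\jmath}(\xi_k)-\nabla f_{\hat\jmath}(x_k)]^Td_k$ vanishes using only boundedness of the directions; no convergent subsequence of $d_k$ and no statement about the argmin map are needed. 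Both approaches are valid; yours buys a direct (non-contradiction) conclusion at the price of the extra compactness/semicontinuity extraction, which you did identify and justify correctly.
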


Through the years, the \texttt{MOSD} procedure was extended to handle a sequence of sets of non-dominated points, rather than a sequence of points, aiming to approximate the Pareto front of the optimization problems. Indeed, in the multi-objective context, an approximation of the Pareto front could be much more useful than a single solution: the user is free to choose, a posteriori, the solution providing the most appropriate trade-off among many. An algorithm representing an extension of the \texttt{MOSD} procedure is the one introduced in \cite{cocchi2020convergence}, which is called \textit{Front Steepest Descent Algorithm} (\texttt{FSDA}). 

In the next lemma, we report the convergence property of the algorithm, where the authors use the concept of \textit{linked sequence} introduced in \cite{liuzzi2016derivative}.

\begin{definition}
	\label{def::link-seq}
	A sequence $\{x_k\}$ is a linked sequence if, for all $k$, $x_k \in X_k$ and $x_k$ is generated at iteration $k - 1$ starting the search procedure from $x_{k - 1}$.
\end{definition}

\begin{lemma}{\cite[Proposition 5]{cocchi2020convergence}}
	Let $\{X_k\}$ be the sequence of sets of non-dominated points produced by \texttt{FSDA}. Let us assume that there exists a point $x_0 \in X_0$ such that:
	\begin{itemize}
		\item $x_0$ is not Pareto-stationary;
		\item the set $\mathcal{L}(x_0) = \bigcup_{j=1}^{m}\{x \in \mathbb{R}^n: f_j(x) \le f_j(x_0)\}$ is compact.
	\end{itemize}
	Let $\{x_k\}$ be a linked sequence, then it admits limit points and every limit point is Pareto-stationary for Problem \eqref{eq::unc-mo-prob}.
\end{lemma}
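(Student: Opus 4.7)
The plan is to show first that any linked sequence stays in the compact sublevel-type set $\mathcal{L}(x_0)$, and then to adapt the standard Armijo-type argument (as in the proof of Lemma~\ref{lem::conv-MOSD} above) to the linked-sequence setting.

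First, I would verify that for each $k$, $F(x_k) \le F(x_{k-1})$. By Definition~\ref{def::link-seq}, $x_k$ is obtained from $x_{k-1}$ by the \texttt{FSDA} search procedure. There are two cases: either $x_{k-1}$ was already Pareto-stationary so the procedure leaves it unchanged, in which case $x_k = x_{k-1}$; or a common descent direction $d_{k-1}\in v(x_{k-1})$ with $\theta(x_{k-1})<0$ was computed and an \texttt{ALS} step was performed, producing $\alpha_{k-1}>0$ with
\begin{equation*}
F(x_k) \le F(x_{k-1}) + \beta\,\alpha_{k-1}\, J_F(x_{k-1})d_{k-1} \lneqq F(x_{k-1}).
\end{equation*}
Iterating, $F(x_k) \le F(x_0)$ componentwise, hence $f_j(x_k) \le f_j(x_0)$ for every $j$ and in particular $x_k \in \mathcal{L}(x_0)$. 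Compactness of $\mathcal{L}(x_0)$ then guarantees the existence of limit points, which is the first claim.

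Next, let $\bar{x}$ be a limit point of the linked sequence and let $\{x_k\}_K \to \bar{x}$ be the corresponding subsequence. I would argue by contradiction, assuming $\theta(\bar x)<0$. Continuity of $F$ and of the value function $\theta$ (standard for the max-min in \eqref{eq::ste-com-desc}) imply that there exist $\bar\eta>0$ and $k_0$ such that $\theta(x_k) \le -\bar\eta$ for every $k\in K$ with $k\ge k_0$. Along a tail of this subsequence, the descent direction $d_{k-1}$ is nontrivial and the line search produces some $\alpha_{k-1}>0$ with the Armijo decrease. Now split the analysis into two cases. If $\liminf_{k\in K}\alpha_{k-1}=\alpha^\star>0$, summing the Armijo inequalities for one fixed objective $f_j$ along the linked sequence would yield
\begin{equation*}
f_j(x_k) \le f_j(x_0) + \beta\sum_{i=1}^{k}\alpha_{i-1}\theta(x_{i-1}),
\end{equation*}
and the right-hand side tends to $-\infty$, contradicting the boundedness of $F$ on the compact set $\mathcal{L}(x_0)$.

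In the other case, $\alpha_{k-1}\to 0$ along a further subsequence. Here I would reproduce the classical Armijo failure argument: for $k$ large enough, $\alpha_{k-1}/\delta$ was tested and rejected, so for at least one index $j(k)\in\{1,\dots,m\}$,
\begin{equation*}
f_{j(k)}\!\left(x_{k-1} + \tfrac{\alpha_{k-1}}{\delta}d_{k-1}\right) > f_{j(k)}(x_{k-1}) + \beta\tfrac{\alpha_{k-1}}{\delta}\nabla f_{j(k)}(x_{k-1})^T d_{k-1}.
\end{equation*}
Passing to a subsubsequence along which $j(k)\equiv \bar j$ and $d_{k-1}\to\bar d$ with $\|\bar d\|\le 1$ (using compactness of the unit ball and of $\mathcal{L}(x_0)$), a mean-value expansion and the fact that $\alpha_{k-1}/\delta\to 0$ give $\nabla f_{\bar j}(\bar x)^T\bar d \ge \beta\nabla f_{\bar j}(\bar x)^T\bar d$, whence $\nabla f_{\bar j}(\bar x)^T\bar d \ge 0$. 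A continuity argument on the optimal value $\theta$ then yields $\max_j \nabla f_j(\bar x)^T\bar d \ge 0 > \theta(\bar x)$, contradicting $\bar d$ being a limit of optimal solutions of \eqref{eq::ste-com-desc} at $x_{k-1}$.

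The main obstacle I expect is the \emph{linked-sequence bookkeeping}: unlike \texttt{MOSD}, in \texttt{FSDA} the set $X_k$ carries many points, and a point in $X_k$ may either be inherited (possibly without improvement) or generated from a specific parent in $X_{k-1}$. One must therefore use Definition~\ref{def::link-seq} to ensure that, at every step where the subsequence indices advance, the Armijo decrease really is enforced at the pair $(x_{k-1},x_k)$, and that the direction $d_{k-1}$ used is indeed a solution of \eqref{eq::ste-com-desc} at $x_{k-1}$. Once this is pinned down, the rest of the argument is a direct transcription of the classical Armijo-based proof.
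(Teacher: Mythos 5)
There is a genuine gap, and it is located exactly where you wave it away in your final paragraph: you assume that along a linked sequence the pair $(x_{k-1},x_k)$ satisfies the full \texttt{ALS}-type Armijo condition $F(x_k)\le F(x_{k-1})+\beta\alpha_{k-1}J_F(x_{k-1})d_{k-1}$. \texttt{FSDA} does not use \texttt{ALS}; it uses the front line search \texttt{FALS} (Algorithm \ref{alg::FALS}), whose acceptance test only requires that the trial point is not dominated, with margin $\mathbf{1}\beta\alpha\theta$, by \emph{any point of the current set} --- it does not enforce decrease of all (or even one fixed) objective with respect to the parent $x_{k-1}$. Indeed the paper itself stresses (after Assumption \ref{ass::par-stat-bound}, and in Section \ref{subsubsec:algorithmic-scheme}) that there is no guarantee that $x_{k+1}$ dominates $x_k$, and that the compactness assumption on $\mathcal{L}(x_0)$ is needed precisely because this criterion is weaker than \texttt{ALS}. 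This error propagates through all three stages of your argument. First, $F(x_k)\le F(x_0)$ componentwise is false in general; the correct reason why $x_k\in\mathcal{L}(x_0)$ exploits the \emph{union} structure of $\mathcal{L}(x_0)$: non-domination of $x_k$ (w.r.t. $x_0$, or w.r.t. a set point that dominates the removed $x_0$) only yields \emph{one} index $j$ with $f_j(x_k)\le f_j(x_0)$, and that is all the assumption is designed to give. Second, your telescoping sum in the case $\liminf\alpha_{k-1}>0$ needs a fixed objective that decreases monotonically along the linked sequence, which \texttt{FALS} does not provide. Third, when $\alpha_{k-1}\to 0$, the rejected step $\alpha_{k-1}/\delta$ does not yield failure of an Armijo inequality at $x_{k-1}$; it yields the existence of \emph{some} $y_k$ in the set with $F(y_k)+\mathbf{1}\beta(\alpha_{k-1}/\delta)\theta(x_{k-1}) < F\bigl(x_{k-1}+(\alpha_{k-1}/\delta)d_{k-1}\bigr)$, and one must bridge $f_j(y_k)$ back to $f_j(x_{k-1})$ via the non-domination of the current iterate before applying the mean-value theorem.

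The correct architecture is the one the paper reproduces for \texttt{FMOPG} in Proposition \ref{prop::conv-lim-po} (which mirrors the cited Proposition 5 of Cocchi et al.): (i) membership in $\mathcal{L}(x_0)$ via the one-index non-domination argument, giving limit points; (ii) a contradiction argument showing $\alpha_k\theta(x_k)\to 0$ that uses the margin $-\beta\bar\eta$ in the non-domination condition between two far-apart indices of the subsequence (both tending to $\bar x$) together with continuity of $F$ --- no summation and no monotonicity; (iii) hence $\alpha_k\to 0$, and then the \texttt{FALS}-failure inequality involving $y_k$, combined with Corollary \ref{cor::non-dom-new-po}-type non-domination and the mean-value theorem, to contradict $\theta(\bar x)<0$. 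Your closing claim that ``the Armijo decrease really is enforced at the pair $(x_{k-1},x_k)$'' is precisely what fails for \texttt{FSDA}, so the proof as written does not go through without being rebuilt along these lines.
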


In \cite{cocchi2020convergence}, two concepts are introduced. The first one is the \textit{steepest partial descent direction} at $\bar{x}$ w.r.t.\ a subset of indices of objectives $I \subseteq \{1,\ldots, m\}$. This type of direction at a point $\bar{x}$ can be found solving this optimization problem:

\begin{equation}
	\label{eq::ste-par-desc}
	\min_{\substack{d\in\mathbb{R}^n \\ \left\| d \right\| \le 1}} \max_{j \in I} \nabla f_j(\bar{x})^Td.
\end{equation}
As for the steepest common descent direction, we define the continuous function $\theta^I: \mathbb{R}^n \rightarrow \mathbb{R}$, where $\theta^I(\bar{x})$ indicates the optimal value of Problem \eqref{eq::ste-par-desc} at $\bar{x}$. Accordingly, we denote by $v^I(\bar{x}) \subseteq \mathbb{R}^n$ the set of optimal solutions to the Problem \eqref{eq::ste-par-desc}.
If appropriately used, the steepest partial descent direction may be useful to spread the search in the objectives space and to reach the extreme regions of the Pareto front. 

For our purposes, given a subset $I$, we also define $F_I(\bar{x})$ as the $|I|$-dimensional vector with components $f_j(\bar{x})$, with $j \in I$. In addition, given a set of points $\bar{X}$, we introduce the set $\bar{X}^I \subseteq \bar{X}$ as the set of points that are mutually non-dominated w.r.t.\ $F_I$, i.e.\
\begin{equation*}
	\bar{X}^I = \{x \in \bar{X} \mid \nexists y \in \bar{X} \text{ s.t. } F_I(y) \lneqq F_I(x)\}.
\end{equation*}
 
The second concept introduced in \cite{cocchi2020convergence} is a weaker front-based variant of \texttt{ALS}: we call it the \textit{Front Armijo-Type Line Search} (\texttt{FALS}). We report it in Algorithm \ref{alg::FALS}.  In \texttt{FALS}, the step size is reduced until a sufficient decrease is reached w.r.t.\ all the points in $\bar{X}^I$ for at least one of the objective functions $f_j$, with $j \in I$. \texttt{FALS} can be considered as a weak extension of \texttt{ALS} to the multi-objective case. As it is not required to obtain a sufficient decrease for all the objective functions, employing \texttt{FALS} leads to two consequences: less required computational time and bigger values for the step size. These features again may be very useful to obtain good and spread Pareto front approximations in a short time.
\begin{algorithm}[h]
	\caption{Front Armijo-type Line Search} \label{alg::FALS}
	Input: $F:\mathbb{R}^n\to \mathbb{R}^m$, $I \subseteq \{1,\ldots, m\}$, $X_k^I$ set of mutually non-dominated points w.r.t.\ $F_I$, $x_c\in X_k^I$, $d_c^I \in v^I(x_c)$, $\theta^I(x_c) \in \mathbb{R}$, $\alpha_0>0$, $\delta\in(0,1)$, $\beta\in(0,1)$. \\
	$\alpha=\alpha_0$\\
	\While{$\exists y \in X_k^I \text{ s.t. } F_I(y) + \mathbf{1}\beta\alpha\theta^I(x_c) < F_I(x_c + \alpha d_c^I)$}{
		$\alpha=\delta\alpha$
	}
	\Return $\alpha$
\end{algorithm}

\texttt{FALS} has a finite termination property, which we recall in the following lemma.

\begin{lemma}{\cite[Proposition 4]{cocchi2020convergence}}
	Let $I \subseteq \{1,\ldots, m\}$, $x_c \in X_k^I$ be such that $\theta^I(x_c) < 0$, i.e.\ there exists a direction $d_c^I \in v^I(x_c)$ such that
	\begin{equation*}
		\nabla f_j(x_c)^Td^I_c < 0
	\end{equation*}
	$\forall j \in I$. Then $\exists\bar{\alpha} > 0$, sufficiently small, such that
	\begin{equation*}
		F_I(y) + \mathbf{1}\beta\bar{\alpha}\theta^I(x_c) \not < F_I(x_c + \bar{\alpha} d_c^I), \hspace{0.3cm} \forall y \in X_k^I,
	\end{equation*}
	i.e., the while loop of \texttt{FALS} terminates in a finite number $\bar{h}$ of iterations, returning a value $\bar{\alpha} = \delta^{\bar{h}}\alpha_0$. Furthermore, the produced point $x_c + \bar{\alpha}d_c^I$ is not dominated with respect to the set $X_k$.
\end{lemma}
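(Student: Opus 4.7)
The plan is to establish that the while condition in Algorithm~\ref{alg::FALS} becomes false as soon as $\alpha > 0$ is sufficiently small. Since the loop replaces $\alpha$ by $\delta\alpha$ on each iteration with $\delta \in (0,1)$, reaching any such threshold takes only a bounded number $\bar{h}$ of steps, giving $\bar{\alpha} = \delta^{\bar{h}}\alpha_0$. Spelling out the negation of the while condition, it suffices to show that for every $y \in X_k^I$ there is an index $j(y) \in I$ with
$$f_{j(y)}(y) + \beta\alpha\theta^I(x_c) \ge f_{j(y)}(x_c + \alpha d_c^I).$$
Because $X_k^I$ is finite, verifying the existence of such a threshold on $\alpha$ separately for each $y$ and then taking the minimum is enough.

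The argument then naturally splits on whether $y = x_c$ or $y \in X_k^I \setminus \{x_c\}$. For $y = x_c$, continuous differentiability yields
$$f_j(x_c + \alpha d_c^I) = f_j(x_c) + \alpha\nabla f_j(x_c)^T d_c^I + o(\alpha)$$
for every $j \in I$, and $d_c^I \in v^I(x_c)$ gives $\nabla f_j(x_c)^T d_c^I \le \theta^I(x_c)$, so the desired inequality reduces to
$$\alpha\bigl[\beta\theta^I(x_c) - \nabla f_j(x_c)^T d_c^I\bigr] \ge o(\alpha),$$
whose left-hand side is at least $\alpha(\beta-1)\theta^I(x_c) > 0$ (since $\beta \in (0,1)$ and $\theta^I(x_c) < 0$), so the inequality holds for every $j \in I$ once $\alpha$ is small enough. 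For $y \in X_k^I \setminus \{x_c\}$, mutual non-dominance rules out $F_I(y) \lneqq F_I(x_c)$: either $F_I(y) = F_I(x_c)$, and the previous argument transfers verbatim, or there exists $j^* \in I$ with $f_{j^*}(y) > f_{j^*}(x_c)$. In this latter subcase continuity ensures $f_{j^*}(x_c + \alpha d_c^I) \to f_{j^*}(x_c)$ and $\beta\alpha\theta^I(x_c) \to 0$ as $\alpha \to 0^+$, so the strictly positive gap $f_{j^*}(y) - f_{j^*}(x_c)$ eventually dominates and $j(y) = j^*$ works.

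The most delicate point is the concluding claim that $x_c + \bar{\alpha}d_c^I$ is non-dominated with respect to the whole $X_k$, since the line search only controls the subset $X_k^I$. For any $y \in X_k^I$, the terminating condition produces $j(y) \in I$ with
$$f_{j(y)}(x_c + \bar{\alpha}d_c^I) \le f_{j(y)}(y) + \beta\bar{\alpha}\theta^I(x_c) < f_{j(y)}(y),$$
so $y$ cannot dominate $x_c + \bar{\alpha}d_c^I$ even with respect to the full vector $F$. For $y \in X_k \setminus X_k^I$, finiteness of $X_k$ together with the definition of $X_k^I$ yields some $y' \in X_k^I$ with $F_I(y') \lneqq F_I(y)$, hence $f_j(y') \le f_j(y)$ for every $j \in I$. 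Chaining with the strict inequality just established for $y'$, the index $j(y')$ satisfies $f_{j(y')}(x_c + \bar{\alpha}d_c^I) < f_{j(y')}(y') \le f_{j(y')}(y)$, which again blocks dominance and completes the argument.
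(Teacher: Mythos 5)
Your proof is correct. Note that the paper does not actually prove this lemma: it is quoted verbatim from Proposition 4 of the cited reference (Cocchi et al.), and the paper's own related result (Proposition \ref{prop::fin-ter-B-FALS} for \texttt{B-FALS}) is likewise proved by deferring to that reference after disposing of the feasibility test. So the relevant comparison is with the cited proof, which—like the contradiction arguments the authors do spell out (e.g.\ in Proposition \ref{prop::conv-lim-po})—assumes the while loop never terminates, extracts a point $y\in X_k^I$ realizing the violated condition infinitely often, and passes to the limit via the mean value theorem to reach $(1-\beta)\theta^I(x_c)\ge 0$. You instead argue directly: for each fixed $y\in X_k^I$ you exhibit an explicit positive step-size threshold below which the Armijo-type condition is satisfied (splitting into $y=x_c$ or $F_I(y)=F_I(x_c)$, handled by the standard first-order expansion with $\nabla f_j(x_c)^Td_c^I\le\theta^I(x_c)$ and $\beta<1$, versus the case where some $f_{j^*}(y)>f_{j^*}(x_c)$, handled by continuity), and finiteness of $X_k^I$ gives a uniform threshold, hence termination after finitely many backtracking steps. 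This is a bit more constructive and avoids subsequence extraction, at the price of the explicit case analysis. Your treatment of the final non-dominance claim is also sound and in fact slightly stronger than stated: by producing, for every $y\in X_k$ (passing through a non-dominated $y'\in X_k^I$ when $y\notin X_k^I$), an index $j\in I$ with $f_j(x_c+\bar\alpha d_c^I)<f_j(y)$, you obtain non-dominance with respect to all objectives, which is exactly the observation the paper records separately in Remark \ref{rem::non-dom-new-po}.
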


Since we consider bound constraints in Problem \eqref{eq::mo-prob}, we need to recall the (single-point) \textit{Multi-Objective Projected Gradient} (\texttt{MOPG}) method. This latter algorithm was firstly introduced in \cite{drummond2004projected} and then developed and analyzed in \cite{fukuda2011convergence, fukuda2013inexact}. In addition, the \texttt{MOPG} main results were summarized in \cite{Fukuda2014}. The method is an extension of the \texttt{MOSD} procedure dealing with constrained problems. In particular, it deals with optimization problems characterized by a feasible closed and convex set $\Omega$. We report the \texttt{MOPG} procedure in Algorithm \ref{alg::MOPG}. 

\begin{algorithm}[h]
	\caption{Multi-Objective Projected Gradient} \label{alg::MOPG}
	Input: $F:\mathbb{R}^n\to\mathbb{R}^m$, $\Omega$ feasible closed and convex set,  $x_0\in\Omega$.\\
	$k=0$\\
	\While{$x_k$ is not Pareto-stationary}{
		Compute 
		\begin{equation*}
			z_{\Omega k} \in \argmin_{\substack{z \in \Omega \\ \| z - x_k \| \le 1}} \max_{j=1,\ldots,m} \nabla f_j(x_k)^T(z - x_k)
		\end{equation*}\\
		$d_{\Omega k} = z_{\Omega k} - x_k$\\
		$\alpha_k=\texttt{ALS}(F(\cdot),x_k,d_{\Omega k})$\\
		$x_{k+1} = x_k+\alpha_k d_{\Omega k}$\\
		$k= k+1$
	}
	\Return $x_k$
\end{algorithm}

The first difference w.r.t.\ the \texttt{MOSD} procedure is the way the direction is retrieved, since now the problem constraints have to be considered. The steepest descent direction is now defined as the solution of
\begin{equation}
	\label{eq::proj-desc}
	\min_{\substack{z \in \Omega \\ \| z - \bar{x} \| \le 1}} \max_{j=1,\ldots,m} \nabla f_j(\bar{x})^T(z - \bar{x}).
\end{equation}
As in the unconstrained case, we define the continuous function $\theta_\Omega: \mathbb{R}^n \rightarrow \mathbb{R}$ such that $\theta_\Omega(\bar{x})$ indicates the optimal value of Problem \eqref{eq::proj-desc} at $\bar{x}$. We also denote by $\mathcal{Z}_\Omega(\bar{x}) \subseteq \Omega$ the set of optimal solutions to Problem \eqref{eq::proj-desc}, and by $v_\Omega(\bar{x}) = \{z - \bar{x} \mid z \in \mathcal{Z}_\Omega(\bar{x})\} \subseteq \Omega$ the set of optimal directions. We denote these latter ones as \textit{constrained steepest common descent directions}; if a subset $I \subseteq \{1,\ldots, m\}$ is considered, they are referred to \textit{constrained steepest partial descent directions}.
If $\theta_\Omega(\bar{x}) < 0$, the point is not Pareto-stationary and we proceed to find a step size through \texttt{ALS}. As opposed to the unconstrained case, where $\alpha_0$ can be any positive real number, in the \texttt{MOPG} procedure $\alpha_0 = 1$. Since the set $\Omega$ is convex, $d$ is a feasible direction by construction and $\alpha_0 = 1$, every produced point will be feasible. 

We report here two theoretical results of the \texttt{MOPG} method.

\begin{lemma}
	\cite[Lemma 4.3]{Fukuda2014} Let $\{x_k\} \subset \mathbb{R}^n$ be a sequence generated by \texttt{MOPG}. Then, we have $\{x_k\} \subset \Omega$.
\end{lemma}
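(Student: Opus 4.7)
The plan is to proceed by a simple induction on the iteration index $k$, exploiting the convexity of $\Omega$ and the fact that the line search initialization is $\alpha_0 = 1$.

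For the base case, $x_0 \in \Omega$ by the input specification of the \texttt{MOPG} algorithm. For the inductive step, I assume $x_k \in \Omega$ and aim to show $x_{k+1} = x_k + \alpha_k d_{\Omega k} \in \Omega$. The first observation is that the point $z_{\Omega k}$ produced by the direction subproblem lies in $\Omega$, since $\Omega$ is explicitly part of the feasible set of the minimization problem defining $z_{\Omega k}$. Consequently, rewriting the update as
\begin{equation*}
x_{k+1} = x_k + \alpha_k(z_{\Omega k} - x_k) = (1-\alpha_k)\,x_k + \alpha_k\, z_{\Omega k},
\end{equation*}
it is manifestly a convex combination of $x_k$ and $z_{\Omega k}$ provided $\alpha_k \in [0,1]$.

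The remaining ingredient is therefore to verify $\alpha_k \in (0,1]$. This is where the specification stated just before the lemma is used: in \texttt{MOPG}, the line search \texttt{ALS} is invoked with $\alpha_0 = 1$, and \texttt{ALS} only ever shrinks the trial step by a factor $\delta \in (0,1)$. Hence $\alpha_k = \delta^{h_k}\cdot 1 \in (0,1]$ for the integer $h_k \ge 0$ of back-tracking steps performed. Combining this with the convexity of $\Omega$ and the inductive hypothesis $x_k \in \Omega$ together with $z_{\Omega k} \in \Omega$ yields $x_{k+1} \in \Omega$, closing the induction.

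There is essentially no real obstacle here; the argument is structural. The only subtle point worth stating explicitly is that, in contrast to the unconstrained \texttt{MOSD} procedure where $\alpha_0$ can be an arbitrary positive number, the choice $\alpha_0 = 1$ in \texttt{MOPG} is precisely what forces $\alpha_k \le 1$ and thereby turns each update into a convex combination; without this, feasibility could not be inferred purely from convexity of $\Omega$. No additional assumption on $F$ beyond continuous differentiability is needed for this statement, since the finiteness of \texttt{ALS} is not required to conclude $\alpha_k \le 1$ — even if the inner loop were to cycle, each trial iterate would remain in $\Omega$ by the same convex-combination argument.
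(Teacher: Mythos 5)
Your proof is correct and follows essentially the same reasoning the paper gives: the paper cites this result from Fukuda--Drummond and justifies it with exactly your observation, namely that $z_{\Omega k}\in\Omega$, $\Omega$ is convex, and the line search starts from $\alpha_0=1$ so $\alpha_k\in(0,1]$, making each iterate a convex combination of feasible points. Your induction merely formalizes that remark, so there is nothing to add.
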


\begin{lemma}
	\cite[Theorem 4.4]{Fukuda2014} Every accumulation point, if any, of a sequence $\{x_k\}$ generated by \texttt{MOPG} is a feasible Pareto-stationary point.
\end{lemma}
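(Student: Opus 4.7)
The plan is to argue by contradiction along the lines of the standard convergence proof for Armijo-based descent methods, suitably adapted to the multi-objective constrained setting. Let $\bar{x}$ be an accumulation point of $\{x_k\}$, so there exists a subsequence $\{x_k\}_K \to \bar{x}$. Feasibility is immediate: by the previous lemma $\{x_k\} \subset \Omega$ and $\Omega$ is closed, hence $\bar{x} \in \Omega$. Suppose, for the sake of contradiction, that $\bar{x}$ is not Pareto-stationary, i.e.\ $\theta_\Omega(\bar{x}) < 0$. By continuity of $\theta_\Omega$, there exist $\eta > 0$ and $\bar{k}$ such that $\theta_\Omega(x_k) \le -\eta < 0$ for all $k \in K$ with $k \ge \bar{k}$.

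Next, I would exploit the Armijo acceptance condition on the whole sequence. For every $k$ and every $j=1,\ldots,m$, the step satisfies
\begin{equation*}
f_j(x_{k+1}) \le f_j(x_k) + \beta \alpha_k \nabla f_j(x_k)^T d_{\Omega k} \le f_j(x_k) + \beta \alpha_k \theta_\Omega(x_k),
\end{equation*}
since $\nabla f_j(x_k)^T d_{\Omega k} \le \max_{i} \nabla f_i(x_k)^T d_{\Omega k} = \theta_\Omega(x_k) \le 0$. Hence each $\{f_j(x_k)\}$ is monotonically non-increasing; along the convergent subsequence it is bounded below by $f_j(\bar{x})$ (by continuity), so the entire sequence $\{f_j(x_k)\}$ converges. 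Summing the telescoping bound then forces $\alpha_k \theta_\Omega(x_k) \to 0$, and combined with $\theta_\Omega(x_k) \le -\eta$ on $K$ we conclude $\alpha_k \to 0$ along $K$.

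Now I would use compactness to extract a further subsequence (still denoted by $K$) such that $d_{\Omega k} \to \bar{d}$: this is legitimate because $\|d_{\Omega k}\| = \|z_{\Omega k} - x_k\| \le 1$. Using the continuity of the optimization value and the set-valued map $v_\Omega(\cdot)$, $\bar{d}$ is a constrained steepest descent direction at $\bar{x}$, so $\max_j \nabla f_j(\bar{x})^T \bar{d} = \theta_\Omega(\bar{x}) \le -\eta$. Since $\alpha_k \to 0$, for $k$ large enough the line search must have rejected the previous trial $\alpha_k/\delta$, so there exists $j_k$ with
\begin{equation*}
f_{j_k}\!\left(x_k + \tfrac{\alpha_k}{\delta} d_{\Omega k}\right) > f_{j_k}(x_k) + \beta \tfrac{\alpha_k}{\delta} \nabla f_{j_k}(x_k)^T d_{\Omega k}.
\end{equation*}
Since the index set is finite, by passing to yet another subsequence we may assume $j_k \equiv j^*$. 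Dividing both sides by $\alpha_k/\delta$, applying the mean value theorem to the left-hand side, and then letting $k \to \infty$ along the subsequence (using continuity of $\nabla F$ and $d_{\Omega k} \to \bar{d}$), we obtain $\nabla f_{j^*}(\bar{x})^T \bar{d} \ge \beta \nabla f_{j^*}(\bar{x})^T \bar{d}$, i.e.\ $(1-\beta) \nabla f_{j^*}(\bar{x})^T \bar{d} \ge 0$, hence $\nabla f_{j^*}(\bar{x})^T \bar{d} \ge 0$. This contradicts $\max_j \nabla f_j(\bar{x})^T \bar{d} \le -\eta < 0$, completing the proof.

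The main obstacle is step three: justifying that the limit $\bar{d}$ of a convergent subsequence of $\{d_{\Omega k}\}$ is itself a constrained steepest common descent direction at $\bar{x}$, i.e.\ continuity/upper semicontinuity of the set-valued map $v_\Omega$. Provided one may invoke the results of \cite{Fukuda2014} on the continuity of $\theta_\Omega$ and on the behavior of $\mathcal{Z}_\Omega(\cdot)$, everything else is routine manipulation of the Armijo inequality.
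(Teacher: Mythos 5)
The paper gives no proof of this lemma: it is quoted directly from \cite[Theorem 4.4]{Fukuda2014}, so there is no internal argument to compare against; your proof is correct and is essentially the standard convergence argument for Armijo-based multi-objective projected gradient methods used in that reference (feasibility from closedness of $\Omega$, componentwise monotone decrease via the Armijo condition and $\nabla f_j(x_k)^Td_{\Omega k}\le\theta_\Omega(x_k)$, telescoping to obtain $\alpha_k\theta_\Omega(x_k)\to 0$ and hence $\alpha_k\to 0$ on the subsequence, then the rejected trial step $\alpha_k/\delta$ plus the mean value theorem to reach a contradiction). One remark: the step you flag as ``the main obstacle'' is not actually needed. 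You never have to show that $\bar d$ is itself a constrained steepest descent direction at $\bar x$, nor invoke any (semi)continuity of the solution map $v_\Omega(\cdot)$; the only fact used in the final contradiction is $\max_{j}\nabla f_j(\bar x)^T\bar d\le-\eta$, and this follows at once by passing to the limit in $\theta_\Omega(x_k)=\max_{j}\nabla f_j(x_k)^Td_{\Omega k}\le-\eta$, using $x_k\to\bar x$, $d_{\Omega k}\to\bar d$ and continuity of the gradients. Continuity of $\theta_\Omega$ (asserted in the paper) is needed only to get $\theta_\Omega(x_k)\le-\eta$ near $\bar x$ in the first place, so with that small simplification your argument is complete.
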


In order to deal with box-constrained optimization problems, we adapted the \texttt{FSDA} algorithm \cite{cocchi2020convergence}. We call the adaptation \textit{Front Projected Gradient Algorithm} (\texttt{FPGA}) and the differences w.r.t.\ the original algorithm are the following:
\begin{itemize}
	\item the initial set $X_0$ is composed by \textit{feasible} non-dominated points w.r.t.\ $F$;
	\item the direction is found solving Problem \eqref{eq::proj-desc} instead of Problem \eqref{eq::ste-com-desc};
	\item we employ the \textit{Bound-constrained Front Armijo-Type Line Search} (\texttt{B-FALS}), which is a modified version of \texttt{FALS} that we introduced to take into account bound constraints.
\end{itemize}

\texttt{B-FALS}, which we report in Algorithm \ref{alg::B-FALS}, is similar to \texttt{FALS}: the only added requirement is that the step size must lead to a point that is feasible. 

\begin{algorithm}[h]
	\caption{Bound-constrained Front Armijo-type Line Search} \label{alg::B-FALS}
	Input: $F:\mathbb{R}^n\to \mathbb{R}^m$, $\Omega$ feasible convex set, $I \subseteq \{1,\ldots, m\}$, $X_k^I$ set of feasible mutually non-dominated points w.r.t.\ $F_I$, $x_c\in X_k^I$, $d_{\Omega c}^I \in v_\Omega^I(x_c)$, $\theta_\Omega^I(x_c) \in \mathbb{R}$, $\alpha_0>0$, $\delta\in(0,1)$, $\beta\in(0,1)$. \\
	$\alpha=\alpha_0$\\
	\While{$x_c + \alpha d_{\Omega c}^I \not \in \Omega$ \textbf{or} $\exists y \in X_k^I \text{ s.t. } F_I(y) + \mathbf{1}\beta\alpha\theta_\Omega^I(x_c) < F_I(x_c + \alpha d_{\Omega c}^I)$}{
		$\alpha=\delta\alpha$
	}
	\Return $\alpha$
\end{algorithm}

Through the following proposition, we show that \texttt{B-FALS} terminates in a finite number of iterations.

\begin{proposition}
	\label{prop::fin-ter-B-FALS}
	Let $I \subseteq \{1,\ldots, m\}$, $x_c \in X_k^I$ be such that $\theta_\Omega^I(x_c) < 0$, i.e.\ there exists a direction $d_{\Omega c}^I  \in v_\Omega^I(x_c)$ such that
	\begin{equation*}
		\nabla f_j(x_c)^Td_{\Omega c}^I < 0
	\end{equation*}
	$\forall j \in I$. Then $\exists \bar{\alpha} > 0$, sufficiently small, such that
	\begin{equation*}
		x_c + \bar{\alpha} d_{\Omega c}^I \in \Omega
	\end{equation*}
	and
	\begin{equation*}
		F_I(y) + \mathbf{1}\beta\bar{\alpha}\theta_\Omega^I(x_c) \not< F_I(x_c + \bar{\alpha} d_{\Omega c}^I) \hspace{0.15cm} \forall y \in X_k^I,
	\end{equation*}
	i.e., the while loop of \texttt{B-FALS} terminates in a finite number $\bar{h}$ of iterations, returning a value $\bar{\alpha} = \delta^{\bar{h}}\alpha_0$. Furthermore, the produced point $x_c + \bar{\alpha}d_{\Omega c}^I$ is not dominated with respect to the set $X_k$.
\end{proposition}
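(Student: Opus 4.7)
The plan is to prove the three claims in the statement in sequence: feasibility of the produced point for small $\alpha$, the Armijo-like non-domination condition for small $\alpha$, and the final assertion that the accepted point is not dominated with respect to $X_k$. The overall strategy will closely follow the argument used for \texttt{FALS} in \cite{cocchi2020convergence}, adapted to the projected-gradient setting, but the final claim will require an extra chaining step because the line search tests only against $X_k^I \subseteq X_k$, not all of $X_k$.

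First I would dispose of the feasibility requirement. Since $d_{\Omega c}^I \in v_\Omega^I(x_c)$, by the definition of Problem~\eqref{eq::proj-desc} restricted to $I$ we can write $d_{\Omega c}^I = z - x_c$ for some $z \in \Omega$. Convexity of $\Omega$ then gives $x_c + \alpha d_{\Omega c}^I = (1-\alpha)x_c + \alpha z \in \Omega$ for every $\alpha \in [0,1]$, so the feasibility clause of the \texttt{while} test is satisfied for all sufficiently small $\alpha$. For the Armijo clause I would perform a first-order Taylor expansion: for every $j \in I$,
\begin{equation*}
f_j(x_c + \alpha d_{\Omega c}^I) = f_j(x_c) + \alpha \nabla f_j(x_c)^T d_{\Omega c}^I + o(\alpha).
\end{equation*}
Fix $y \in X_k^I$. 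If $F_I(y) = F_I(x_c)$, pick any $j^\star \in I$ attaining $\theta_\Omega^I(x_c) = \nabla f_{j^\star}(x_c)^T d_{\Omega c}^I$; then $f_{j^\star}(y) + \beta\alpha\theta_\Omega^I(x_c) - f_{j^\star}(x_c + \alpha d_{\Omega c}^I) = (1-\beta)\alpha\,|\theta_\Omega^I(x_c)| - o(\alpha)$, which is nonnegative for $\alpha$ small. If instead $F_I(y) \neq F_I(x_c)$, then since $y$ and $x_c$ both lie in $X_k^I$ (mutually non-dominated w.r.t.\ $F_I$) there must exist $\tilde{j} \in I$ with $f_{\tilde{j}}(y) > f_{\tilde{j}}(x_c)$ strictly, and by continuity the inequality $f_{\tilde{j}}(y) + \beta\alpha\theta_\Omega^I(x_c) \ge f_{\tilde{j}}(x_c + \alpha d_{\Omega c}^I)$ again holds for small $\alpha$. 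Finiteness of $X_k^I$ allows picking a uniform threshold $\hat{\alpha}>0$ below which both the feasibility and the Armijo clauses fail simultaneously for every $y \in X_k^I$; since the backtracking produces the geometric sequence $\delta^h \alpha_0 \to 0$, after finitely many steps $\bar{h}$ we enter $(0,\hat{\alpha}]$ and the loop exits, yielding $\bar{\alpha} = \delta^{\bar{h}}\alpha_0$.

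The last assertion, that $x_+ := x_c + \bar{\alpha} d_{\Omega c}^I$ is not dominated with respect to the full set $X_k$, is the delicate point and will require the chaining step I alluded to above. I would argue by contradiction: suppose some $y \in X_k$ satisfies $F(y) \lneqq F(x_+)$, so in particular $F_I(y) \le F_I(x_+)$ componentwise. If $y \notin X_k^I$, then by definition of $X_k^I$ there exists $y' \in X_k$ with $F_I(y') \lneqq F_I(y)$, hence $F_I(y') \le F_I(x_+)$; iterating and using finiteness of $X_k$ produces some $y^\star \in X_k^I$ with $F_I(y^\star) \le F_I(x_+)$. Because $\theta_\Omega^I(x_c) < 0$ and $\bar{\alpha} > 0$, the vector $\mathbf{1}\beta\bar{\alpha}\theta_\Omega^I(x_c)$ is strictly negative componentwise, so
\begin{equation*}
F_I(y^\star) + \mathbf{1}\beta\bar{\alpha}\theta_\Omega^I(x_c) < F_I(y^\star) \le F_I(x_+)
\end{equation*}
componentwise, which violates the exit condition of \texttt{B-FALS} at $\bar{\alpha}$ and yields the sought contradiction. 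The main obstacle is precisely this last step, since the \texttt{while} condition only scans $X_k^I$ while the claim concerns $X_k$; the chaining argument via mutual non-domination under $F_I$ is what bridges the two and must be handled carefully.
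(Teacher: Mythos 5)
Your proof is correct, but it takes a more self-contained route than the paper's. The paper's own argument is by contradiction on an infinite backtracking sequence $\{\delta^h\alpha_0\}$: it verifies only the new ingredient --- that for $h$ large enough the point $x_c+\delta^h\alpha_0 d_{\Omega c}^I$ is feasible, by convexity of $\Omega$ and feasibility of the direction, so that the domination condition must be the clause triggering the loop --- and then defers the rest (the Armijo-type analysis and the final non-domination claim with respect to $X_k$) to the proof of Proposition 4 in \cite{cocchi2020convergence}. You instead prove everything directly: a Taylor-expansion bound yielding, uniformly over the finite set $X_k^I$ (with the case split $F_I(y)=F_I(x_c)$ versus $F_I(y)\neq F_I(x_c)$, the latter exploiting mutual non-domination in $X_k^I$), a threshold $\hat{\alpha}>0$ below which the \texttt{B-FALS} while-test fails, hence finite termination of the geometric backtracking; and an explicit chaining argument (descend through dominators inside the finite set $X_k$ until reaching a point of $X_k^I$, then exploit the strictly negative offset $\mathbf{1}\beta\bar{\alpha}\theta_\Omega^I(x_c)$) to bridge the gap between the line-search test, which scans only $X_k^I$, and the final claim, which concerns all of $X_k$. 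The paper's approach buys brevity by reusing the \texttt{FALS} result; yours buys a fully self-contained proof and, in particular, makes explicit the last assertion, which the paper inherits from the citation without spelling it out. Your treatment of feasibility ($\alpha\le 1$ plus convexity of $\Omega$) coincides with the paper's; both arguments are sound.
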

\begin{proof}
	Assume by contradiction that the thesis is false. Then the algorithm produces an infinite sequence $\{\delta^h\alpha_0\}$ such that, for all $h$, either
	\begin{equation*}
		x_c + \delta^h\alpha_0 d_{\Omega c}^I \not \in \Omega
	\end{equation*}
	or a point $y_h \in X_k^I$ exists such that 
	\begin{equation}
		\label{eq::2-B-FALS-while-condition}
		F_I(y_h) + \mathbf{1}\beta\delta^h\alpha_0\theta_\Omega^I(x_c) < F_I(x_c + \delta^h\alpha_0 d_{\Omega c}^I).
	\end{equation}
	Since $\Omega$ is convex, $d_{\Omega c}^I$ is a feasible direction by construction and $\delta^h\alpha_0\to0$ as $h\to\infty$, for $h$ sufficiently large the point $x_c+\delta^h\alpha_0d_{\Omega c}^I$ is feasible and thus Condition \eqref{eq::2-B-FALS-while-condition} holds. Then, following the proof of Proposition 4 in \cite{cocchi2020convergence}, we can prove the thesis.
\end{proof}

We provide a full description of \texttt{FPGA} in Appendix \ref{app::FPGA}, along with feasibility and convergence properties. In this work, we consider the \texttt{FPGA} algorithm as a representative of gradient-based methods designed to produce Pareto front approximations for bound-constrained optimization problems. 

In \cite{cocchi2020convergence}, a variant of the \texttt{FSDA} algorithm exploiting an Armijo-type extrapolation technique is also introduced. The authors claim that this variant outperforms the original algorithm. For this reason, in the initial stage of our work, we decided to also adapt this variant for box-constrained optimization problems. However, some preliminary computational experiments led us to the somewhat surprising conclusion that vanilla \texttt{FPGA} is better than using extrapolation. A possible justification of this result may lie in the presence of constraints. Anyhow, we decided for the sake of brevity to only consider \texttt{FPGA} in the remainder of the paper.

\subsection{\texttt{NSGA-II}}
\label{subsec::NSGA-II}

\texttt{NSGA-II} is a non-dominated sorting-based multi-objective evolutionary algorithm that was proposed in \cite{deb2002fast}. In particular,
\texttt{NSGA-II} is a genetic algorithm that creates a mating pool by combining the parent and offspring populations and selecting the best $N$ solutions. In this section, we review the main characteristics of \texttt{NSGA-II}. For a deeper understanding of the algorithm mechanisms, the reader is referred to the original work \cite{deb2002fast}.

We report the main steps of \texttt{NSGA-II} in Algorithm \ref{alg::NSGA-II}.

\begin{algorithm}[h]
	\caption{NSGA-II} \label{alg::NSGA-II}
	Input: $F:\mathbb{R}^n \rightarrow \mathbb{R}^m$, $\Omega = \{x \in \mathbb{R}^n \mid x \in [l, u]\}$ feasible set, $X_0\subset\Omega$, $N$ population size.\\
	$k=0$\\
	$\hat{X}_0 = X_0$ \\
	$\hat{R}_0, \hat{C}_0$ = \texttt{getMetrics}($\hat{X}_0$) \\
	$X_0, R_0, C_0$ = \texttt{getSurvivors}($\hat{X}_0, \hat{R}_0, \hat{C}_0, N$) \\
	\While{a stopping criterion is not satisfied}{
		$P_k$ = \texttt{getParents}($X_k, R_k, C_k$)\\
		$O_k$ = \texttt{crossover}($P_k, l, u$)\\
		$\tilde{O}_k$ = \texttt{mutation}($O_k, l, u$)\\
		$\hat{X}_{k + 1} = X_k \cup \tilde{O}_k$\\
		$\hat{R}_{k + 1}, \hat{C}_{k + 1}$ = \texttt{getMetrics}($\hat{X}_{k + 1}$) \\
		$X_{k + 1}, R_{k + 1}, C_{k + 1}$ = \texttt{getSurvivors}($\hat{X}_{k + 1}, \hat{R}_{k + 1}, \hat{C}_{k + 1}, N$) \\
		$k = k + 1$
	}
	\Return $X_k$
\end{algorithm}

\texttt{NSGA-II} deals with a fixed size population ($N$ solutions) and takes as input an initial population $X_0$. For the sake of clarity, from now on we consider $X_0$ as a set composed by $N$ feasible solutions. However, we want to remark two facts.

\begin{itemize}
	\item Starting with a population $X_0$ composed by $N$ points is not necessary: if the population is smaller/bigger, after the first iteration it is increased/reduced in order to get exactly $N$ solutions in it.
	\item \texttt{NSGA-II} can also manage unfeasible points. However, since in our work we address bound constrained problems, and the genetic operators ensure that after the first iteration no point in the population violates the bound constraints, we assume that $X_0$ is only composed by feasible points. 
\end{itemize}
The core idea of the algorithm is that during an iteration:

\begin{itemize}
	\item the parents are chosen among the current solutions;
	\item $N$ offsprings are created from the parents through the \texttt{crossover} operator;
	\item the offsprings are mutated using the \texttt{mutation} function;
	\item a new population of $2N$ solutions is created merging the current population with the offsprings;
	\item only the best $N$ points (survivors) are selected and maintained.
\end{itemize}
The \texttt{crossover} and \texttt{mutation} operators have a crucial role in the \texttt{NSGA-II} mechanisms. The \texttt{crossover} operator aim is the creation of offsprings that inherit (hopefully the best) features of the parents. The \texttt{mutation} operator introduces some random changes in the offsprings. This latter one could be useful when we want to spread our search in the objectives space as much as possible. For a more detailed and technical explanation about these two operators, we again refer the reader to \cite{deb2002fast}. We want to remark here that the \texttt{NSGA-II} mechanisms ensure that there are no duplicates among the offsprings and any offspring is not a duplicate of any point in the current population.
At the end of the algorithm execution, the current population $X_k$ is returned.

In the next subsections, we provide other details of the algorithm that are useful for our purposes.

\subsubsection{Metrics}
\label{subsubsec::getmetrics}

In this section, we explain the metrics used in the \texttt{NSGA-II} mechanisms (computed in the \texttt{getMetrics} function). In particular, these scores are used to select the parents and the survivors.

The first one is the \textit{ranking}, which leads to a splitting of the population in different domination levels. Briefly, if a point has rank $0$, it means that it is not dominated by any point in $X_k$ w.r.t.\ $F$. If it has rank $1$, it is dominated by some of the points with rank $0$, but it is not w.r.t.\ any other point with rank equal to or greater than $1$. In order to obtain the ranking values, a fast sorting approach is employed, which is one of the strength elements of the \texttt{NSGA-II} algorithm.

The second considered metric is \textit{crowding distance}. It is useful to get an estimate of the density of solutions surrounding a particular point in the population. Having a high crowding distance indicates that the point is in a poorly populated area of the objectives space, and maintaining it in the population may likely lead to a spread Pareto front. Note that for each point this metric is calculated with respect to the solutions with the same ranking value.

We again refer the reader to the original paper \cite{deb2002fast} for the rigorous definition of the metrics.

\subsubsection{Parents selection}
\label{subsubsec::getparents}

In the \texttt{getParents} function, pairs of parents are randomly chosen among the solutions in $X_k$. Then, considering a pair, only one of the two points is selected by \textit{binary tournament}.
In this latter one, the solutions are compared in the following way.

\begin{itemize}
	\item The point with the lowest rank is preferred.
	\item If the ranking value is the same for both points, the one with the highest crowding distance is chosen.
	\item In the unlikely case in which the crowding distance values are equal too, a random choice is done.
\end{itemize}
The selected point will be used with a parent chosen from another pair in the \texttt{crossover} function in order to create offsprings.

This approach of comparing the solutions is also used in the \texttt{getSurvivors} function.

\subsubsection{Selection operation}
\label{subsubsec::getsurvivors}

After getting the offsprings through the \texttt{crossover} and \texttt{mutation} operators, the new population is composed by $2N$ solutions. The aim of the \texttt{getSurvivors} function is to select and maintain the best $N$ solutions. As in the \texttt{getParents} function, the selection is based on the ranking and the crowding distance. 
\begin{itemize}
	\item The set composed by the $2N$ points is initially sorted based on the ranking.
	\item The solutions with the same rank are sorted based on the crowding distance.
	\item The first $N$ points are chosen as the best ones.
\end{itemize}
\section{Non-dominated Sorting Memetic Algorithm}
\label{sec::NSMA}

In this section, we introduce a novel memetic algorithm for bound-constrained MOO problems, which we call \textit{Non-dominated Sorting Memetic Algorithm} (\texttt{NSMA}). We first show and describe the algorithmic scheme. Then, we formally introduce the \textit{Front Multi-Objective Projected Gradient} (\texttt{FMOPG}) algorithm, which is the descent method used within the \texttt{NSMA} and for which we also provide a rigorous theoretical analysis.

\subsection{Algorithmic scheme}
\label{subsec::algorithmic-scheme}

The scheme of \texttt{NSMA} is reported in Algorithm \ref{alg::NSMA}.
Basically, the structure of the proposed algorithm is similar to that of \texttt{NSGA-II}, from which we also inherit all the genetic operators. The main differences between the two methods are constituted by three new operations: 

\begin{itemize}
	\item \texttt{getSurrogateBounds} (Line \ref{line::getSurrogateBounds});
	\item \texttt{getCrowdingDistanceThreshold} (Line \ref{line::getCrowdingDistanceThreshold});
	\item \texttt{optimizePopulation} (Line \ref{line::optimizePopulation}).
\end{itemize}

\begin{algorithm}[h]
	\caption{Non-dominated Sorting Memetic Algorithm} \label{alg::NSMA}
	Input: $F:\mathbb{R}^n \rightarrow \mathbb{R}^m$, $\Omega = \{x \in \mathbb{R}^n | x \in [l, u]\}$ feasible set, $X_0\subset \Omega$, $N$ population size, $s_h \in \mathbb{R}^+_0$, $q \in [0, 1]$, $n_{opt} \in \mathbb{N}^+$, $\{\varepsilon_t\} \subset \mathbb{R}^+_0$ decreasing sequence.\\
	$k = 0$ \\
	$t = 0$ \\
	$\hat{X}_0 = X_0$ \\
	$\hat{R}_0, \hat{C}_0$ = \texttt{getMetrics}($\hat{X}_0$) \\
	$X_0, R_0, C_0$ = \texttt{getSurvivors}($\hat{X}_0, \hat{R}_0, \hat{C}_0, N$) \\
	\While{a stopping criterion is not satisfied}{
		$P_k$ = \texttt{getParents}($X_k, R_k, C_k$)\\
		$l_k^s, u_k^s$ = \texttt{getSurrogateBounds}($X_k, l, u, s_h$) \label{line::getSurrogateBounds}\\
		$O_k$ = \texttt{crossover}($P_k, l_k^s, u_k^s$)\\
		$\tilde{O}_k$ = \texttt{mutation}($O_k, l_k^s, u_k^s$)\\
		$\hat{X}_{k + 1} = X_k \cup \tilde{O}_k$\\
		$\hat{R}_{k + 1}, \hat{C}_{k + 1}$ = \texttt{getMetrics}($\hat{X}_{k + 1}$) \\
		\label{line::getCrowdingDistanceThreshold}$\bar{c}_{k + 1}$ =  \texttt{getCrowdingDistanceThreshold}($\hat{X}_{k + 1}, \hat{R}_{k + 1}, \hat{C}_{k + 1}, q$) \\
		$X_{k + 1}, R_{k + 1}, C_{k + 1}$ = \texttt{getSurvivors}($\hat{X}_{k + 1}, \hat{R}_{k + 1}, \hat{C}_{k + 1}, N$) \\
		\If{$k \textbf{ mod } n_{opt} = 0$}{
			$X_{k + 1}, R_{k + 1}, C_{k + 1}$ = \texttt{optimizePopulation}($F(\cdot), \Omega, X_{k + 1}, R_{k + 1},$$C_{k + 1}, \bar{c}_{k + 1}, \varepsilon_t, N$)\label{line::optimizePopulation}\\
			$t = t + 1$
		}
		$k = k + 1$
	}
	\Return $X_k$
\end{algorithm}

In the next subsections, we give a detailed description of these three new functions.

\subsubsection{Estimating Surrogate Bounds}
\label{subsubsec::getsurrogatebounds}

When the addressed problem is characterized by a particularly large feasible region ($l \ll u$), the \texttt{NSGA-II} algorithm turns out to be slow at obtaining a good approximation of the Pareto front. This issue occurs because of the \texttt{crossover} and, above all, of the \texttt{mutation} operator. Random mutations over a large search area lead from the very first iterations to a population which is overly disperse and far from optimality. In such a scenario, even the effectiveness of the \texttt{crossover} operator might be compromised: some parents might have extremely bad features. As a consequence, \texttt{NSGA-II} may exhibit a performance slowdown.

In \texttt{NSMA}, this issue is solved using surrogate bounds for the \texttt{crossover} and the \texttt{mutation} operators instead of the original ones. These bounds are obtained using the \texttt{getSurrogateBounds} function, which we report in Algorithm \ref{alg::getSurrogateBounds}. The surrogate bounds are computed using the current population and a shift value $s_h$. This latter parameter is  employed to progressively enlarge the region where the population can be distributed: a greater value leads to a bigger enlargement.

\begin{algorithm}[h]
	\caption{getSurrogateBounds} \label{alg::getSurrogateBounds}
	Input: $X_k\subset\Omega$, $l, u \in \mathbb{R}^n$ lower and upper bounds, $s_h \in \mathbb{R}^+_0$.\\
	\For{$i = 1,\ldots, n$}{
		$(l_{k}^s)_i = \max\left\{l_i, \displaystyle\min_{x \in X_k}\{x_i\} - s_h\right\}$\\
		$(u_{k}^s)_i = \min\left\{u_i, \displaystyle\max_{x \in X_k}\{x_i\} + s_h\right\}$
	}
	\Return $l_k^s, u_k^s$
\end{algorithm}

Ideally, the exploration starts considering only a small portion of the feasible area, which is defined by the initial population and the shift value $s_h$. In this way, the points cannot be moved by the \texttt{crossover} and the \texttt{mutation} operators too far away in the feasible set. At each following iteration, new surrogate bounds are computed to enlarge the search space. 

After a number of iterations, it may happen that the surrogate bounds cover a bigger region than the one defined by the original bounds. In such case, the search goes on over the entire feasible set.

\subsubsection{Identifying Exploration Candidates}
\label{subsubsec::getcrowdingdistancethreshold}
Similarly as in memetic approaches for scalar optimization, performing local searches starting from each point in a population usually turns out to be inefficient. In fact, a great computational effort is required to optimize many points that in the end do not lead to good solutions. 

%

In the case of \texttt{NSMA}, one may think of only performing local searches for the rank-$0$ points. However, this idea is inefficient too: during the last iterations, most, if not all, the points are likely to be associated with a ranking value equal to $0$. Furthermore, many of these points could be in a high density area of the Pareto front and, therefore, optimizing all of them could be a waste of computational time. 

The issue is solved by choosing to optimize the rank-$0$ points associated with an high crowding distance. As already remarked in Section \ref{subsubsec::getmetrics}, such points are in a poorly populated area of the objectives space. Therefore, optimizing them, we still contribute to obtain a better approximation of the Pareto front, since they are rank-$0$ points, and, at the same time, we have the possibility to populate a low density area, leading to a better spread Pareto front.

\begin{algorithm}[h]
	\caption{getCrowdingDistanceThreshold} \label{alg::getCrowdingDistanceThreshold}
	Input: $\hat{X}_{k + 1}\subset\Omega$, $\hat{R}_{k + 1}, \hat{C}_{k + 1} \in \mathbb{R}^{|\hat{X}_{k + 1}|}$ metrics vectors, $q \in [0, 1]$.\\
	$\bar{C}_{k + 1} = \{\hat{c}_p \in \hat{C}_{k + 1} | \hat{r}_p = 0 \land \hat{c}_p < +\infty\}$\\
	\If{$\bar{C}_{k + 1} \neq \emptyset$}{
		Let $\bar{c}_{k + 1}$ be the $q$--quantile of the set $\bar{C}_{k + 1}$
	}
	\Else{
		$\bar{c}_{k + 1} = +\infty$
	}
	\Return $\bar{c}_{k + 1}$
\end{algorithm}

Through the \texttt{getCrowdingDistanceThreshold} function, which we report in Algorithm \ref{alg::getCrowdingDistanceThreshold}, we retrieve the $q$--quantile of the crowding distances of the rank-$0$ points in $\hat{X}_{k + 1}$. We denote by $\bar{c}_{k + 1}$ this quantity: only the rank-$0$ points associated with a crowding distance greater than or equal to $\bar{c}_{k + 1}$ will be optimized through the \texttt{FMOPG} algorithm. Smaller values for the parameter $q$  lead to the optimization of a greater number of points.

As stated in \cite{deb2002fast}, some points will be associated to a crowding distance equal to $+\infty$. These points are considered the extreme solutions of the Pareto front w.r.t.\ a specific objective function. For this reason, they are always used as starting solutions for local searches, since they could lead to a wider Pareto front approximation. 

\subsubsection{Local searches by multi-objective descent}
\label{subsubsec::optimizepopulation}

In the \texttt{optimizePopulation} function, which we report in Algorithm \ref{alg::optimizePopulation}, the \texttt{FMOPG} method is employed to refine the population by performing local searches. This function is the core of our memetic approach: it allows to combine the typical features of descent methods with the genetic operators of \texttt{NSGA-II}.

\begin{algorithm}[h]
	\caption{optimizePopulation} \label{alg::optimizePopulation}
	Input: $F:\mathbb{R}^n \rightarrow \mathbb{R}^m$, $\Omega$ feasible closed and convex set, $X_{k + 1}\subset\Omega$, $R_{k + 1}, C_{k + 1} \in \mathbb{R}^{|X_{k + 1}|}$ metrics vectors, $\bar{c}_{k + 1}$ crowding distance threshold, $\varepsilon_t \in \mathbb{R}^+_0$, $N$ population size.\\
	$\hat{X}_{k + 1} = X_{k + 1}$ \\
	\For{$p = 1,\ldots, |X_{k + 1}|$}{
		\For{$I \in 2^{\{1,\ldots, m\}}$}{
			\If{$x_p$ is such that 
				\begin{itemize}
					\item $r_p = 0$, $c_p \ge \bar{c}_{k + 1}$
					\item $x_p \in \hat{X}_{k + 1}^I$
					\item $\theta_\Omega^I(x_p) < 0$
				\end{itemize}}{
				$\label{line::call-FMOPG}\tilde{X}_p$ = \texttt{FMOPG}($F(\cdot), \Omega, I, \hat{X}_{k + 1}^I, x_p, \varepsilon_t$)\\
				$\hat{X}_{k + 1} = \hat{X}_{k + 1} \cup \tilde{X}_p$
			}
		}
	}
	$\hat{R}_{k + 1}, \hat{C}_{k + 1}$ = \texttt{getMetrics}($\hat{X}_{k + 1}$) \\
	$X_{k + 1}, R_{k + 1}, C_{k + 1}$ = \texttt{getSurvivors}($\hat{X}_{k + 1}, \hat{R}_{k + 1}, \hat{C}_{k + 1}, N$) \\
	\Return $X_{k + 1}, R_{k + 1}, C_{k + 1}$
\end{algorithm}

In order to be optimized through \texttt{FMOPG} w.r.t.\ a subset of indices of objectives $I \subseteq \{1,\ldots, m\}$, a point $x_p$ must satisfy the following conditions:
\begin{itemize}
	\item Its rank must be 0 and its crowding distance must be greater than or equal to $\bar{c}_{k + 1}$ (these requirements are already discussed in Section \ref{subsubsec::getcrowdingdistancethreshold}).
	\item It must belong to $\hat{X}_{k + 1}^I$, which is the set of mutually non-dominated points w.r.t.\ $F_I$ contained in $\hat{X}_{k + 1}$ (a formal definition of this set can be found in Section \ref{subsec::descent-methods}). Trying to optimize points which are not contained in $\hat{X}_{k + 1}^I$ could be useless, since we have no guarantee to reach a non-dominated point w.r.t.\ $F_I$.
	\item It must not be Pareto-stationary w.r.t.\ $F_I$.
\end{itemize}
If the point satisfies all these requirements, it is used as starting solution in the \texttt{FMOPG} algorithm. Along with it, the set $\hat{X}_{k + 1}^I$ is used as input for the algorithm. \texttt{FMOPG} returns the set of produced solutions, which are collected in the set $\tilde{X}_p$ and inserted in the set $\hat{X}_{k + 1}$. 

Lastly, the new population $\hat{X}_{k + 1}$ is reduced in order to have exactly $N$ survivors. This last operation is performed through the \texttt{getMetrics} (Section \ref{subsubsec::getmetrics}) and the \texttt{getSurvivors} (Section \ref{subsubsec::getsurvivors}) functions of the \texttt{NSGA-II} algorithm.

\subsection{The Front Multi-Objective Projected Gradient algorithm}
\label{subsec::FMOPG}

The \textit{Front Multi-Objective Projected Gradient} (\texttt{FMOPG}) algorithm is the descent method used in our memetic approach. In particular, it is a variant of the \texttt{MOPG} method (Algorithm \ref{alg::MOPG}).

\subsubsection{Algorithmic scheme}
\label{subsubsec:algorithmic-scheme}

We report the scheme of \texttt{FMOPG} in Algorithm \ref{alg::FMOPG}.

\begin{algorithm}[h]
	\caption{Front Multi-Objective Projected Gradient} \label{alg::FMOPG}
	Input: $F: \mathbb{R}^n \rightarrow \mathbb{R}^m$, $\Omega$ feasible closed and convex set, $I \subseteq \{1,\ldots, m\}$, $X_0\subset\Omega$, $x_0 \in X_0$.\\
	$k = 0$ \\
	\While{$x_k$ is not Pareto-stationary w.r.t.\ $F_I$}{
		Compute 
		\begin{equation*}
			\theta_\Omega^I(x_k) = \min_{\substack{z \in \Omega \\ \| z - x_k \| \le 1}} \max_{j \in I} \nabla f_j(x_k)^T(z - x_k)
		\end{equation*}\\
		Let $d_{\Omega k}^I \in v_\Omega^I(x_k)$ be the direction associated with $\theta_\Omega^I(x_k)$ \\
		$\alpha_k$ = \texttt{B-FALS}($F(\cdot), \Omega, I, X_k^I, x_k, d_{\Omega k}^I, \theta_\Omega^I(x_k)$) \\
		$x_{k + 1} = x_k + \alpha_k d_{\Omega k}^I$ \\
		$X_{k + 1} = X_k \cup \{x_{k + 1}\}$ \\
		$k = k + 1$
	}
	\Return sequence $\{x_k\}$
\end{algorithm}

The main difference between \texttt{FMOPG} and \texttt{MOPG} is the following: while in the original algorithm the current point $x_k$ is only optimized w.r.t.\ itself, in \texttt{FMOPG} it is also w.r.t.\ the set of points in which it is contained. 
At each iteration, the constrained steepest descent direction $d_{\Omega k}^I$ at the current point $x_k$ w.r.t.\ the subset of indices of the objectives $I$ is found. Then, a step size $\alpha_k$ is calculated by the \texttt{B-FALS} procedure (Algorithm \ref{alg::B-FALS}). Given the direction and the step size, a new point $x_{k + 1}$ is obtained. This latter one is inserted in the set $X_k$, leading to a new set $X_{k + 1}$.

The \texttt{FMOPG} algorithm iterates until the current solution $x_k$ is Pareto-stationary w.r.t.\ $F_I$. At the end, the method returns the sequence of points $\{x_k\}$ generated during the iterations. Indeed, considering the stopping conditions of \texttt{B-FALS}, we have no guarantee that, for all $k$, the point $x_{k + 1}$ dominates $x_k$ w.r.t.\ $F_I$. So, every point produced by \texttt{FMOPG} could be useful to obtain good and spread Pareto front approximations.

Finally, note that the \texttt{FMOPG} algorithm is called by the \texttt{optimizePopulation} function with an additional parameter $\varepsilon_t$ (Line \ref{line::call-FMOPG} of Algorithm \ref{alg::optimizePopulation}). In fact, \texttt{FMOPG} is executed using $\varepsilon$-Pareto-stationarity as stopping condition. In \texttt{NSMA} (Algorithm \ref{alg::NSMA}), we consider a decreasing sequence $\{\varepsilon_t\} \subset \mathbb{R}_0^+$. So, during the iterations, we get closer and closer to the Pareto-stationarity.

\subsubsection{Algorithm analysis}
\label{subsubsec::algorithm-analysis}

In this section, we provide a rigorous analysis of the \texttt{FMOPG} algorithm from a theoretical perspective.
Before proceeding, we need to state an assumption.

\begin{assumption}
	\label{ass::non-dom-x0}
	Let $I \subseteq \{1,\ldots, m\}$, $X_0\subset\Omega$ be a set of feasible points and $x_0 \in X_0$. There does not exist a point $y_0 \in X_0$ that dominates $x_0$ w.r.t.\ $F_I$, i.e.\ $x_0 \in X_0^I$.
\end{assumption}

This assumption is reasonable since a point $x_p$ to be optimized through \texttt{FMOPG} must be non-dominated w.r.t.\ $F_I$ (Section \ref{subsubsec::optimizepopulation}).

We begin characterizing the points produced by the \texttt{FMOPG} algorithm.

\begin{proposition}
	\label{prop::non-dom-new-po}
	Consider a generic iteration $k$ of \texttt{FMOPG}. Let $I \subseteq \{1,\ldots, m\}$, $X_k$ be a set of feasible points and $x_k \in X_k$. Assume that $x_k$ is not dominated by any point in $X_k$ w.r.t.\ $F_I$. Then, \texttt{B-FALS} returns a step size $\alpha_k > 0$ such that the point $x_{k+1} = x_k+\alpha_kd_{\Omega k}^I$ is feasible and not dominated by any point in $X_{k+1}$ w.r.t.\ $F_I$.
\end{proposition}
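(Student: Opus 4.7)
The plan is to invoke Proposition \ref{prop::fin-ter-B-FALS} to secure a positive step size together with feasibility, and then rule out potential dominators of $x_{k+1}$ by an argument that first addresses points in $X_k^I$ and then extends to the whole of $X_k$ by transitivity of the dominance relation $\lneqq$.

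First, observe that \texttt{FMOPG} calls \texttt{B-FALS} only when $x_k$ fails to be Pareto-stationary w.r.t.\ $F_I$, so $\theta_\Omega^I(x_k) < 0$; moreover, the hypothesis that no point of $X_k$ dominates $x_k$ w.r.t.\ $F_I$ is exactly $x_k \in X_k^I$. The assumptions of Proposition \ref{prop::fin-ter-B-FALS} are therefore satisfied, and \texttt{B-FALS} returns some $\alpha_k = \delta^{\bar h}\alpha_0 > 0$ with $x_{k+1} = x_k + \alpha_k d_{\Omega k}^I \in \Omega$ (delivering feasibility) and with the stopping condition $F_I(y) + \mathbf{1}\beta\alpha_k\theta_\Omega^I(x_k) \not< F_I(x_{k+1})$ for every $y \in X_k^I$.

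Next I would prove that no $y \in X_k^I$ dominates $x_{k+1}$ w.r.t.\ $F_I$. Suppose for contradiction that $F_I(y) \lneqq F_I(x_{k+1})$ for some such $y$. Then $F_I(y) \le F_I(x_{k+1})$ componentwise. Because $\theta_\Omega^I(x_k) < 0$ and $\alpha_k,\beta > 0$, each component of $\mathbf{1}\beta\alpha_k\theta_\Omega^I(x_k)$ is strictly negative, so
\[
F_I(y) + \mathbf{1}\beta\alpha_k\theta_\Omega^I(x_k) < F_I(y) \le F_I(x_{k+1})
\]
holds strictly in every component, contradicting the stopping condition just recorded. To extend the conclusion to all of $X_k$, pick any $y \in X_k \setminus X_k^I$; by definition of $X_k^I$ there is some $z \in X_k^I$ with $F_I(z) \lneqq F_I(y)$. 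Were $y$ to dominate $x_{k+1}$ w.r.t.\ $F_I$, transitivity of $\lneqq$ would yield $F_I(z) \lneqq F_I(x_{k+1})$, contradicting what was just established for points in $X_k^I$. Since $X_{k+1} = X_k \cup \{x_{k+1}\}$ and $x_{k+1}$ trivially does not dominate itself, the non-domination claim follows.

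The main obstacle is the second paragraph: one must leverage the \emph{strict} negativity of $\theta_\Omega^I(x_k)$ to upgrade a hypothetical weak componentwise domination of $x_{k+1}$ by $y$ into the strict componentwise inequality that the \texttt{B-FALS} stopping condition explicitly prevents. The rest of the proof is routine unpacking of the definition of $X_k^I$ and transitivity.
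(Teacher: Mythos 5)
Your proof is correct and follows the same skeleton as the paper's: both hinge on Proposition \ref{prop::fin-ter-B-FALS} applied at $x_k \in X_k^I$ with $\theta_\Omega^I(x_k)<0$ (guaranteed by the while-loop test of \texttt{FMOPG}), followed by the observation that $X_{k+1}=X_k\cup\{x_{k+1}\}$. The difference is one of economy: the paper simply invokes the full conclusion of Proposition \ref{prop::fin-ter-B-FALS}, whose ``furthermore'' clause already asserts that the produced point is not dominated with respect to the whole set $X_k$, whereas you only extract feasibility and the loop-exit inequality and then re-derive that clause by hand (your strict-negativity argument is exactly the one underlying Proposition 4 of the cited reference, and it is sound --- indeed it only needs $F_I(y)\le F_I(x_{k+1})$). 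The one place where your wording is loose is the extension step: ``by definition of $X_k^I$ there is some $z\in X_k^I$ with $F_I(z)\lneqq F_I(y)$'' is not literally what the definition gives --- it only provides a dominator $z'\in X_k$, and upgrading it to a dominator lying in $X_k^I$ requires an extra (standard) argument, e.g.\ taking a minimal element among the dominators of $y$, which is legitimate here because $X_k$ is finite ($X_0$ is finite and \texttt{FMOPG} adds one point per iteration). Either add that one line or, more simply, cite the non-domination clause of Proposition \ref{prop::fin-ter-B-FALS} directly, as the paper does.
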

\begin{proof}
	The \texttt{B-FALS} algorithm is performed from $x_k \in X_k^I$, with $\theta_\Omega^I(x_k) < 0$, along a constrained steepest descent direction $d_{\Omega k}^I$. Then, from Proposition \ref{prop::fin-ter-B-FALS}, \texttt{B-FALS} terminates in a finite number of steps and returns a step size $\alpha_k > 0$ such that the point $x_{k + 1} = x_k + \alpha_kd_{\Omega k}^I$ has the following properties: 
		\begin{itemize}
			\item $x_{k + 1} \in \Omega$;
			\item $x_{k + 1}$ is not dominated by any other point in $X_k$ w.r.t. $F_I$.
		\end{itemize}
	Since $X_{k+1} = X_{k} \cup \{x_{k+1}\}$, the assertion is finally proved.
\end{proof}

\begin{remark}
	\label{rem::non-dom-new-po}
	Since the point $x_{k + 1}$ induced by the step size produced by \texttt{B-FALS} is not dominated by any point in $X_{k + 1}$ w.r.t.\ $F_I$, we can easily conclude that the new point is also not dominated w.r.t.\ all the objectives.
\end{remark}

Given Proposition \ref{prop::non-dom-new-po}, we can state the following corollary.

\begin{corollary}
	\label{cor::non-dom-new-po}
	Let Assumption \ref{ass::non-dom-x0} hold with $I \subseteq \{1,\ldots, m\}$, the set $X_0$ and the point $x_0$. Then, the sequence of sets $\{X_k\}$ and the sequence of points $\{x_k\}$ generated by \texttt{FMOPG} are such that for all $k=0,1,\ldots$, $x_{k}$ is feasible and not dominated by any point in $X_k$ w.r.t.\ $F_I$.
\end{corollary}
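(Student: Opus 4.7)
The statement is essentially the iterative closure of Proposition \ref{prop::non-dom-new-po}, so the natural route is a simple induction on the iteration counter $k$, with Assumption \ref{ass::non-dom-x0} providing the base case and Proposition \ref{prop::non-dom-new-po} supplying the inductive step.

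For the base case $k=0$, I would argue as follows. By Assumption \ref{ass::non-dom-x0}, $x_0 \in X_0^I$, which by definition means that no point of $X_0$ dominates $x_0$ with respect to $F_I$. Feasibility of $x_0$ is immediate since $X_0 \subset \Omega$ by the hypothesis on the input of \texttt{FMOPG}.

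For the inductive step, suppose that $x_k \in \Omega$ and $x_k$ is not dominated by any point in $X_k$ w.r.t.\ $F_I$, i.e.\ $x_k \in X_k^I$. If $x_k$ is Pareto-stationary w.r.t.\ $F_I$, \texttt{FMOPG} terminates and there is nothing to prove for later indices. Otherwise, $\theta_\Omega^I(x_k) < 0$, so the hypotheses of Proposition \ref{prop::non-dom-new-po} are met. That proposition yields that \texttt{B-FALS} returns a step size $\alpha_k > 0$ such that the next iterate $x_{k+1} = x_k + \alpha_k d_{\Omega k}^I$ belongs to $\Omega$ and is not dominated by any element of $X_{k+1} = X_k \cup \{x_{k+1}\}$ with respect to $F_I$. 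This closes the induction.

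There is essentially no obstacle here beyond correctly chaining the two ingredients: the key work has already been done in Proposition \ref{prop::non-dom-new-po} (whose proof in turn relies on Proposition \ref{prop::fin-ter-B-FALS} for finite termination of \texttt{B-FALS}). The only subtlety worth flagging is that the inductive hypothesis ``$x_k \in X_k^I$'' is exactly the precondition required to invoke Proposition \ref{prop::non-dom-new-po} at iteration $k$, so the induction self-propagates cleanly; no additional assumption on later iterates is needed.
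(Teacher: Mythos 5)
Your proof is correct and follows essentially the same route as the paper: induction on $k$, with Assumption \ref{ass::non-dom-x0} giving the base case and Proposition \ref{prop::non-dom-new-po} providing the inductive step. Your added remarks on feasibility of $x_0$ and on termination when $x_k$ is Pareto-stationary are harmless elaborations of the same argument.
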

\begin{proof}
	The assertion straightforwardly follows if the assumptions of Proposition \ref{prop::non-dom-new-po} are satisfied at every iteration $k$ of the algorithm. 
	
	When $k=0$, this is guaranteed by Assumption \ref{ass::non-dom-x0}. The case of a generic iteration $k$ simply follows by induction from Proposition \ref{prop::non-dom-new-po} itself.
\end{proof}

Before proceeding with the convergence analysis, we make a further reasonable assumption. This hypothesis is similar to Assumption 1 in \cite{cocchi2020convergence}. However, in this context, Assumption \ref{ass::non-dom-x0} and bound constraints must be also taken into account.

\begin{assumption}
	\label{ass::par-stat-bound}
	Assumption \ref{ass::non-dom-x0} holds and $x_0$ is also such that:
	\begin{itemize}
		\item $x_0$ is not Pareto-stationary w.r.t.\ $F_I$;
		\item the set $\mathcal{L}(x_0) = \bigcup_{j = 1}^{m} \{x \in \Omega: f_j(x) \le f_j(x_0)\}$ is compact.
	\end{itemize}
\end{assumption}

This assumption is stronger than the one required to prove convergence of the \texttt{MOSD} method (Lemma \ref{lem::conv-MOSD}). However, as also observed in \cite{cocchi2020convergence} for \texttt{FALS} (Algorithm \ref{alg::FALS}), this is reasonable since the second stopping criterion of \texttt{B-FALS} is weaker than the one used in \texttt{ALS} (Algorithm \ref{alg::ALS}).

\begin{proposition}
	\label{prop::conv-lim-po}
	Let Assumption \ref{ass::par-stat-bound} hold with $I \subseteq \{1,\ldots, m\}$, the set $X_0$ and the point $x_0$. Let $\{x_k\}$ be the sequence of points generated by \texttt{FMOPG}. Then $\{x_k\}$ admits limit points and every limit point is Pareto-stationary considering the objectives $f_j$, with $j \in I$.
\end{proposition}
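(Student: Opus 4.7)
The plan is to mimic the classical convergence argument for steepest-descent-type methods in the multi-objective setting, while carefully exploiting the non-domination properties guaranteed by Corollary 1 to cope with the weak sufficient decrease enforced by \texttt{B-FALS}.

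First I would establish boundedness to secure the existence of limit points. Since $X_{k+1}=X_k\cup\{x_{k+1}\}$, the initial point $x_0$ belongs to $X_k$ for every $k\ge 0$, so Corollary \ref{cor::non-dom-new-po} tells us that $x_k$ is not dominated by $x_0$ w.r.t.\ $F_I$. This forces the existence of some $j\in I$ with $f_j(x_k)\le f_j(x_0)$, hence $x_k\in\mathcal{L}(x_0)$ for all $k$. Compactness of $\mathcal{L}(x_0)$ from Assumption \ref{ass::par-stat-bound} together with Bolzano--Weierstrass then yields at least one limit point.

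Second, to show stationarity of every limit point $\bar{x}$, I would argue by contradiction, assuming $\theta_\Omega^I(\bar{x})<0$. Along a subsequence $\{x_k\}_K\to\bar{x}$ I would extract further subsequences so that $d_{\Omega k}^I\to\bar{d}$ (possible because $\|d_{\Omega k}^I\|\le 1$) and $\alpha_k\to\alpha^*\in[0,1]$. By standard continuity of the value function of the max-of-linear problem \eqref{eq::proj-desc}, $\theta_\Omega^I(x_k)\to\theta_\Omega^I(\bar{x})<0$ and $\max_{j\in I}\nabla f_j(\bar{x})^T\bar{d}=\theta_\Omega^I(\bar{x})<0$.

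The cleanest sub-case is $\alpha^*=0$. For $k$ large the trial $\alpha_k/\delta$ was rejected by \texttt{B-FALS}; since $\alpha_k/\delta\to 0$ and $\bar{x}\in\Omega$, convexity of $\Omega$ and $d_{\Omega k}^I=z_k-x_k$ with $z_k\in\Omega$ give feasibility of $x_k+(\alpha_k/\delta)d_{\Omega k}^I$ for $k$ large, so rejection comes from the sufficient decrease clause: there exists $\tilde{y}_k\in X_k^I$ with $F_I(\tilde{y}_k)+\mathbf{1}\beta(\alpha_k/\delta)\theta_\Omega^I(x_k)<F_I(x_k+(\alpha_k/\delta)d_{\Omega k}^I)$. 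Both $\tilde{y}_k$ and $x_k$ are in $X_k^I$ (Corollary \ref{cor::non-dom-new-po}), hence mutually non-dominated, so there exists $j'\in I$ with $f_{j'}(\tilde{y}_k)\ge f_{j'}(x_k)$. For that $j'$ one obtains $f_{j'}(x_k+(\alpha_k/\delta)d_{\Omega k}^I)-f_{j'}(x_k)>\beta(\alpha_k/\delta)\theta_\Omega^I(x_k)$; dividing by $\alpha_k/\delta$, using pigeonhole on $j'$ to fix a constant index $\hat{j}\in I$ along a further subsequence, and letting $k\to\infty$, continuous differentiability of $F$ yields $\nabla f_{\hat{j}}(\bar{x})^T\bar{d}\ge\beta\theta_\Omega^I(\bar{x})$. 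Combined with $\nabla f_{\hat{j}}(\bar{x})^T\bar{d}\le\theta_\Omega^I(\bar{x})$ this gives $(1-\beta)\theta_\Omega^I(\bar{x})\ge 0$, contradicting $\theta_\Omega^I(\bar{x})<0$.

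The delicate sub-case is $\alpha^*>0$, which I expect to be the main obstacle. Here $\alpha_k$ takes discrete values $\delta^h\alpha_0$ so, along a further subsequence, $\alpha_k=\alpha^*$ is constant. Applying the stopping condition of \texttt{B-FALS} with $y=x_k\in X_k^I$ and pigeonholing the resulting index $j_k$ to a constant $\bar{j}\in I$, one deduces $f_{\bar{j}}(x_{k+1})\le f_{\bar{j}}(x_k)+\beta\alpha^*\theta_\Omega^I(x_k)$; passing to the limit, $\bar{x}':=\bar{x}+\alpha^*\bar{d}$ is itself a limit point of $\{x_k\}$ with $f_{\bar{j}}(\bar{x}')<f_{\bar{j}}(\bar{x})$. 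To close the argument, I would transport the analysis to the set of limit points, which is closed and contained in the compact $\mathcal{L}(x_0)$, pick a limit point $\bar{x}^\star$ minimizing $f_{\bar{j}}$ over that set, and iterate the above construction to produce a strictly better limit point, contradicting the minimality. This step is the technical heart of the argument because the \texttt{B-FALS} condition guarantees decrease only in one, possibly varying, component; the contradiction must therefore be extracted via finite pigeonholing over $I$ combined with the boundedness of each $f_j$ on $\mathcal{L}(x_0)$, exactly in the spirit of the linked-sequence analysis of \cite[Proposition 5]{cocchi2020convergence}.
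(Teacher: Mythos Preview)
Your boundedness argument and the rejected-step analysis in the sub-case $\alpha^*=0$ are correct and essentially coincide with the second half of the paper's proof. The difficulty you flag in the sub-case $\alpha^*>0$ is real, and the sketch you give does not close it. From the assumption that $\bar{x}$ is non-stationary you manufacture a second limit point $\bar{x}'$ with $f_{\bar{j}}(\bar{x}')<f_{\bar{j}}(\bar{x})$, and then propose to pick a limit point $\bar{x}^\star$ minimizing $f_{\bar{j}}$ over the whole limit set and ``iterate the construction''. But that construction requires the starting limit point to be non-stationary, and you have no such information about $\bar{x}^\star$; if $\bar{x}^\star$ happens to be stationary the argument stalls. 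Even if it is not, the pigeonholing along the subsequence converging to $\bar{x}^\star$ may single out a different index $\bar{j}'\neq\bar{j}$, and the associated step-size limit may now be $0$, sending you back to the other sub-case. The recursion never produces a contradiction.

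The paper avoids the case split on $\alpha^*$ altogether. It first proves that $\alpha_k\theta_\Omega^I(x_k)\to 0$ along $K$ (so that $\alpha_k\to 0$ since $\theta_\Omega^I(x_k)\le-\bar{\varepsilon}$), arguing as follows. Suppose $\alpha_k\theta_\Omega^I(x_k)\le-\bar{\eta}$ on some $\bar{K}\subseteq K$. For $k\in\bar{K}$, let $\bar{k}(k)$ be the next index of $\bar{K}$; since the sets are nested one has $x_k\in X_{\bar{k}(k)-1}$, and the \emph{acceptance} condition of \texttt{B-FALS} at that step gives $F_I(x_k)-\mathbf{1}\beta\bar{\eta}\not<F_I(x_{\bar{k}(k)})$, hence $f_{\hat{j}}(x_k)-\beta\bar{\eta}\ge f_{\hat{j}}(x_{\bar{k}(k)})$ after pigeonholing on the index. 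The crucial point is that \emph{both} $x_k$ and $x_{\bar{k}(k)}$ belong to $\bar{K}\subseteq K$ and therefore converge to the \emph{same} point $\bar{x}$; passing to the limit yields $-\beta\bar{\eta}\ge 0$, an immediate contradiction. The device is to compare two terms of the same convergent subsequence through the monotone inclusion $X_k\subset X_{k+1}$; this is exactly the linked-sequence mechanism of \cite[Proposition~5]{cocchi2020convergence} you mention, and it replaces your limit-set minimization entirely. Once $\alpha_k\to 0$ is secured, only your (correct) $\alpha^*=0$ argument is needed.
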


\begin{proof}
	Firstly, we prove that the sequence $\{x_k\}$ admits limit points.  
	Since $x_0\in X_k$ for all $k$, Corollary \ref{cor::non-dom-new-po} guarantee that, for each $k$, $x_k \in \Omega$ and there exists an index $j(x_k) \in I$ such that 
	\begin{equation*}
		f_{j(x_k)}(x_k) \le f_{j(x_k)}(x_0).
	\end{equation*}
	So, 
	\begin{equation*}
		x_k \in \{x \in \Omega: f_{j(x_k)}(x) \le f_{j(x_k)}(x_0)\}
	\end{equation*}
	and, therefore, 
	\begin{equation*}
		x_k \in \mathcal{L}(x_0), \hspace{0.3cm} \forall k.
	\end{equation*}
	Assumption \ref{ass::par-stat-bound} assures that the sequence $\{x_k\}$ is bounded. Hence, this latter one admits limit points: we can consider a subsequence $K \subseteq \{1, 2,\ldots\}$ such that
	\begin{equation*}
		\lim_{\substack{k \rightarrow \infty \\ k \in K}} x_k = \bar{x}.
	\end{equation*}
	
	We recall that $\bar{x}$ is Pareto-stationary w.r.t.\ $F_I$ if and only if $\theta_\Omega^I(\bar{x}) = 0$. By contradiction, we assume that $\bar{x}$ is not Pareto-stationary w.r.t.\ $F_I$: there exists $\bar{\varepsilon} > 0$ such that
	\begin{equation}
		\label{eq::theta-not-0}
		\theta_\Omega^I(x_k) \le -\bar{\varepsilon} < 0, \hspace{0.3cm} \forall k \in K.
	\end{equation} 
	Next, we want to prove the following statement:
	\begin{equation}
		\label{eq::lim-alpha-to-0}
		\lim_{\substack{k \rightarrow \infty \\ k \in K}} \alpha_k\theta_\Omega^I(x_k) = 0.
	\end{equation}
	Again, by contradiction, we assume that the assertion is not true: there exist a subsequence $\bar{K} \subseteq K$ and $\bar{\eta} > 0$ such that
	\begin{equation}
		\label{eq::alpha-not-0}
		\alpha_k\theta_\Omega^I(x_k) \le -\bar{\eta} < 0, \hspace{0.3cm} \forall k \in \bar{K}.
	\end{equation}
	Recalling Proposition \ref{prop::fin-ter-B-FALS} and Corollary \ref{cor::non-dom-new-po}, for all $k \in \bar{K}$, \texttt{B-FALS} returns in a finite number of iterations a step size $\alpha_k$ such that $x_{k + 1} = x_k+\alpha_kd_{\Omega k}^I$ satisfies
	\begin{equation*}
		F_I(y_k) + \mathbf{1}\beta\alpha_k\theta_\Omega^I(x_k) \not < F_I(x_{k+1}),
	\end{equation*}
	for all $y_k \in X_k$.
	By using Equation \eqref{eq::alpha-not-0}, we obtain that, for all $k \in \bar{K}$ and for all $y_k \in X_k$, 
	\begin{equation}
		F_I(y_k) - \mathbf{1}\beta\bar{\eta} \not < F_I(x_{k+1}).
	\end{equation}
	Let $\bar{k}(k)$ be the smallest index such that $\bar{k}(k) \in \bar{K}$, $\bar{k}(k) > k$ and, for all $y_{\bar{k}(k) - 1} \in X_{\bar{k}(k) - 1}$,
	\begin{equation}
		\label{eq::non-dom-bar-k-y}
		F_I(y_{\bar{k}(k) - 1}) - \mathbf{1}\beta\bar{\eta} \not < F_I(x_{\bar{k}(k)}).
	\end{equation}
	Now, there are two cases: $k = \bar{k}(k) - 1$ or $k < \bar{k}(k) - 1$. In the first case, $x_k \in X_{\bar{k}(k) - 1}$ and it satisfies Equation \eqref{eq::non-dom-bar-k-y}. In the second case, the statement is still satisfied since 
	\begin{equation*}
		X_{\bar{k}(k) - 1} = X_0 \cup \{x_1\} \cup\ldots\cup\{x_k\}\cup\ldots\{x_{\bar{k}(k) - 1}\}.
	\end{equation*}
	So, we have that
	\begin{equation}
		\label{eq::non-dom-bar-k-x}
		F_I(x_k) - \mathbf{1}\beta\bar{\eta} \not < F_I(x_{\bar{k}(k)}).
	\end{equation}
	Equation \eqref{eq::non-dom-bar-k-x} means that there exists an index $j(x_k) \in I$ such that
	\begin{equation*}
		f_{j(x_k)}(x_k) - \beta\bar{\eta} \ge f_{j(x_k)}(x_{\bar{k}(k)}).
	\end{equation*}
	Since the set $I$ is finite, we can consider a subsequence $\tilde{K} \subseteq \bar{K}$ such that, for all $k \in \tilde{K}$, $j(x_k) = \hat{j}$. Employing the same procedure used for $\bar{k}(k)$, let $\tilde{k}(k)$ be the smallest index such that $\tilde{k}(k) \in \tilde{K}$, $\tilde{k}(k) > k$ and
	\begin{equation*}
		f_{\hat{j}}(x_k) - \beta\bar{\eta} \ge f_{\hat{j}}(x_{\tilde{k}(k)}).
	\end{equation*}
	The definitions of $\tilde{K}$ and $\tilde{k}(k)$ imply that
	\begin{equation*}
		\lim_{\substack{k \rightarrow \infty\\k \in \tilde{K}}}x_k = \bar{x}$$ and $$\lim_{\substack{k \rightarrow \infty\\k \in \tilde{K}}}x_{\tilde{k}(k)} = \bar{x}.
	\end{equation*}
	Then, by taking the limits for $k \rightarrow \infty$ and recalling the continuity of $F$, we have that
	\begin{equation*}
		\lim_{\substack{k \rightarrow \infty\\k \in \tilde{K}}} \left[f_{\hat{j}}(x_{\tilde{k}(k)}) - f_{\hat{j}}(x_k)\right] = 0
	\end{equation*}
	and, therefore, 
	\begin{equation*}
		-\beta\bar{\eta} \ge 0.
	\end{equation*}
	Since $\beta\bar{\eta} > 0$, we get the contradiction.
	
	Therefore, Equation \eqref{eq::lim-alpha-to-0} holds and, recalling Equation \eqref{eq::theta-not-0}, we obtain the following statement:
	\begin{equation*}
		\lim_{\substack{k \rightarrow \infty \\ k \in K}} \alpha_k = 0.
	\end{equation*}
	Given this limit, we can consider sufficiently large values for $k \in K$ such that
	\begin{equation}
		\label{eq::gre-k-sma-alpha}
		\alpha_k < \frac{\alpha_k}{\delta} \le 1.
	\end{equation}
	Since $\Omega$ is convex and $d_{\Omega k}^I$ is a feasible direction by construction, Equation \eqref{eq::gre-k-sma-alpha} implies that the point $x_k + (\alpha_k / \delta) d_{\Omega k}^I \in \Omega$. Therefore, for sufficiently large values for $k \in K$, the stopping conditions of \texttt{B-FALS} imply that there exists a point $y_k \in X_k$ such that 
	\begin{equation}
		\label{eq::dom-prev-alpha}
		F_I(y_k) + \mathbf{1}\beta\frac{\alpha_k}{\delta}\theta_\Omega^I(x_k) < F_I\left(x_k + \frac{\alpha_k}{\delta}d_{\Omega k}^I\right).
	\end{equation}
	Considering Equation \eqref{eq::dom-prev-alpha} and Corollary \ref{cor::non-dom-new-po}, we have that an index $j(x_k) \in I$ exists such that the following statement is true: 
	\begin{equation*}
		f_{j(x_k)}(x_k) + \beta\frac{\alpha_k}{\delta}\theta_\Omega^I(x_k) \le f_{j(x_k)}(y_k) + \beta\frac{\alpha_k}{\delta}\theta_\Omega^I(x_k)
	\end{equation*}
	and
	\begin{equation*}
		f_{j(x_k)}(y_k) + \beta\frac{\alpha_k}{\delta}\theta_\Omega^I(x_k) < f_{j(x_k)}\left(x_k + \frac{\alpha_k}{\delta}d_{\Omega k}^I\right).
	\end{equation*}
	Since the set $I$ is finite, we can consider a subsequence $\bar{K} \subseteq K$ such that, for sufficiently large values for $k \in \bar{K}$, $j(x_k) = \hat{j}$ and 
	\begin{equation*}
		f_{\hat{j}}\left(x_k + \frac{\alpha_k}{\delta}d_{\Omega k}^I\right) - f_{\hat{j}}(x_k) > \beta\frac{\alpha_k}{\delta}\theta_\Omega^I(x_k).
	\end{equation*}
	Using the Mean-value Theorem, we have that 
	\begin{equation*}
		f_{\hat{j}}\left(x_k + \frac{\alpha_k}{\delta}d_{\Omega k}^I\right) - f_{\hat{j}}(x_k) = \frac{\alpha_k}{\delta}\nabla f_{\hat{j}}(\xi_k)^Td_{\Omega k}^I,
	\end{equation*}
	with
	\begin{equation*}
		\xi_k = x_k + t_k \frac{\alpha_k}{\delta}d_{\Omega k}^I, \hspace{0.3cm} t_k \in (0, 1).
	\end{equation*}
	Then, we can write 
	\begin{equation*}
		\nabla f_{\hat{j}}(\xi_k)^Td_{\Omega k}^I > \beta\theta_\Omega^I(x_k),
	\end{equation*}
	from which we can state that 
	\begin{equation*}
		\nabla f_{\hat{j}}(x_k)^Td_{\Omega k}^I + \left[\nabla f_{\hat{j}}(\xi_k) - \nabla f_{\hat{j}}(x_k)\right]^Td_{\Omega k}^I > \beta\theta_\Omega^I(x_k).
	\end{equation*}
	Since $\hat{j} \in I$, we have that 
	\begin{equation*}
		\theta_\Omega^I(x_k) = \max_{j \in I}\nabla f_j(x_k)^Td_{\Omega k}^I \ge \nabla f_{\hat{j}}(x_k)^Td_{\Omega k}^I
	\end{equation*}
	and 
	\begin{equation*}
		(1 - \beta)\theta_\Omega^I(x_k) + \left[\nabla f_{\hat{j}}(\xi_k) - \nabla f_{\hat{j}}(x_k)\right]^Td_{\Omega k}^I > 0.
	\end{equation*}
	Using Equation \eqref{eq::theta-not-0}, we obtain 
	\begin{equation*}
		-(1 - \beta)\bar{\varepsilon} + \left[\nabla f_{\hat{j}}(\xi_k) - \nabla f_{\hat{j}}(x_k)\right]^Td_{\Omega k}^I > 0.
	\end{equation*}
	By taking the limit for $k \rightarrow \infty, k \in \bar{K}$, recalling the continuity of $J_F$, the boundedness of $d_{\Omega k}^I$ and that $\alpha_k \rightarrow 0$, we get that 
	\begin{equation*}
		-(1 - \beta)\bar{\varepsilon} > 0.
	\end{equation*}
	Since $1 - \beta > 0$ and $\bar{\varepsilon} > 0$, we get the contradiction. So, we prove that the limit point $\bar{x}$ of the sequence $\{x_k\}$ is Pareto-stationary w.r.t.\ $F_I$.
\end{proof}

Finally, we prove that, when a stopping criterion based on the $\varepsilon$-Pareto-stationarity is considered, \texttt{FMOPG} is well defined, i.e., it terminates in a finite number of iterations.

\begin{proposition}
	\label{prop::fin-ter-FMOPG}
	Let Assumption \ref{ass::par-stat-bound} hold with $I \subseteq \{1,\ldots, m\}$, the set $X_0$ and the point $x_0$. Let $\varepsilon > 0$. Then, the \texttt{FMOPG} algorithm finds in a finite number of steps a point $x_k$ which is $\varepsilon$-Pareto-stationary w.r.t.\ $F_I$.
\end{proposition}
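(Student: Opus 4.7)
The plan is to argue by contradiction, leveraging the convergence result of Proposition \ref{prop::conv-lim-po} together with the continuity of $\theta_\Omega^I$. Suppose FMOPG never returns, so that the algorithm produces an infinite sequence $\{x_k\}$ for which the $\varepsilon$-Pareto-stationarity test fails at every iteration; equivalently,
\begin{equation*}
    \theta_\Omega^I(x_k) < -\varepsilon, \qquad \forall\, k \ge 0.
\end{equation*}
Since Assumption \ref{ass::par-stat-bound} is in force with the given $I$, $X_0$ and $x_0$, the hypotheses of Proposition \ref{prop::conv-lim-po} are satisfied, so the infinite sequence $\{x_k\}$ is bounded (it lies in the compact set $\mathcal{L}(x_0)$) and thus admits at least one limit point $\bar{x}$, which is Pareto-stationary w.r.t.\ $F_I$.

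Next I would exploit that Pareto stationarity at $\bar{x}$ w.r.t.\ $F_I$ is characterized by $\theta_\Omega^I(\bar{x}) = 0$, as recalled in Subsection \ref{subsec::descent-methods} where the function $\theta_\Omega^I$ is introduced and explicitly stated to be continuous. Taking the subsequence $K \subseteq \{0,1,\ldots\}$ along which $x_k \to \bar{x}$ and passing to the limit in the bound $\theta_\Omega^I(x_k) < -\varepsilon$, continuity of $\theta_\Omega^I$ yields
\begin{equation*}
    0 \;=\; \theta_\Omega^I(\bar{x}) \;=\; \lim_{\substack{k\to\infty\\k\in K}} \theta_\Omega^I(x_k) \;\le\; -\varepsilon < 0,
\end{equation*}
which is a contradiction. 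Therefore the assumption of an infinite run is untenable and the while-loop of FMOPG exits in finitely many iterations at an $\varepsilon$-Pareto-stationary point w.r.t.\ $F_I$.

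The argument is essentially a one-liner on top of Proposition \ref{prop::conv-lim-po}, so no serious obstacle remains; the only subtle point is making sure the reasoning is consistent with how FMOPG is embedded in NSMA. In particular, if $x_0$ were already $\varepsilon$-Pareto-stationary, the algorithm would terminate immediately at $k=0$, so assuming Assumption \ref{ass::par-stat-bound} (which includes that $x_0$ is not Pareto-stationary w.r.t.\ $F_I$) is both natural and sufficient to apply Proposition \ref{prop::conv-lim-po}; beyond that, the finite-termination claim follows cleanly from continuity of $\theta_\Omega^I$ and the contradiction above.
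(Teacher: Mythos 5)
Your proof is correct and follows essentially the same route as the paper: assume an infinite run, invoke Proposition \ref{prop::conv-lim-po} under Assumption \ref{ass::par-stat-bound} to extract a subsequence converging to a Pareto-stationary limit point, and derive a contradiction with the failure of the $\varepsilon$-stationarity test along the sequence. The only cosmetic difference is that you pass to the limit in $\theta_\Omega^I(x_k) \le -\varepsilon$ using the stated continuity of $\theta_\Omega^I$, whereas the paper argues via continuity of the max operator and $J_F$ applied to $\max_{j\in I}\nabla f_j(x_k)^T(z-x_k)$; these are the same idea, and your phrasing is, if anything, slightly tidier.
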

\begin{proof}
	We assume, by contradiction, that \texttt{FMOPG} produces an infinite sequence of points $\{x_k\}$ such that, for all $k$, $x_k$ is not $\varepsilon$-Pareto-stationary w.r.t.\ $F_I$. 	
	Since Assumption \ref{ass::par-stat-bound} holds, Proposition \ref{prop::conv-lim-po} ensures that there exists a subsequence $K \subseteq \{1, 2,\ldots\}$ such that 
	\begin{equation*}
		\lim_{\substack{k \rightarrow \infty \\ k \in K}} x_k = \bar{x}
	\end{equation*}
	and $\bar{x}$ is Pareto-stationary w.r.t.\ $F_I$, i.e., recalling Definition \ref{def::par-stat}, for all $z \in \Omega$ such that $\left\| z - \bar{x} \right\| \le 1$ we have that 
	\begin{equation*}
		\max_{j \in I}\nabla f_j(\bar{x})^T(z - \bar{x})\ge 0.
	\end{equation*}
	Given the continuity of the max operator and $J_F$, we can state that 
	\begin{equation*}
		\lim_{\substack{k \rightarrow \infty \\ k \in K}} \max_{j \in I} \nabla f_j(x_k)^T(z - x_k) \ge 0 > -\varepsilon.
	\end{equation*}
	This last statement implies that, for sufficiently large values for $k \in K$, for all $z \in \Omega$ such that $\left\| z - x_k \right\| \le 1$, the following equation has to hold: 
	\begin{equation*}
		\max_{j \in I} \nabla f_j(x_k)^T(z - x_k) > -\varepsilon,
	\end{equation*}
	i.e.\ $x_k$ is $\varepsilon$-Pareto-stationary w.r.t.\ $F_I$. Therefore, we get the contradiction and the assertion is proved.
\end{proof}

\section{Computational experiments}
\label{sec::computational-experiments}

In this section, we provide the results of thorough computational experiments, focusing on the comparison of \texttt{NSMA} with the main state-of-the-art methods in diverse settings. The code of all the algorithms was written in Python3. In addition, all the tests were run on a computer with the following characteristics: Ubuntu 20.04, Intel Xeon Processor E5-2430 v2 6 cores 2.50 GHz, 16 GB RAM. We used the Gurobi Optimizer (Version 9) to solve instances of Problem \eqref{eq::proj-desc}.

\subsection{Settings}
\label{subsec::settings}

In this section, we report detailed information on the settings used for all the considered algorithms in our experiments, the metrics and the problems used to carry out the comparison.

\subsubsection{Metrics}
\label{subsubsec::metrics}

In this section, we provide a little description of the metrics and tools used to compare the algorithms.

The first three metrics are the ones introduced in \cite{custodio2011direct}: \textit{purity}, $\Gamma$\textit{--spread} and $\Delta$\textit{--spread}. These metrics are widely used to evaluate the performance of multi-objective optimization algorithms. 

We recall that the \textit{purity} metric measures the quality of the generated front, i.e., how effective a solver is at obtaining non-dominated points w.r.t.\ its competitors. 
In detail, the purity value indicates the ratio of the number of non-dominated points that a solver obtained over the number of the points produced by that solver. Clearly, a higher value is related to a better performance. In order to calculate the \textit{purity} metric, we need a reference front to establish whether a point is dominated or not. In our experiments, we considered as the reference front the one obtained by combining the fronts retrieved by all the considered algorithms and by discarding the dominated points. 

The \textit{spread} metrics are equally essential, since they measure the uniformity of the generated fronts in the objectives space. The $\Gamma$\textit{--spread} is defined as the maximum $\ell_\infty$ distance in the objectives space between adjacent points of the Pareto front, while the $\Delta$\textit{--spread} basically measures the standard deviation of the $\ell_\infty$ distance between adjacent Pareto front points. As opposed to the \textit{purity}, low values for the \textit{spread} metrics are associated with good performance.

In addition to the previous metrics, we used the \textit{ND-points} metric, introduced in \cite{COCCHI2021100008}. This score substantially indicates the number of non-dominated points obtained by a solver w.r.t.\ the reference front. We consider this metric as important as the \textit{purity} one: in particular, we think that these two metrics should be considered complementary.

Lastly, we employed the performance profiles introduced in \cite{dolan2002benchmarking} to carry out the comparison. Performance profiles are a useful tool to appreciate the relative performance and robustness of the considered algorithms. The performance profile of a solver w.r.t.\ a certain metric is the (cumulative) distribution function of the ratio of the score obtained by a solver over the best score among those obtained by all the considered solvers. 
In other words, it is the probability that the score achieved by a solver in a problem is within a factor $\tau \in \mathbb{R}$ of the best value obtained by any of the solvers in that problem. For a more technical explanation, we refer the reader to \cite{dolan2002benchmarking}. Note that performance profiles w.r.t.\ \textit{purity} and \textit{ND-points} were produced based on the inverse of the obtained values, since the metrics have increasing values for better solutions.

\subsubsection{Algorithms and hyper-parameters}
\label{subsubsec::algorithms_and_hyper_paramaters}

The first two algorithms we chose for the comparisons are, naturally, the \texttt{NSGA-II} \cite{deb2002fast} and the \texttt{FPGA} \cite{cocchi2020convergence} procedures, described in Section \ref{subsec::NSGA-II} and Appendix \ref{app::FPGA}, respectively. We consider these methods as representatives for EAs and descent methods and, thus, \texttt{NSMA} most direct competitors. The parameters values for both algorithms were chosen according to the reference papers. Like \texttt{NSMA}, in \texttt{NSGA-II} the number $N$ of solutions in the population was fixed to 100.

Then, the values for the parameters of \texttt{NSMA} were chosen based on some preliminary experiments on a subset of the tested problems, which we do not report here for the sake of brevity. The values are:

\begin{itemize}
	\item $N = 100$;
	\item $s_h = 10$;
	\item $q = 0.9$;
	\item $n_{opt} = 5$;
	\item in \texttt{B-FALS} $\alpha_0 = 1$, $\beta = 10^{-4}$, $\delta = 0.5$.
\end{itemize}

We also consider in the experiments the \texttt{DMS} algorithm \cite{custodio2011direct}, a multi-objective derivative-free method, inspired by the search/poll paradigm of direct-search methodologies of directional type. \texttt{DMS}  maintains a list of non-dominated points, from which the new iterates or poll centers are chosen. The parameters for this method were set according to the reference paper and the code available online (\url{http://www.mat.uc.pt/dms}).

In most of the computational experiments, for each algorithm and problem we ran the test for up to 2 minutes. A stopping criterion based on a time limit is the fairest way to compare such structurally different algorithms. Obviously, we also took into account other specific stopping criteria indicating that a certain algorithm cannot improve the solutions anymore. 

\texttt{NSMA} and \texttt{NSGA-II} are non-deterministic algorithm. Therefore, we decided to run them 5 times on every problem, with different seeds for the pseudo-random number generator. Every execution was characterized by the same time limit (2 minutes). The five generated fronts were compared based on the \textit{purity} metric and only the best one was chosen as the output of \texttt{NSMA}/\texttt{NSGA-II}. In this context, the reference front was the combination of the fronts of the 5 executions. Executing 5 runs lets \texttt{NSMA}/\texttt{NSGA-II} reduce its sensibility to the seed used for its random operations. On the other side, \texttt{FPGA} and \texttt{DMS} are deterministic and, then, they were executed once.

\subsubsection{Problems}
\label{subsubsec::problems}

The problems constituting the benchmark of the computational experiments are listed in Table \ref{tab::problems}. In this benchmark, we considered problems whose objective functions are at least continuously differentiable almost everywhere. If a problem is characterized by singularities, we counted these latter ones as Pareto-stationary points. All the constraints are defined by finite lower and upper bounds.

\begin{table}[t]
	\centering
	\renewcommand{\arraystretch}{1.15}
	\caption{Problems used in the computational experiments.}
	\label{tab::problems}
	\begin{tabular}{|c||c|c|}
		\hline
		\rule{0pt}{12pt}\texttt{PROBLEM}&$\texttt{n}$&$\texttt{m}$ \\
		\hline
		\hline
		CEC09\_1, CEC09\_2, CEC09\_3,  & 5, 10, 20, 30, & \multirow{2}{*}{2} \\
		CEC09\_4, CEC09\_5, CEC09\_6, CEC09\_7& 40, 50, 100, 200 & \\
		\hline
		\multirow{2}{*}{CEC09\_8, CEC09\_9, CEC09\_10}  & 5, 10, 20, 30, & \multirow{2}{*}{3} \\
		& 40, 50, 100, 200 & \\
		\hline
		\multirow{2}{*}{ZDT\_1, ZDT\_2, ZDT\_3, ZDT\_4	}  & 2, 5, 10, 20, 30, & \multirow{2}{*}{2} \\
		& 40, 50, 100, 200 & \\
		\hline
		MOP\_1 & 1 & 2 \\
		\hline
		\multirow{2}{*}{MOP\_2} & 2, 5, 10, 20, 30, & \multirow{2}{*}{2} \\
		& 40, 50, 100, 200 & \\
		\hline
		MOP\_3 & 2 & 2 \\
		\hline
		\multirow{2}{*}{MAN}  & 2, 5, 10, 20, 30, & \multirow{2}{*}{2} \\
		& 40, 50, 100, 200 & \\
		\hline
	\end{tabular}
\end{table}

The set is mainly composed by the CEC09 problems \cite{zhang_multiobjective_2009}, the ZDT problems \cite{ziztler2000} and the MOP problems \cite{Huband2006}. In particular, some of the CEC09 and the ZDT problems have particularly difficult objective functions. Hence, these problems are particularly interesting for the analysis of the behavior of the algorithms with hard tasks. 

We also defined a new test problem with convex objective functions: we refer to it as the MAN problem. Its formal definition is the following:

\begin{equation*}
	\begin{aligned}
		\min_{x\in \mathbb{R}^n}\quad& \begin{array}{l}
			f_1(x) = \sum_{i = 1}^{n} (x_i - i)^2 / n^2\\\\ f_2(x) =  \sum_{i=1}^{n} e^{-x_i} + x_i
		\end{array}\\\\\text{s.t.}\quad&x \in [-10^4, 10^4].
	\end{aligned}
\end{equation*}

Inspired by Cust\'odio \textit{et al.} \cite{custodio2011direct}, for each problem the initial points were uniformly selected from the hyper-diagonal defined by the bound constraints. Furthermore, the number of initial points is equal to the dimension $n$ of the problem. Since in the MOP\_1 problem $n=1$, only in this case we started the tests from one feasible point, namely, $x=0$.

\subsection{Experimental comparisons between \texttt{NSGA-II} and \texttt{FPGA}}
\label{subsec::NSGA-II-vs-FPGA}
Before turning to the evaluation of the \texttt{NSMA}, we carry out a preliminary study.

Evolutionary algorithms and descent methods have their own drawbacks. In particular, EAs do not have theoretical convergence properties. In addition, they can be very expensive in particular settings. On the other side, descent algorithms suffer on highly non-convex problems: in these cases, they often produce sub-optimal solutions, especially when the starting points are not chosen carefully.

In this section, we want to address two topics:
\begin{itemize}
	\item the impact of convexity of the objective functions on the performance of these algorithms;
	\item the  behavior of the methods as the problem dimension $n$ increases.
\end{itemize}

For the comparisons of this section, we only considered the \texttt{NSGA-II} and \texttt{FPGA} algorithms, which we respectively pick as representatives for the two classes of methods.

As benchmark, we picked four problems that are scalable w.r.t.\ the problem dimension $n$ and have the following features:
\begin{itemize}
	\item the MAN problem and the ZDT\_1 problem have convex objective functions;
	\item the CEC09\_4 problem and the ZDT\_3 problem have nonconvex objective functions.
\end{itemize}
For these comparisons, each problem was tested for values of $n\in \{5, 10,\allowbreak 20, 30, 40, 50, 100, 200\}$.

\begin{figure*}[!t]
	\centering
	\subfloat[]{\includegraphics[width=1.5in]{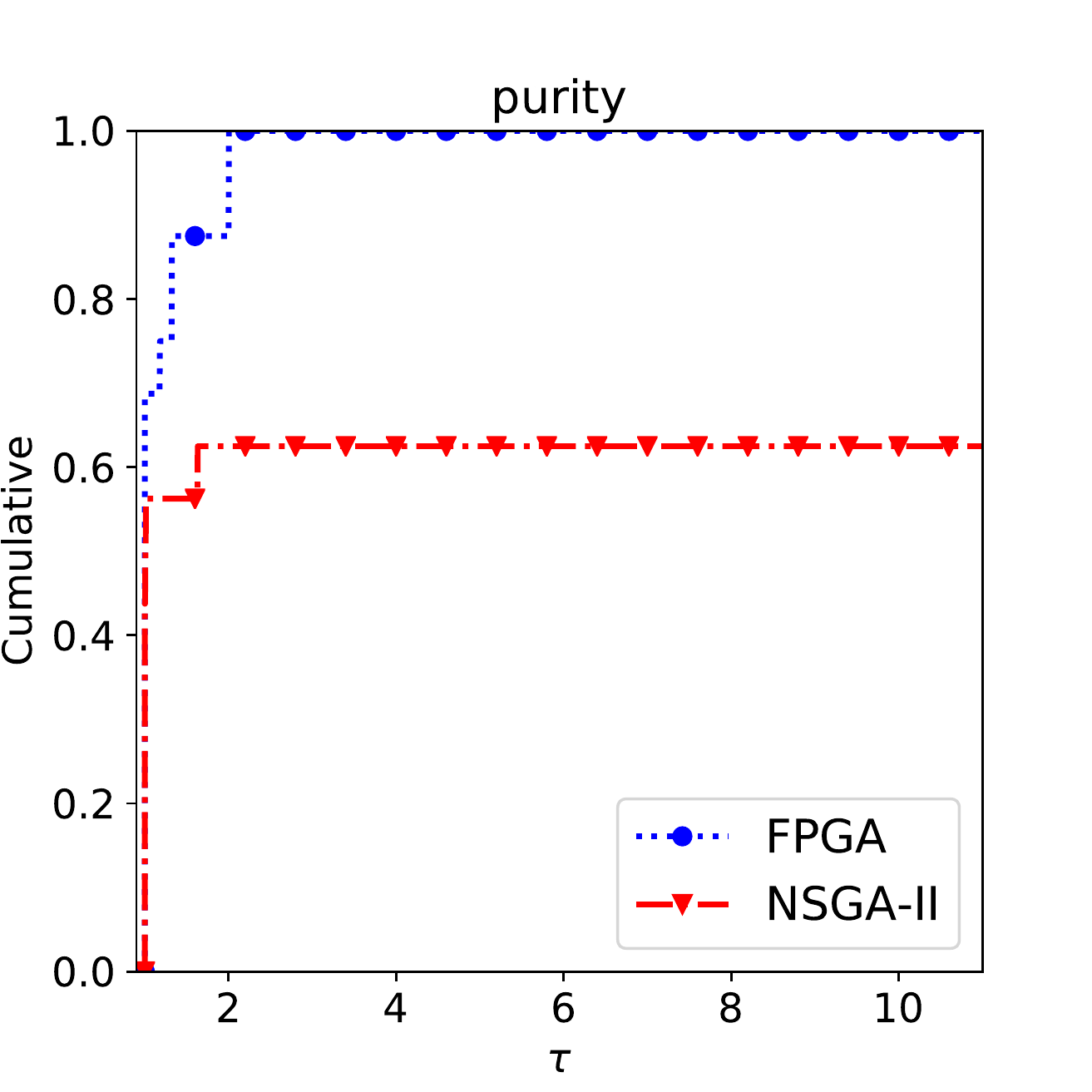}}
	\hfil
	\subfloat[]{\includegraphics[width=1.5in]{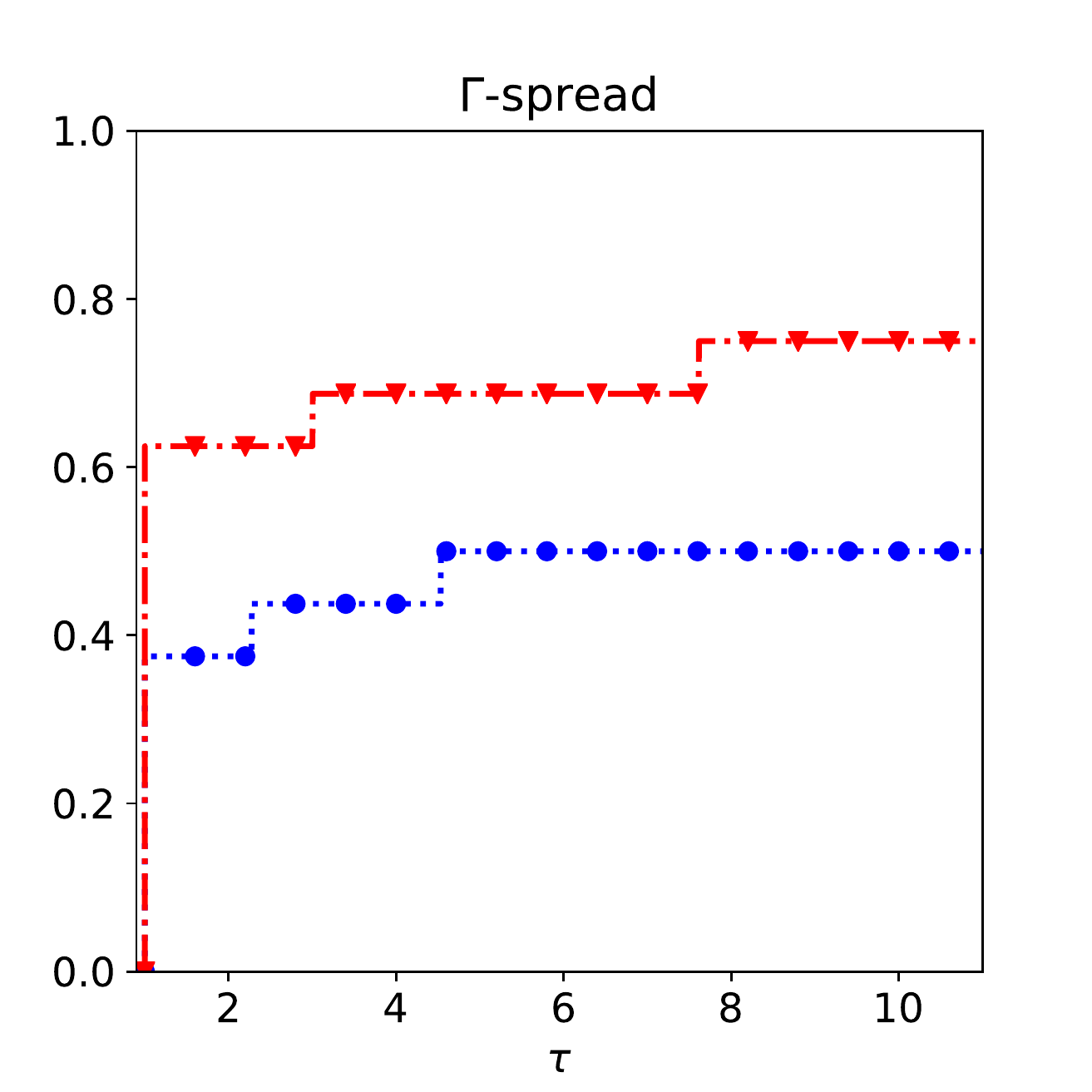}}
	\hfil
	\subfloat[]{\includegraphics[width=1.5in]{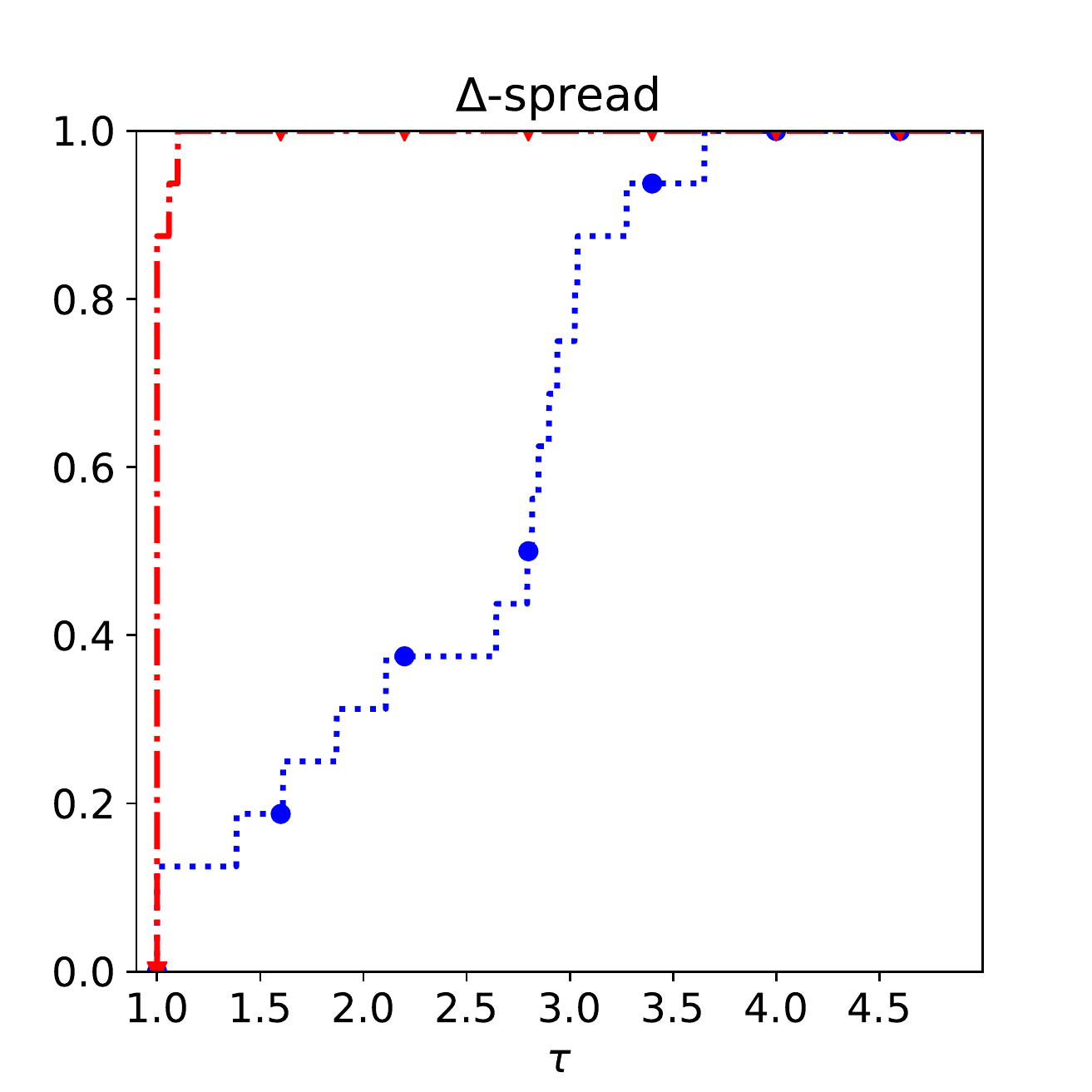}}
	\caption{Performance profiles for \texttt{FPGA} and \texttt{NSGA-II} on the \textit{convex} MAN and ZDT\_1 problems (for interpretation of the references to color in text, the reader is referred to the web version of the article). (a) \textit{purity.} (b) $\Gamma$\textit{--spread.} (c) $\Delta$\textit{--spread.}}
	\label{fig::conv-prob-perf-prof}
\end{figure*}

\begin{figure*}[!t]
	\centering
	\subfloat[]{\includegraphics[width=1.5in]{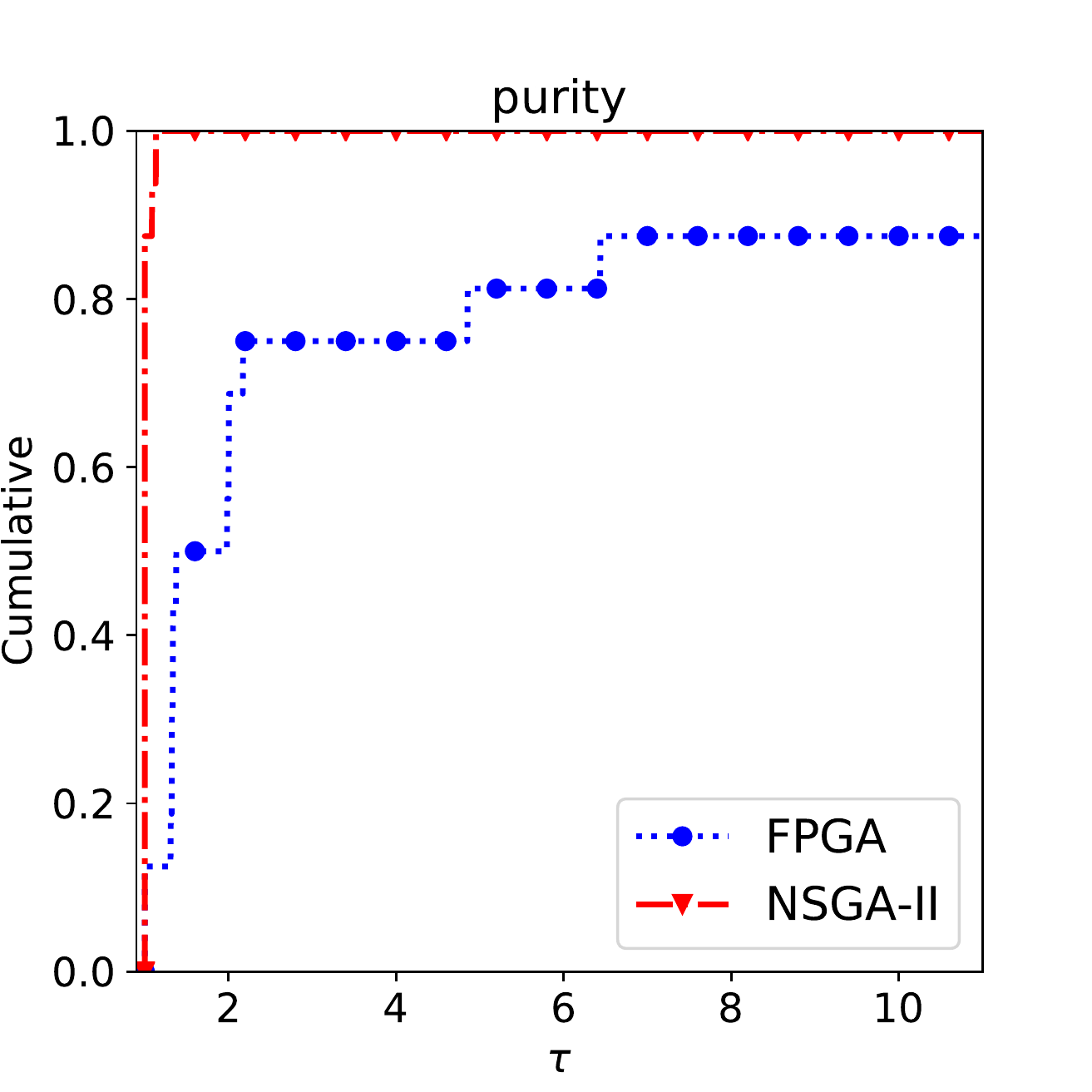}}
	\hfil
	\subfloat[]{\includegraphics[width=1.5in]{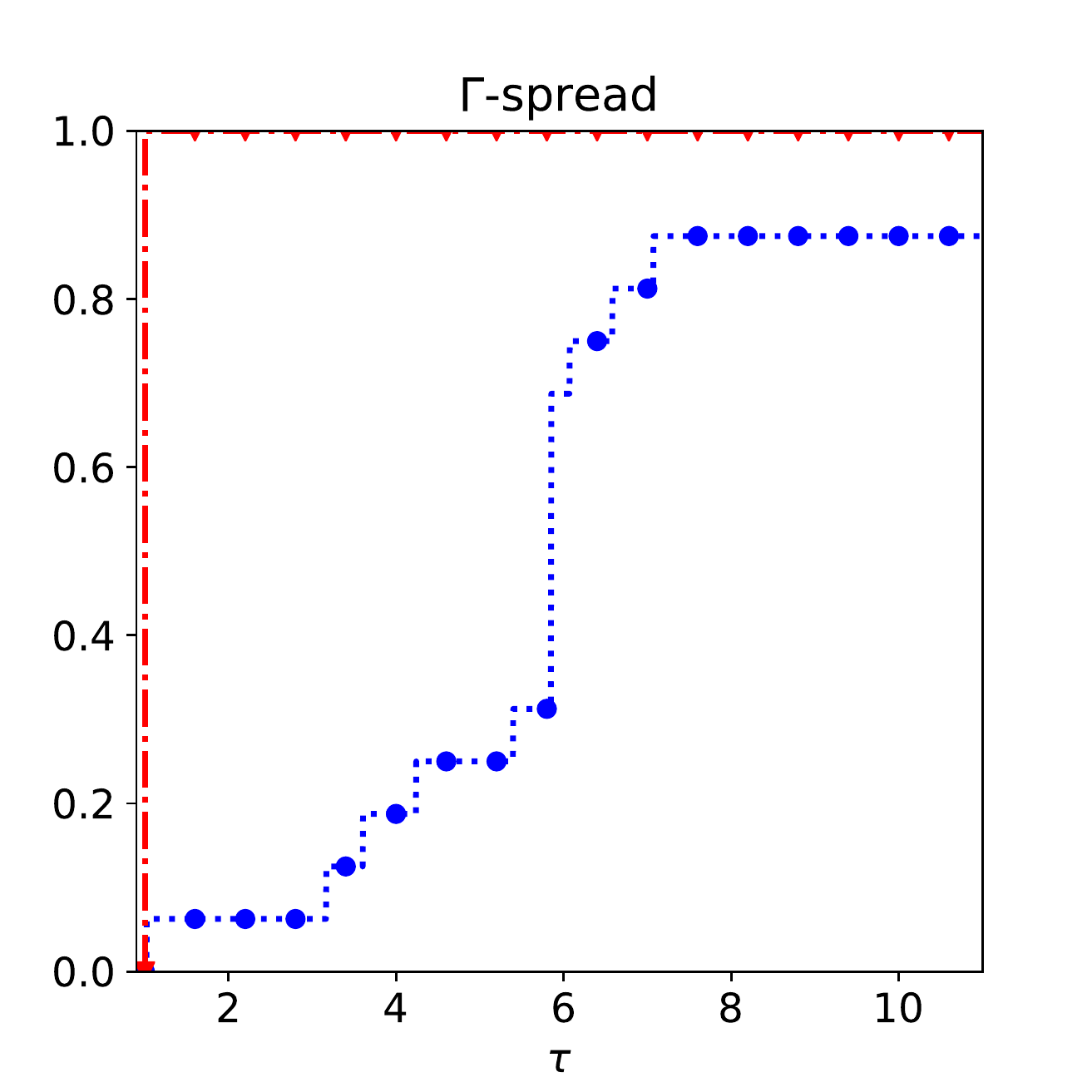}}
	\hfil
	\subfloat[]{\includegraphics[width=1.5in]{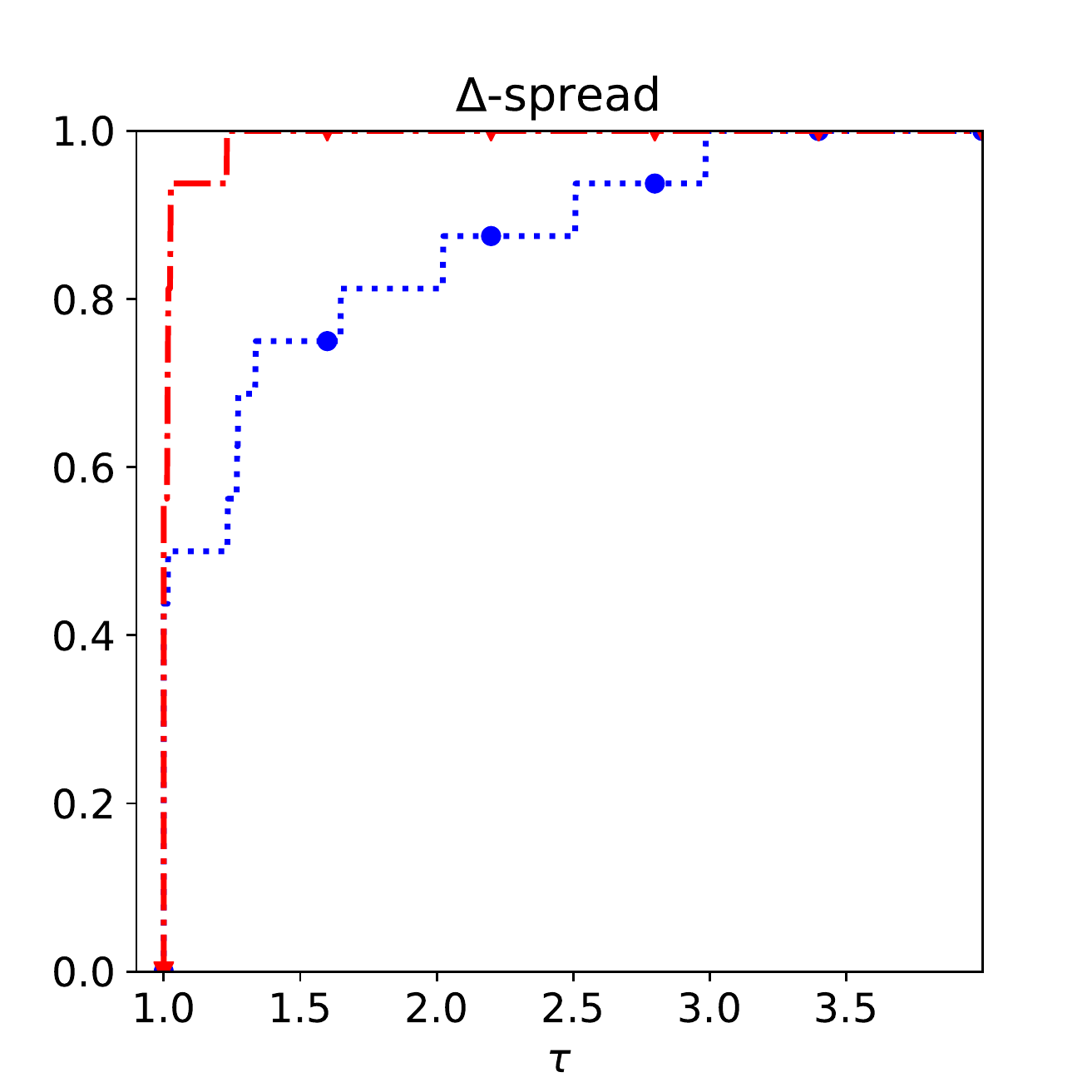}}
	\caption{Performance profiles for \texttt{FPGA} and \texttt{NSGA-II} on the \textit{nonconvex} CEC09\_4 and ZDT\_3 problems (for interpretation of the references to color in text, the reader is referred to the web version of the article). (a) \textit{purity.} (b) $\Gamma$\textit{--spread.} (c) $\Delta$\textit{--spread.}}
	\label{fig::non-conv-prob-perf-prof}
\end{figure*}

\begin{table*}
	\centering
	\renewcommand{\arraystretch}{1.15}
	\caption{Metrics values obtained by \texttt{FPGA} and \texttt{NSGA-II} in the MAN problem with $n = 5, 10, 20, 30, 40, 50, 100, 200$. The values marked in bold are the best obtained in a specific problem. Each of them is related to a specific score.}
	\label{tab::gen-perf-prof-MAN}
	\begin{tabular}{|c||cc||cc||cc|}%
		\hline%
		\multirow{3}{*}{$\texttt{n}$}&\multicolumn{6}{c|}{MAN ($F$ convex)}\\%
		\cline{2%
			-%
			7}%
		&\multicolumn{2}{c||}{\textit{purity}}&\multicolumn{2}{c||}{$\Gamma$\textit{--spread}}&\multicolumn{2}{c|}{$\Delta$\textit{--spread}}\\%
		\cline{2%
			-%
			7}%
		&\texttt{FPGA}&\texttt{NSGA{-}II}&\texttt{FPGA}&\texttt{NSGA{-}II}&\texttt{FPGA}&\texttt{NSGA{-}II}\\%
		\hline%
		\hline%
		5&\textbf{0.984}&0.98&1.839&\textbf{0.809}&1.928&\textbf{0.69}\\%
		\hline%
		10&\textbf{0.993}&0.61&6.241&\textbf{1.377}&1.791&\textbf{0.547}\\%
		\hline%
		20&\textbf{1.0}&0.05&\textbf{16.65}&49.983&1.59&\textbf{0.754}\\%
		\hline%
		30&\textbf{1.0}&0.0&\textbf{2.318}&89.577&1.353&\textbf{0.724}\\%
		\hline%
		40&\textbf{1.0}&0.0&\textbf{36.716}&279.445&1.458&\textbf{0.906}\\%
		\hline%
		50&\textbf{1.0}&0.0&\textbf{5.868}&412.791&1.231&\textbf{0.889}\\%
		\hline%
		100&\textbf{1.0}&0.0&\textbf{21.854}&3894.709&\textbf{0.95}&1.004\\%
		\hline%
		200&\textbf{1.0}&0.0&\textbf{62.971}&9283.624&\textbf{0.824}&0.906\\%
		\hline%
	\end{tabular}%
\end{table*}

\begin{table*}
	\centering
	\renewcommand{\arraystretch}{1.15}
	\caption{Metrics values obtained by \texttt{FPGA} and \texttt{NSGA-II} in the CEC09\_4 problem with $n = 5, 10, 20, 30, 40, 50, 100, 200$. The values marked in bold are the best obtained in a specific problem. Each of them is related to a specific score.}
	\label{tab::gen-perf-prof-CEC}
	\begin{tabular}{|c||cc||cc||cc|}%
		\hline%
		\multirow{3}{*}{$\texttt{n}$}&\multicolumn{6}{c|}{CEC09\_4 ($F$ non-convex)}\\%
		\cline{2%
			-%
			7}%
		&\multicolumn{2}{c||}{\textit{purity}}&\multicolumn{2}{c||}{$\Gamma$\textit{--spread}}&\multicolumn{2}{c|}{$\Delta$\textit{--spread}}\\%
		\cline{2%
			-%
			7}%
		&\texttt{FPGA}&\texttt{NSGA{-}II}&\texttt{FPGA}&\texttt{NSGA{-}II}&\texttt{FPGA}&\texttt{NSGA{-}II}\\%
		\hline%
		\hline%
		5&0.0&\textbf{1.0}&0.419&\textbf{0.132}&\textbf{0.579}&0.713\\%
		\hline%
		10&0.154&\textbf{0.99}&0.607&\textbf{0.041}&1.109&\textbf{0.549}\\%
		\hline%
		20&0.714&\textbf{0.98}&0.475&\textbf{0.037}&0.68&\textbf{0.537}\\%
		\hline%
		30&0.429&\textbf{0.93}&0.553&\textbf{0.078}&0.678&\textbf{0.55}\\%
		\hline%
		40&0.2&\textbf{0.97}&0.501&\textbf{0.093}&0.786&\textbf{0.618}\\%
		\hline%
		50&0.025&\textbf{0.95}&0.544&\textbf{0.128}&1.845&\textbf{0.618}\\%
		\hline%
		100&\textbf{0.965}&0.89&0.514&\textbf{0.143}&1.757&\textbf{0.701}\\%
		\hline%
		200&\textbf{0.915}&0.81&0.483&\textbf{0.474}&1.673&\textbf{1.016}\\%
		\hline%
	\end{tabular}%
\end{table*}

We show the performance profiles for the two algorithms on the convex problems in Figure \ref{fig::conv-prob-perf-prof} and on the nonconvex problems in Figure \ref{fig::non-conv-prob-perf-prof}.

We can observe that in the former case the \texttt{FPGA} turned out to be better than \texttt{NSGA-II} in terms of \textit{purity}. This result reasonably comes from the fact that, in problems characterized by convex objective functions, the use of first-order information and common descent directions lets \texttt{FPGA} find better solutions than \texttt{NSGA-II} for equal computational budget. 
On the contrary, the \texttt{FPGA} algorithm was outperformed by \texttt{NSGA-II} in terms of $\Gamma$\textit{--spread} and $\Delta$\textit{--spread}. In this perspective, the \texttt{crossover} and \texttt{mutation} operations of \texttt{NSGA-II} allow to consistently obtain spread Pareto front approximations, while the constrained steepest partial descent directions and the \texttt{B-FALS} employed by the \texttt{FPGA} are apparently not as effective.

As for the nonconvex case, we can observe from the \textit{purity} profile that now the \texttt{FPGA} obtained many points that are dominated by those produced by \texttt{NSGA-II}. The results with the \textit{spread} metrics are instead analogous to the convex case, with \texttt{NSGA-II} outperforming \texttt{FPGA}. However, the performance gap in terms of $\Gamma$\textit{--spread} is even larger, while it is less marked for the $\Delta$\textit{--spread}.

In order to assess the performance of the algorithms as the problem dimension $n$ increases, in Tables~\ref{tab::gen-perf-prof-MAN} and \ref{tab::gen-perf-prof-CEC} we show in detail the metrics values achieved by the two methods on a convex (MAN) and a nonconvex (CEC09\_4) problems. Again, the table shows the overall strength of \texttt{NSGA-II} w.r.t.\ the $\Delta$\textit{--spread} metric. 

As for the $\Gamma$\textit{--spread}, in the MAN problem we can observe the great results achieved by \texttt{FPGA}: it outperformed \texttt{NSGA-II} considering values of $n$ equal to or greater than 20. In these cases, the constrained steepest partial descent directions and the \texttt{B-FALS} algorithm turned out to be helpful in exploring the extreme regions of the objectives space and, then, in finding a spread approximation of the Pareto front. In the CEC09\_4 problem, it is the opposite: the genetic algorithm managed to obtain the best $\Gamma$\textit{--spread} values. 

The \textit{purity} values indicate another relevant feature of the two algorithms. In the nonconvex case, \texttt{FPGA} turned out not to be capable of obtaining better points than \texttt{NSGA-II} for low values of $n$. However, as the value of $n$ increased, the situation gradually changed and \texttt{FPGA} finally obtained better \textit{purity} values w.r.t.\ its competitor on the largest problems. These results remark one of the drawbacks of the EAs, i.e., the limited scalability. In this case, common descent directions can be very helpful for cheaply improving the quality of the solutions. 

In conclusion, both algorithms have features that make them very effective in specific situations: \texttt{FPGA} was better in convex and/or high dimensional problems, while \texttt{NSGA-II} was more effective in non-convex low dimensional ones. Furthermore, the genetic features of \texttt{NSGA-II} let this latter one perform better in finding spread and uniform Pareto fronts most of the times: this is also reflected in the \textit{spread} metrics values obtained by \texttt{NSGA-II}. All these facts remark once again how much trying to join these benefits in one algorithm might be appealing.

\subsection{Preliminary comparisons between \texttt{NSMA} and the state-of-the-art algorithms}
\label{subsec::first-impressions}

In this section, we provide the results on two problems along with some first comments about the behavior of the four algorithms. We analyzed the CEC09\_3 problem with $n = 10$ and the ZDT\_3 problem with $n = 20$. The first one has particularly difficult objective functions, while the second one is also characterized by a composite function and a disconnected front which is not convex everywhere. We consider these problems suitable to start an analysis about the performance of the considered algorithms.

\begin{figure*}[!t]
	\centering
	\subfloat[]{\includegraphics[width=2.25in]{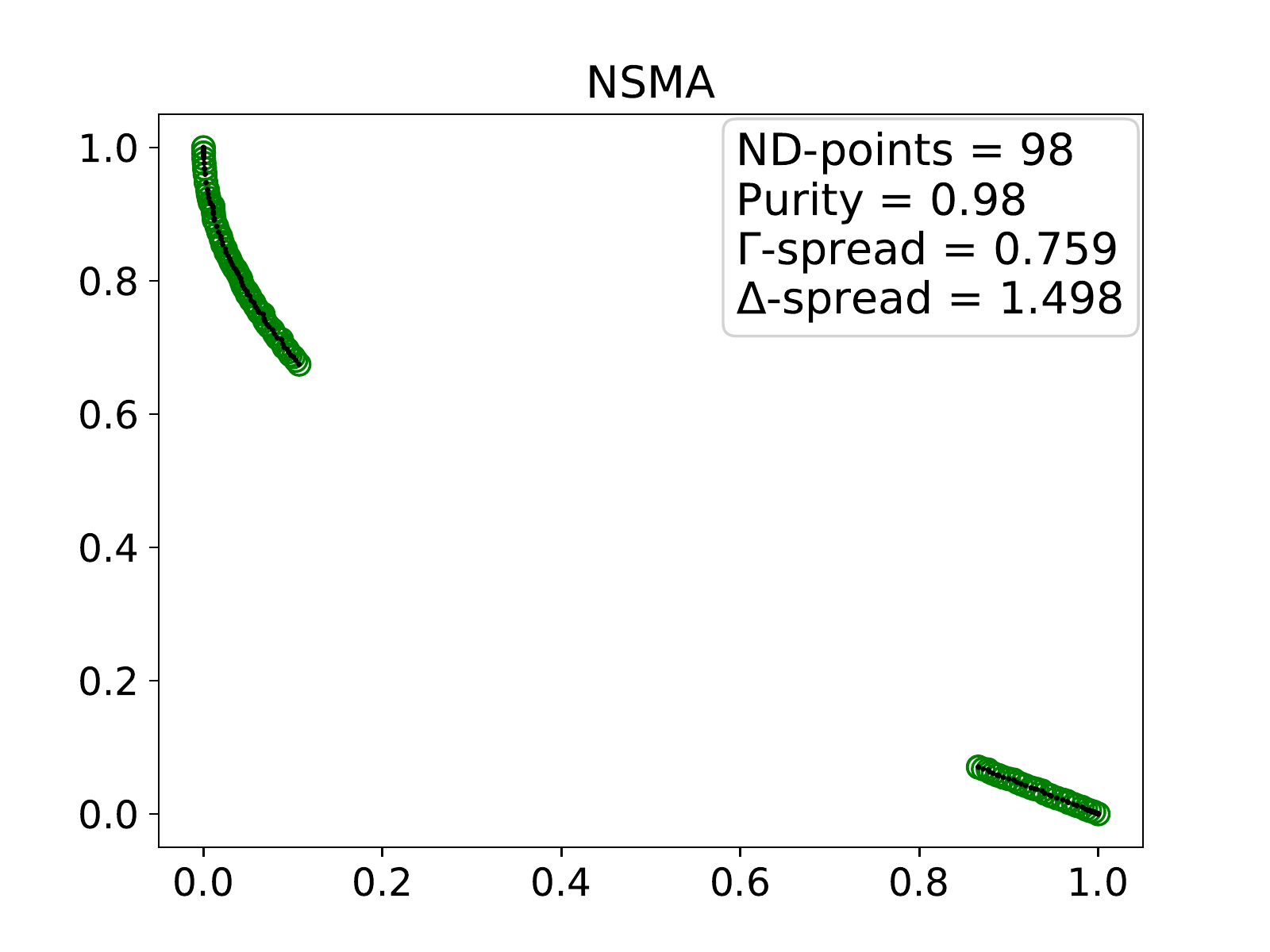}}
	\hfil
	\subfloat[]{\includegraphics[width=2.25in]{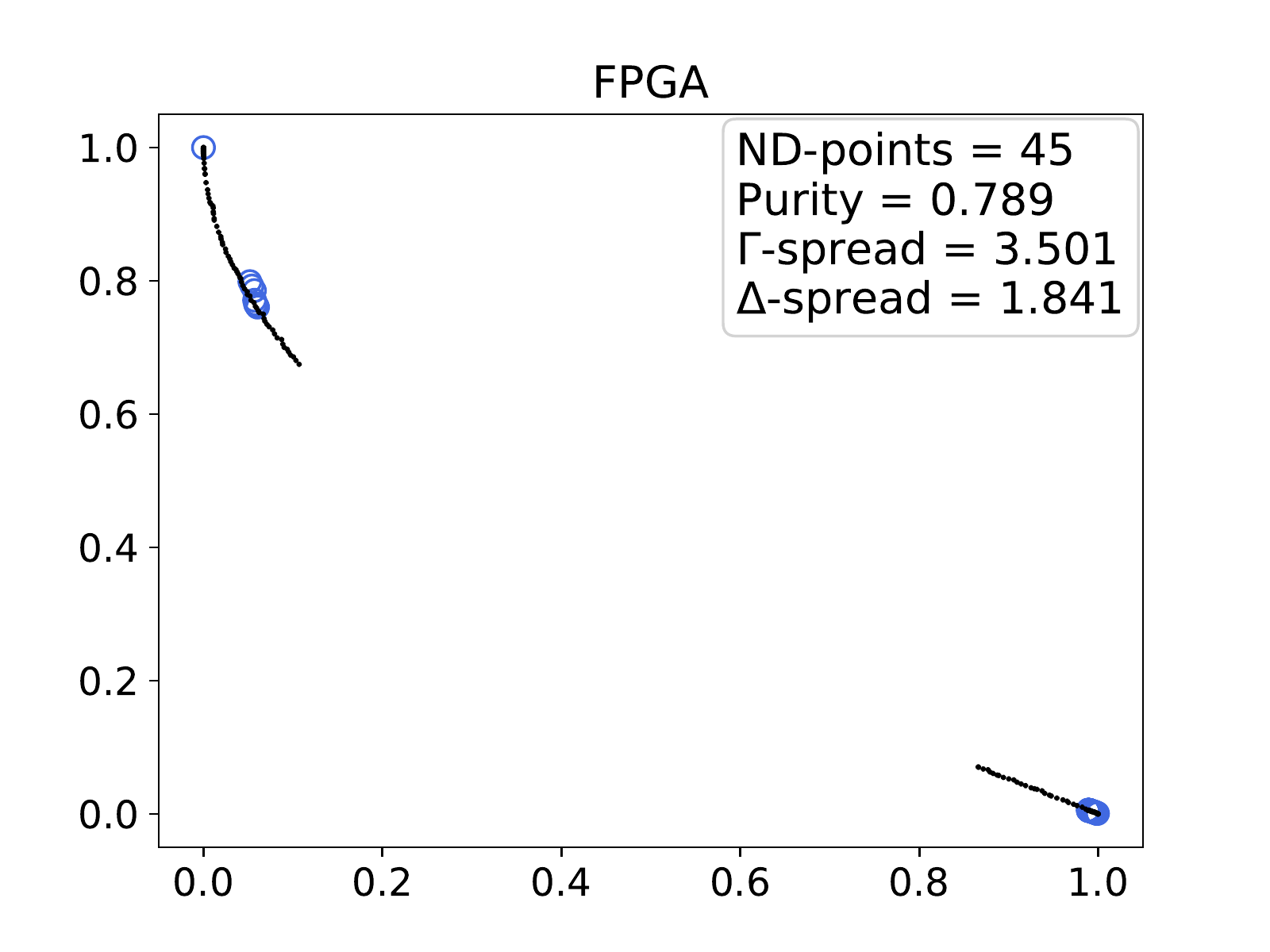}}
	\\
	\subfloat[]{\includegraphics[width=2.25in]{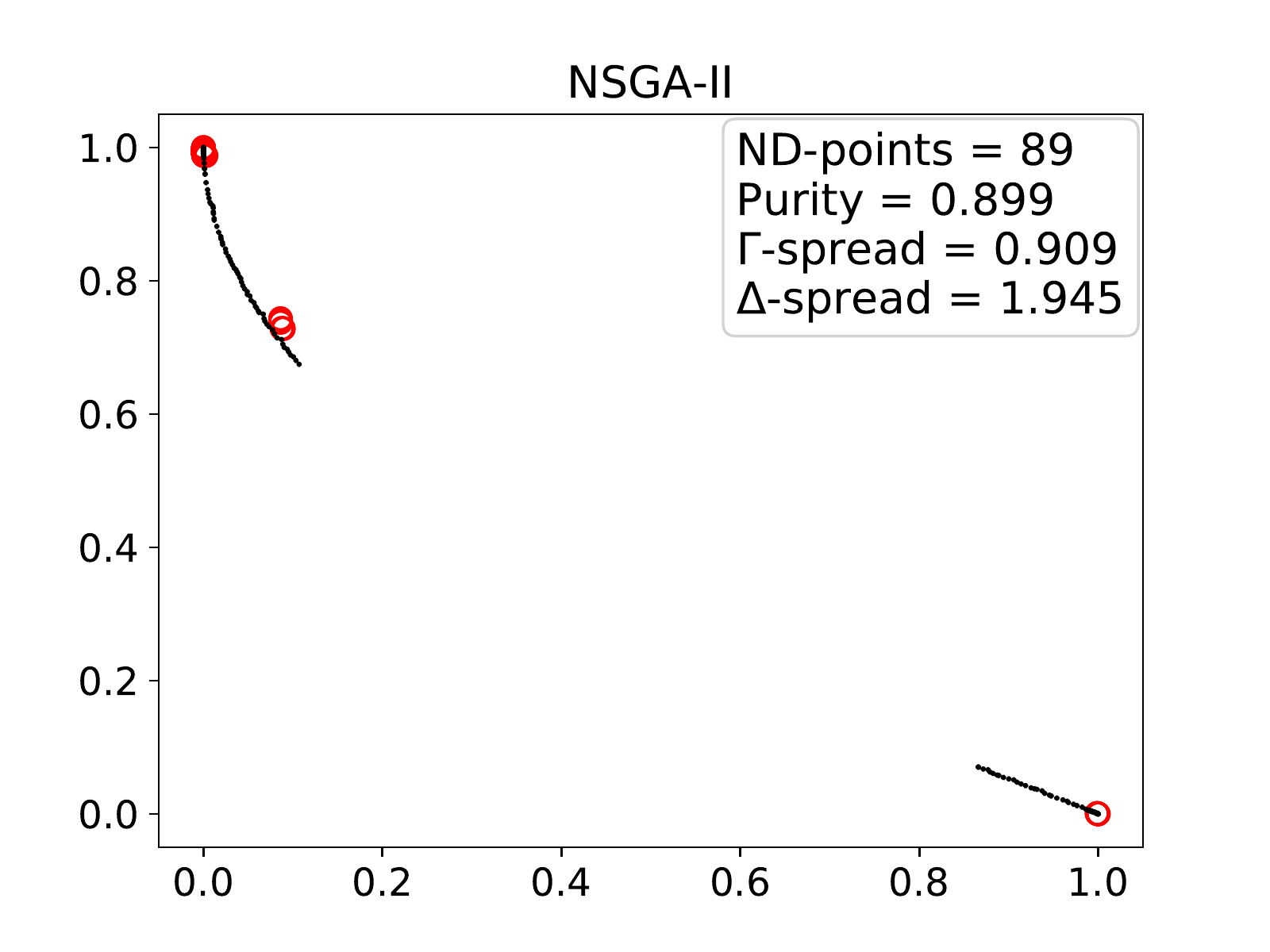}}
	\hfil
	\subfloat[]{\includegraphics[width=2.25in]{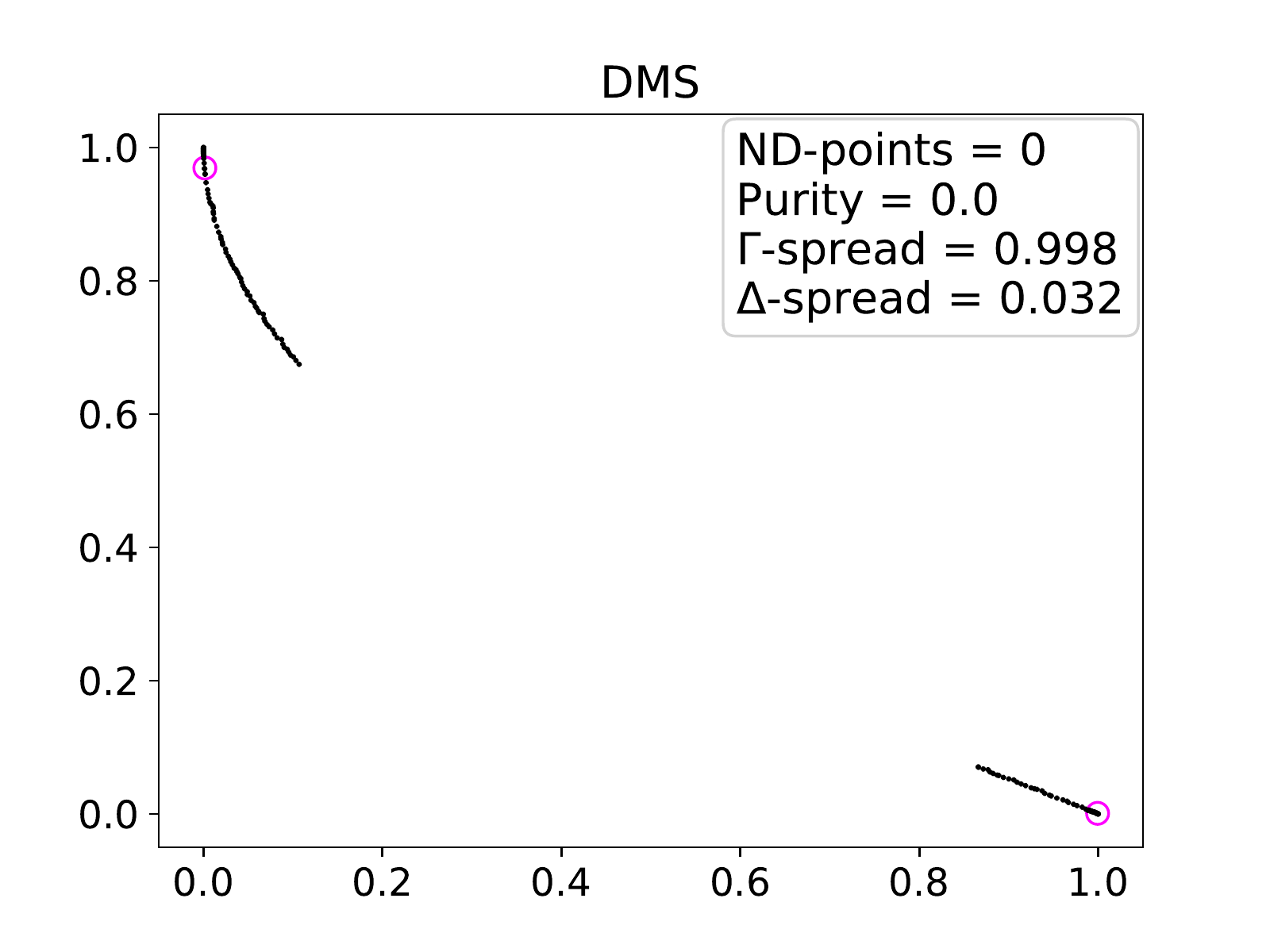}}
	\caption{Approximation of the Pareto front of the CEC09\_3 problem with $n = 10$ (for interpretation of the references to color in text, the reader is referred to the electronic version of the article). (a) \texttt{NSMA}. (b) \texttt{FPGA}. (c) \texttt{NSGA-II}. (d) \texttt{DMS}.}
	\label{fig::SingleProblem - CEC09_3}
\end{figure*}

From the results on the CEC09\_3 problem, shown in Figure \ref{fig::SingleProblem - CEC09_3}, we immediately observe the effectiveness of our approach. Indeed, \texttt{NSMA} outperformed the other algorithms in terms of \textit{ND-points}, \textit{purity} and $\Gamma$\textit{--spread}.

\texttt{NSGA-II} and \texttt{FPGA} turned out to be the second and the third best algorithms, respectively, with \texttt{FPGA} outperforming the genetic method only in terms of $\Delta$\textit{--spread}. Note that \texttt{FPGA} achieved a high value for the $\Gamma$\textit{--spread} metric since it produced a suboptimal point that is dominated and far from the reference front. This point is not shown in the figure for graphical reasons. 

\texttt{NSGA-II} and \texttt{FPGA} seem not to be capable of spreading the search in the objectives space. Indeed, they retrieved many points but most of them are concentrated in a small portion of the objectives space. In this regard, \texttt{NSMA} was better: this result arguably comes from the use of constrained steepest partial descent directions with points characterized by a high crowding distance. Indeed, using descent steps at such points lets \texttt{NSMA} obtain a more spread and uniform Pareto front approximation w.r.t.\ its competitors.

\begin{figure*}[!t]
	\centering
	\subfloat[]{\includegraphics[width=2.25in]{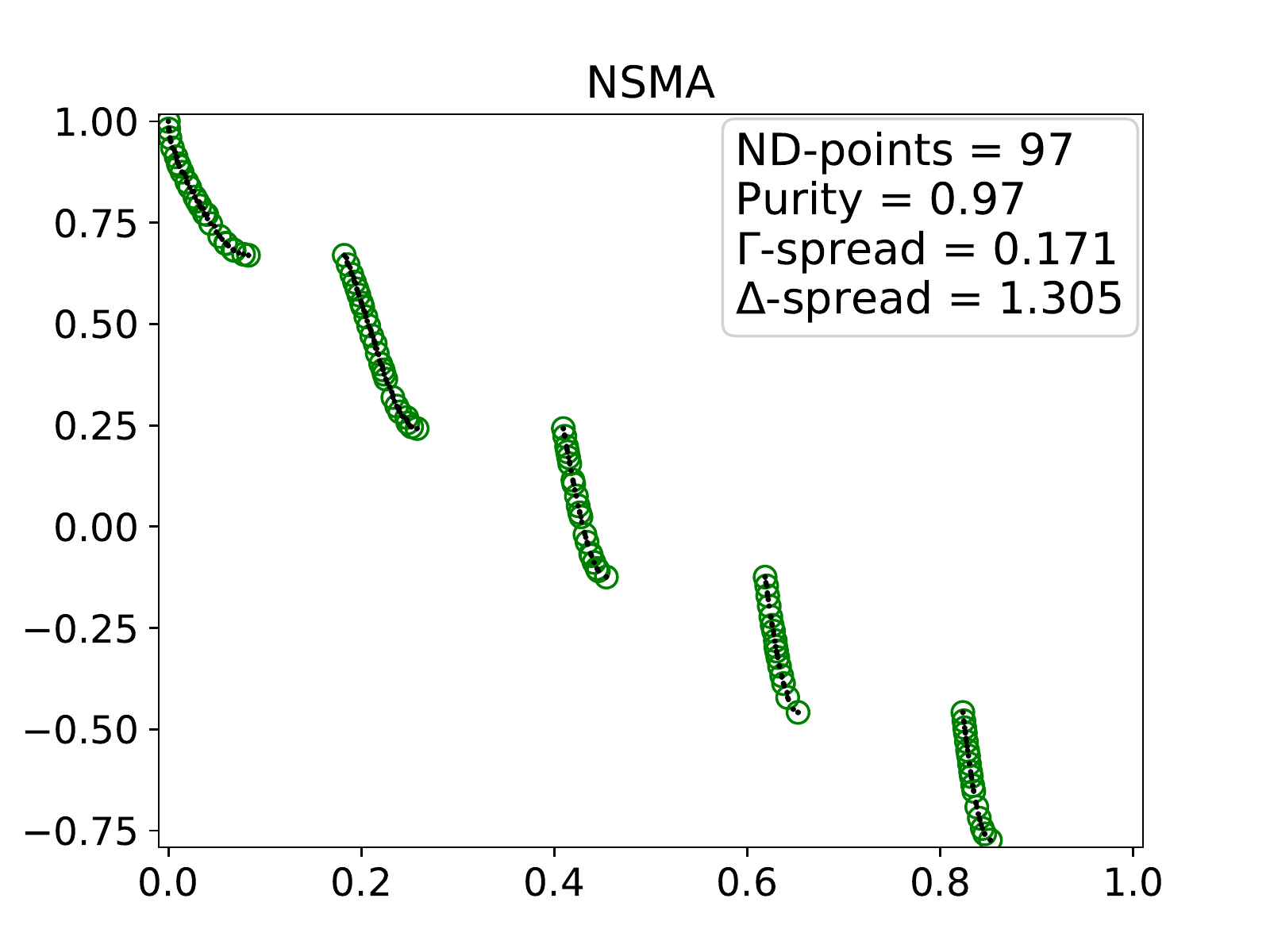}}
	\hfil
	\subfloat[]{\includegraphics[width=2.25in]{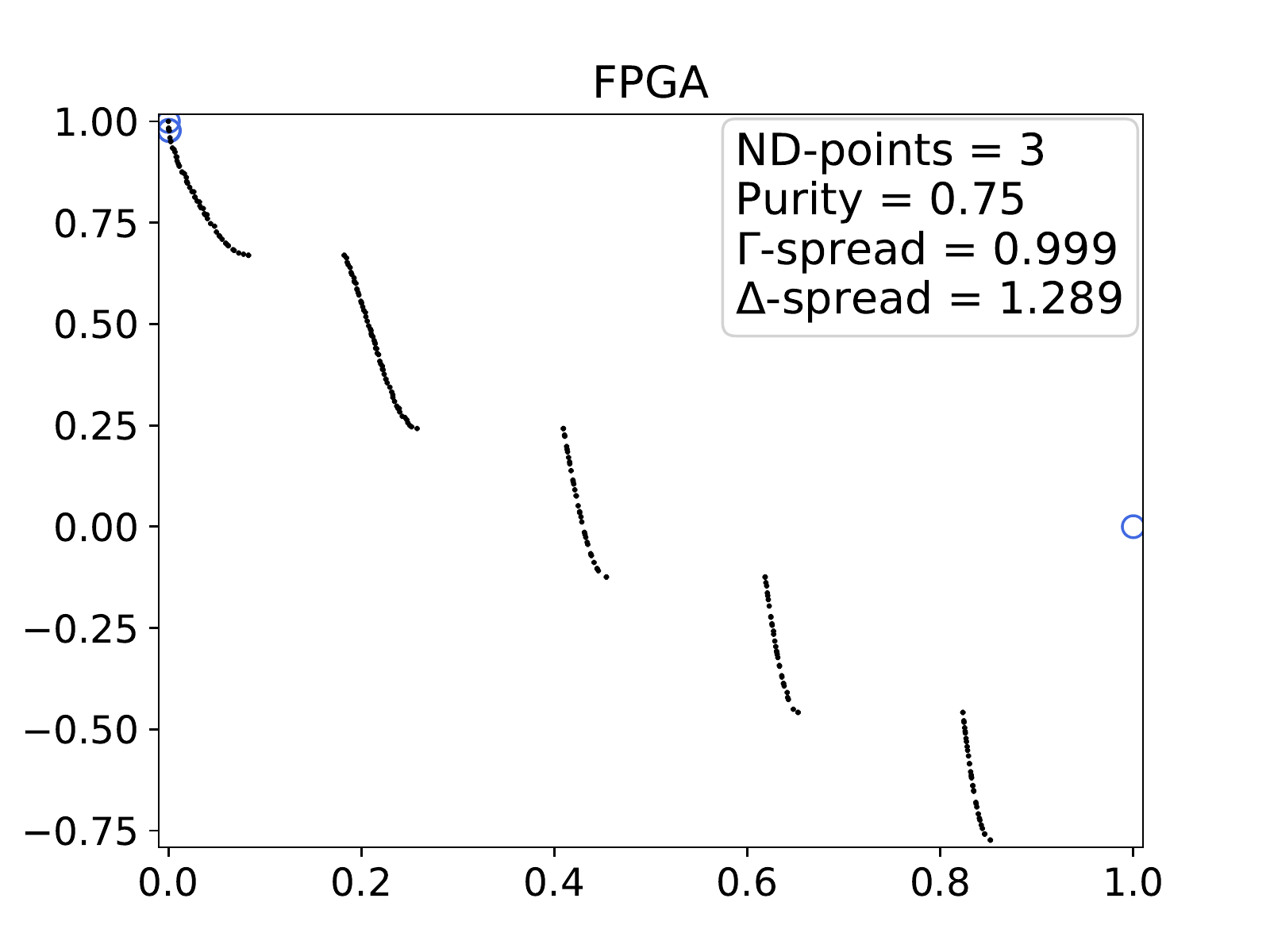}}
	\\
	\subfloat[]{\includegraphics[width=2.25in]{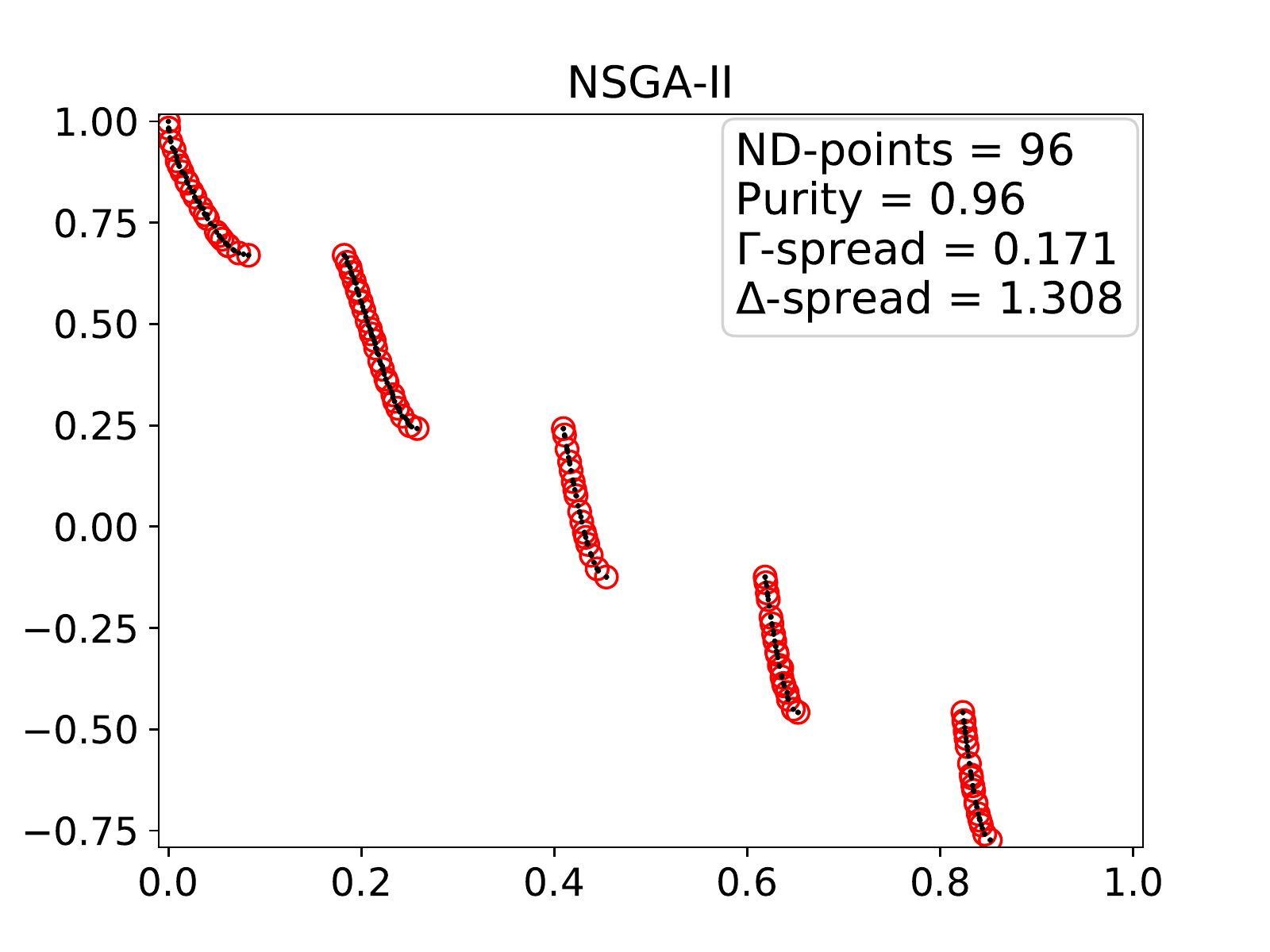}}
	\hfil
	\subfloat[]{\includegraphics[width=2.25in]{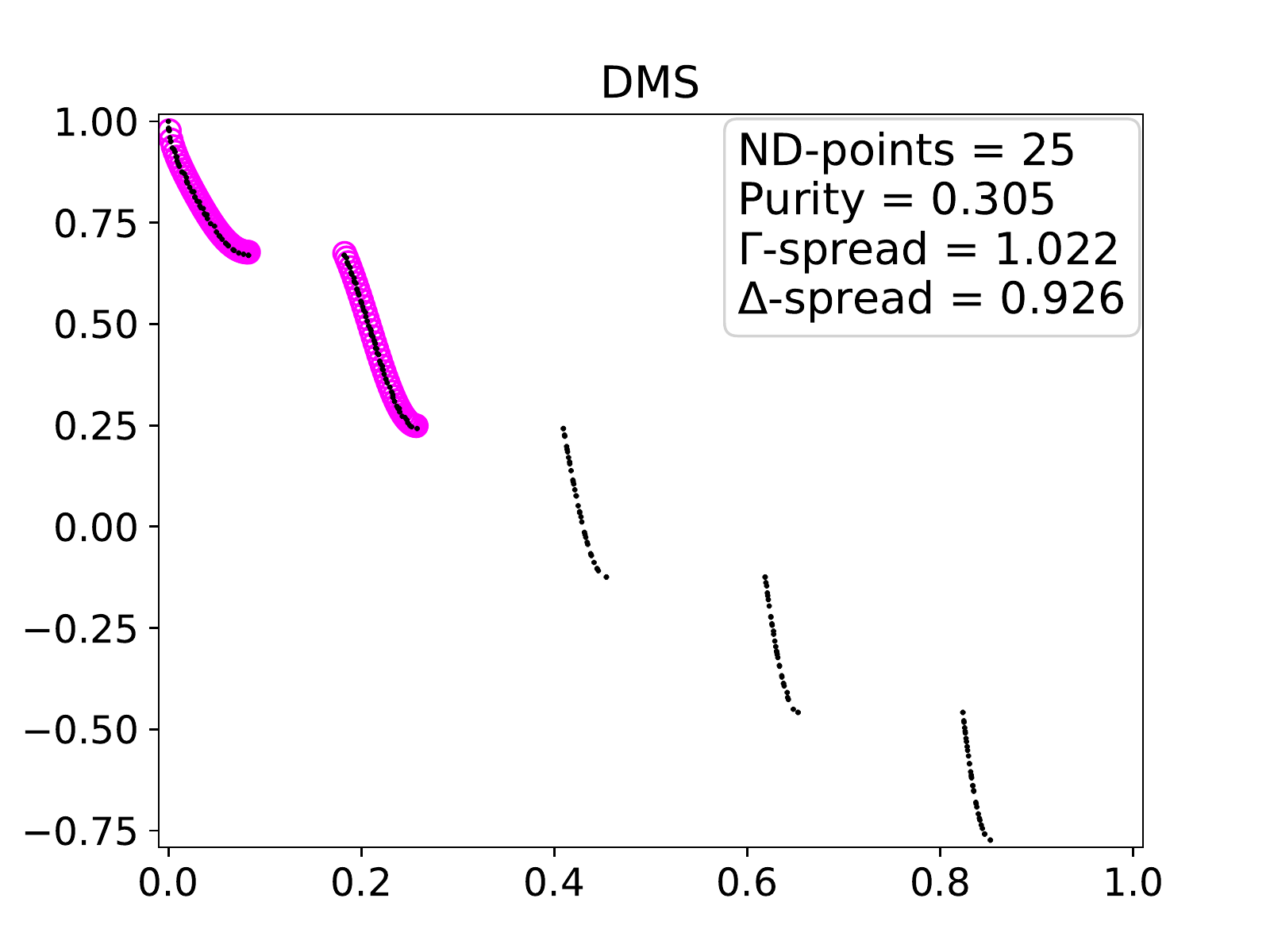}}
	\caption{Approximation of the Pareto front of the ZDT\_3 problem with $n = 20$ (for interpretation of the references to color in text, the reader is referred to the electronic version of the article). (a) \texttt{NSMA}. (b) \texttt{FPGA}. (c) \texttt{NSGA-II}. (d) \texttt{DMS}.}
	\label{fig::SingleProblem - ZDT_3}
\end{figure*}

\texttt{NSMA} and \texttt{NSGA-II} turned out to be the best algorithms on the ZDT\_3 problem, as we can observe in Figure \ref{fig::SingleProblem - ZDT_3}. Furthermore, they exhibited very similar performance. It is known that \texttt{NSGA-II} is one of the most effective algorithms to use with the ZDT problem class. Indeed, its genetic features allow it to escape from non-optimal Pareto-stationary solutions and to obtain good results with the most complex functions. \texttt{NSMA} seems to use these features as efficiently as \texttt{NSGA-II}. We also observe a little performance enhancement in terms of \textit{ND-points} and \textit{purity}. 

The lack of these characteristics did not allow \texttt{FPGA} to have the same performance. Indeed, although this algorithm obtained a good value for the \textit{purity} metric, it produced few points and it was not capable to obtain a spread and uniform Pareto front. \texttt{DMS} seems not to have the same issues, having been able to properly identify two blocks of the disconnected front. However, it performed worse than \texttt{FPGA} in terms of \textit{purity}.

Finally, we note that \texttt{NSMA} was better than all its competitors in terms of \textit{ND-points}. It was not obvious a priori to obtain such results, since, as opposed to \texttt{FPGA} and \texttt{DMS}, \texttt{NSMA} considers a fixed number of solutions in the population.   

\subsection{Performance analysis in variable settings}
\label{subsec::convexity-and-scalability}

In this section, we want to assess the robustness of the proposed algorithm in the specific settings where, as highlighted in Section \ref{subsec::NSGA-II-vs-FPGA}, genetic and descent methods exhibit particular struggles. In detail, we compare the performance of the four algorithms (\texttt{NSMA, FPGA, NSGA-II} and \texttt{DMS}) in two peculiar problems already addressed in Section \ref{subsec::NSGA-II-vs-FPGA}: MAN ($F$ convex) and CEC09\_4 ($F$ nonconvex). Moreover, we consider the following problem dimensionalities: $n = 5, 20, 50, 100$.

\begin{figure*}[!t]
	\centering
	\subfloat[]{\includegraphics[width=2.25in]{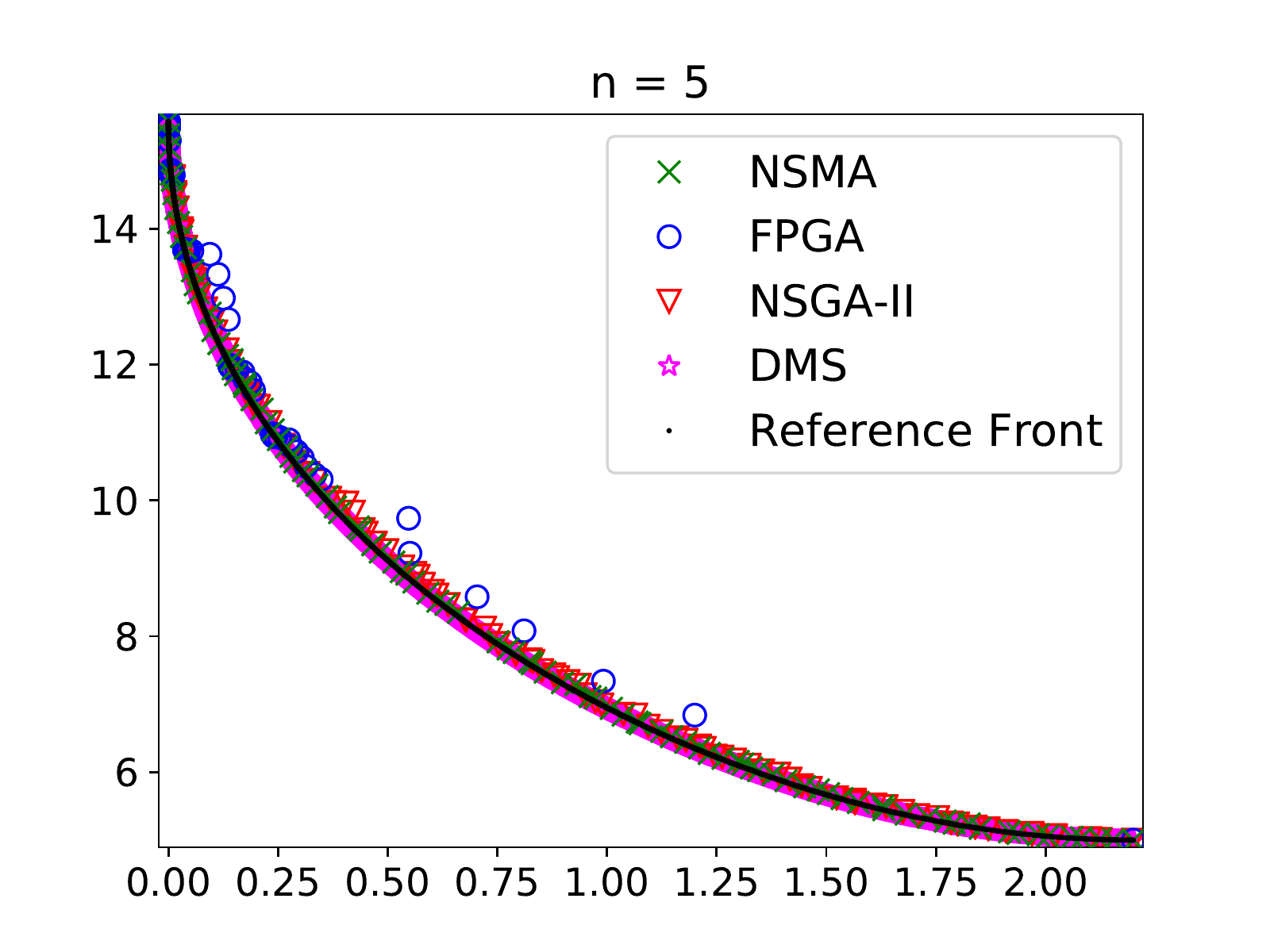}}
	\hfil
	\subfloat[]{\includegraphics[width=2.25in]{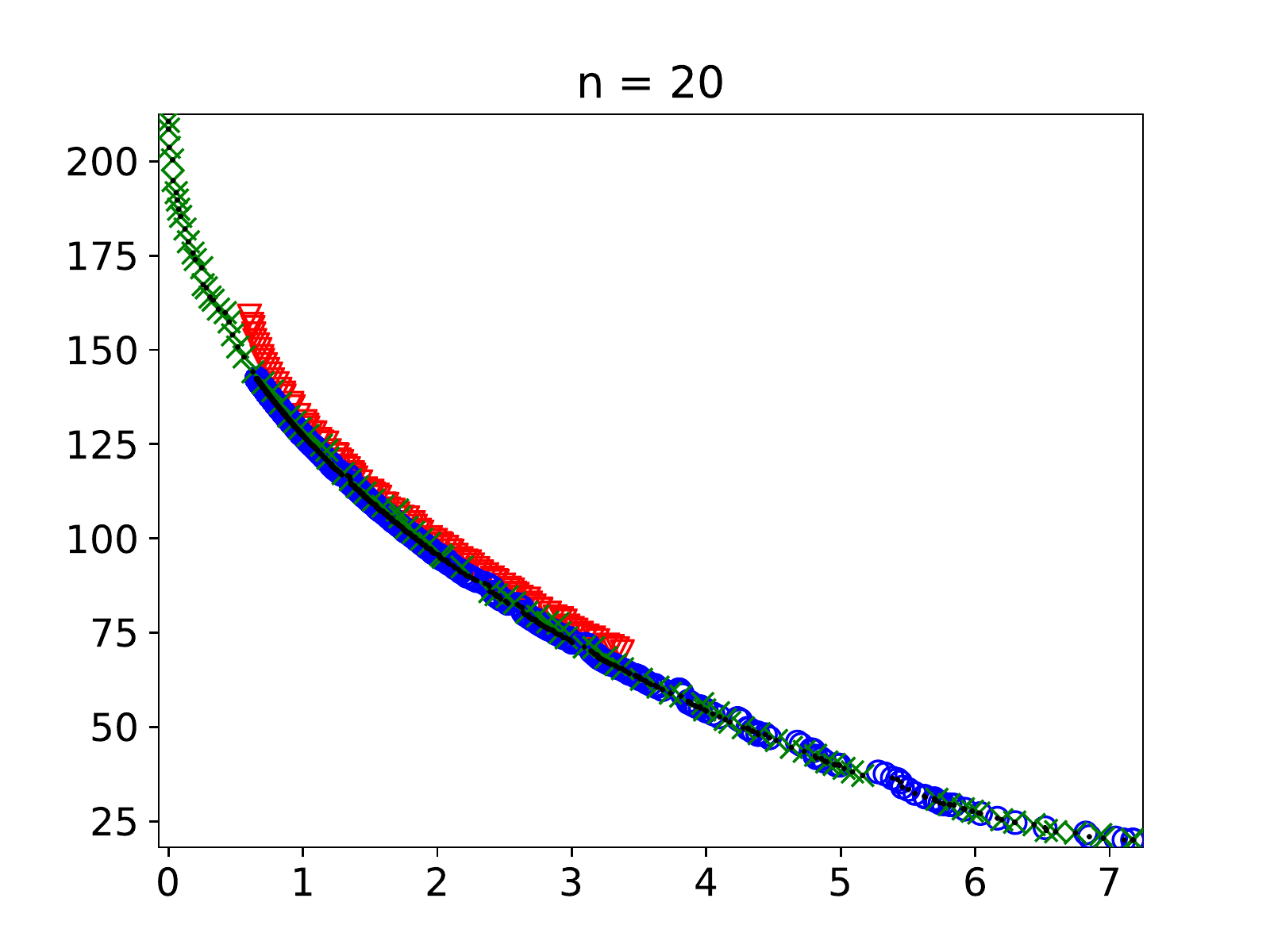}}
	\\
	\subfloat[]{\includegraphics[width=2.25in]{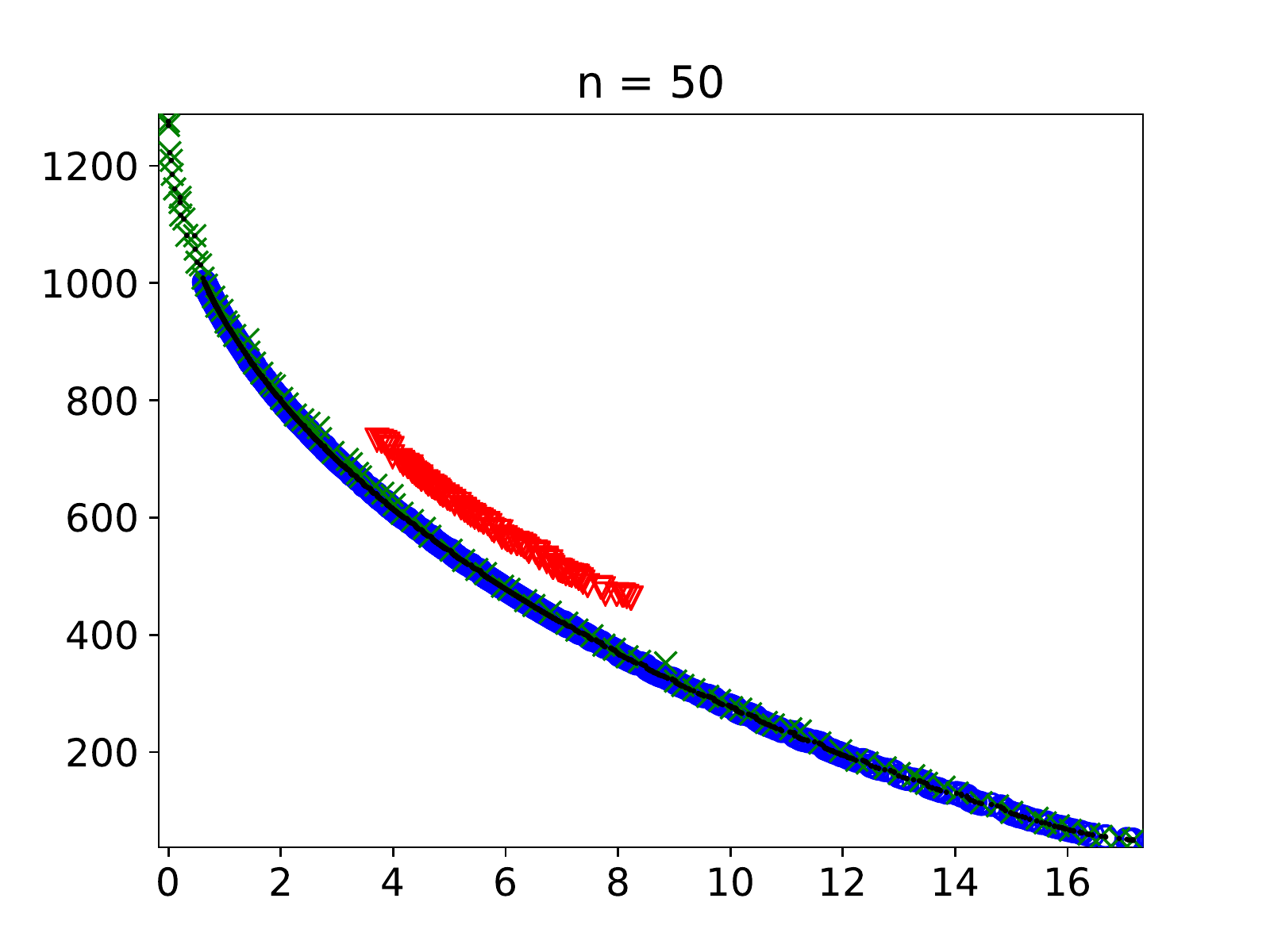}}
	\hfil
	\subfloat[]{\includegraphics[width=2.25in]{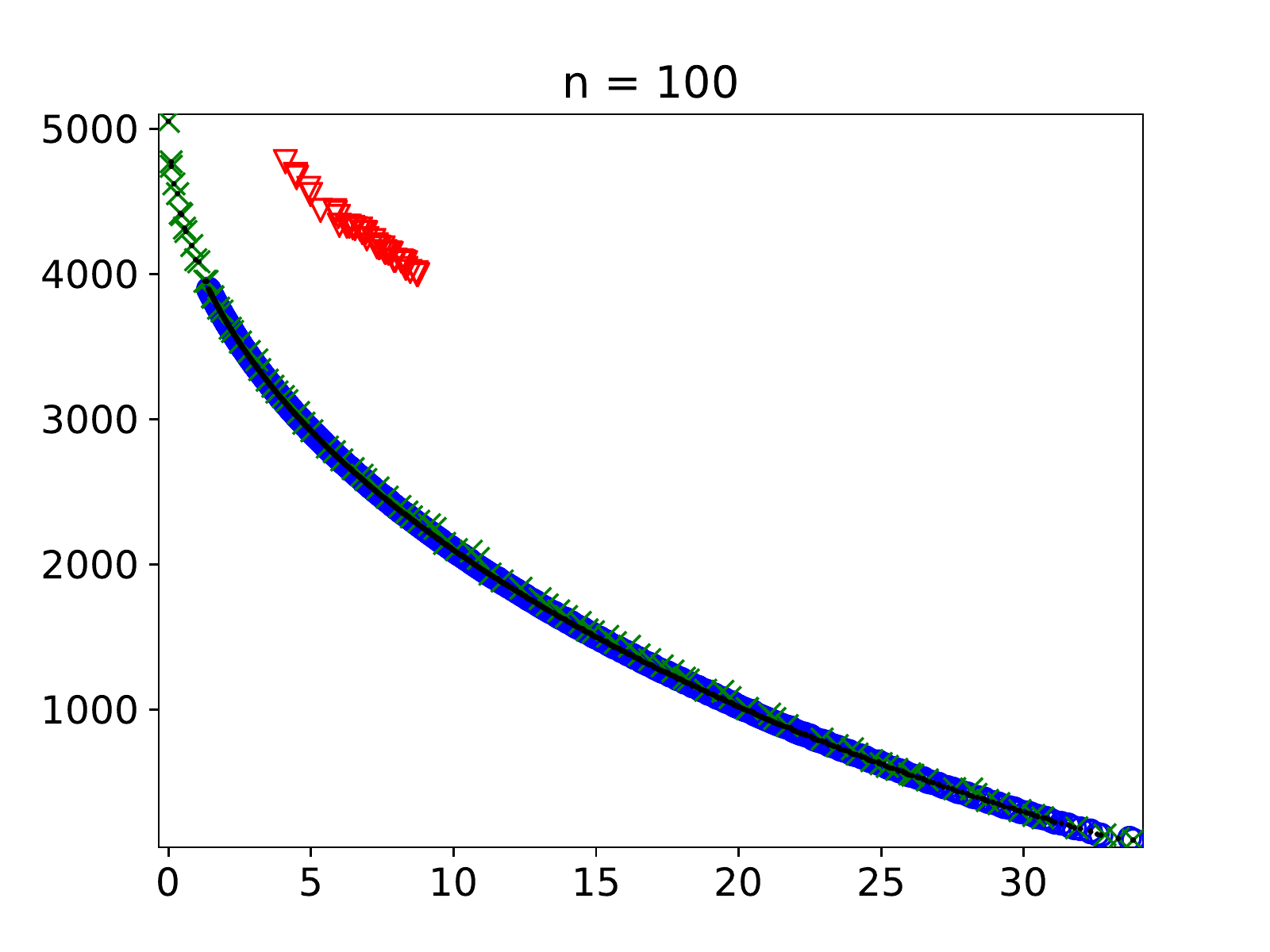}}
	\caption{Approximation of the Pareto front of the convex MAN problem at different dimensionalities, retrieved by \texttt{NSMA, FPGA, NSGA-II} and \texttt{DMS} (for interpretation of the references to color in text, the reader is referred to the electronic version of the article). (a) $n = 5$. (b) $n = 20$. (c) $n = 50$. (d) $n = 100$.}
	\label{fig::All - MAN}
\end{figure*}

\begin{table}
	\centering
	\renewcommand{\arraystretch}{1.15}
	\caption{Metrics values achieved by the four algorithms (\texttt{NSMA, FPGA, NSGA-II} and \texttt{DMS}) on the convex MAN problem for $n = 5, 20, 50, 100$. The values marked in bold are the best obtained on a specific problem.}
	\label{tab::All - MAN}
	\begin{tabular}{|c||c||c|c|c|c|}%
		\hline%
		$\texttt{n}$&\texttt{METRIC}&\texttt{NSMA}&\texttt{FPGA}&\texttt{NSGA{-}II}&\texttt{DMS}\\%
		\hline%
		\hline%
		\multirow{4}{*}{5}&\textit{ND{-}points}&9&1107&2&\textbf{5291}\\%
		&\textit{purity}&0.09&0.954&0.02&\textbf{0.994}\\%
		&$\Gamma$\textit{--spread}&0.433&1.839&0.81&\textbf{0.006}\\%
		&$\Delta$\textit{--spread}&\textbf{0.597}&1.928&0.69&1.003\\%
		\hline
		\multirow{4}{*}{20}&\textit{ND{-}points}&59&\textbf{2649}&0&0\\%
		&\textit{purity}&0.59&\textbf{0.99}&0.0&0.0\\%
		&$\Gamma$\textit{--spread}&\textbf{6.535}&68.252&51.601&9953.2\\%
		&$\Delta$\textit{--spread}&\textbf{0.558}&1.541&0.775&N/A\\%
		\hline
		\multirow{4}{*}{50}&\textit{ND{-}points}&30&\textbf{2776}&0&0\\%
		&\textit{purity}&0.3&\textbf{0.998}&0.0&0.0\\%
		&$\Gamma$\textit{--spread}&\textbf{46.41}&273.518&543.112&9790.081\\%
		&$\Delta$\textit{--spread}&\textbf{0.509}&1.222&0.893&N/A\\%
		\hline
		\multirow{4}{*}{100}&\textit{ND{-}points}&22&\textbf{2258}&0&0\\%
		&\textit{purity}&0.22&\textbf{0.999}&0.0&0.0\\%
		&$\Gamma$\textit{--spread}&\textbf{278.269}&1154.101&3894.709&9878.008\\%
		&$\Delta$\textit{--spread}&\textbf{0.521}&0.952&1.005&N/A\\%
		\hline%
	\end{tabular}%
\end{table}

The results for the MAN problem are shown in Figure \ref{fig::All - MAN} and Table~\ref{tab::All - MAN}. For $n = 5$, \texttt{DMS} turned out to be the best algorithm in all the metrics except for the $\Delta$\textit{--spread}. Only the \texttt{FPGA} algorithm obtained a similar \textit{purity}. However, observing the plot, the points produced by the \texttt{NSMA} algorithm seem to be near to those obtained by \texttt{DMS} and \texttt{FPGA}. We hence deduce that the latter algorithms produced only slightly better points. 

Furthermore, in this problem \texttt{NSMA} outperformed the competitors in terms of $\Delta$\textit{--spread}. Indeed, our method managed to achieve an uniform Pareto front, as opposed to \texttt{FPGA} that produced most of the points in restricted areas of the objectives space. 

As the value of $n$ increases, the \texttt{DMS} performance gets worse and \texttt{NSMA} outperforms it w.r.t.\ all the metrics. In particular, in these cases our method turned out to be the best in terms of the \textit{spread} metrics. For large values of $n$, \texttt{DMS} produced only a single point that is also dominated (it is not observable in the figure since it is too far from the reference front). The $\Delta$\textit{--spread} metric is not available for \texttt{DMS} in these cases, since it requires at least two points to be returned. 

The performance of \texttt{NSGA-II} is rather poor, regardless the value of $n$. Arguably, this result can be attributed to the aforementioned \texttt{NSGA-II} performance slowdown occurring on problems characterized by a particularly large feasible sets (Section \ref{subsubsec::getsurrogatebounds}). Furthermore, as also commented in Section \ref{subsec::NSGA-II-vs-FPGA}, in the MAN problem \texttt{NSGA-II} struggles to explore the extreme regions of the objectives space. In this context, \texttt{NSMA} particularly exploited the surrogate bounds, the constrained steepest descent directions and the optimization of the points with high crowding distance. The constrained steepest descent directions also allowed \texttt{FPGA} to be the best algorithm in terms of \textit{purity} overall. However, this method poorly performed regarding the \textit{spread} metrics. Finally, for great values of $n$, our approach and \texttt{FPGA} are the only algorithms whose \textit{purity} values are not equal to 0. Only \texttt{NSMA} managed to obtain points near to the ones of \texttt{FPGA}.

\begin{figure*}[!t]
	\centering
	\subfloat[]{\includegraphics[width=2.25in]{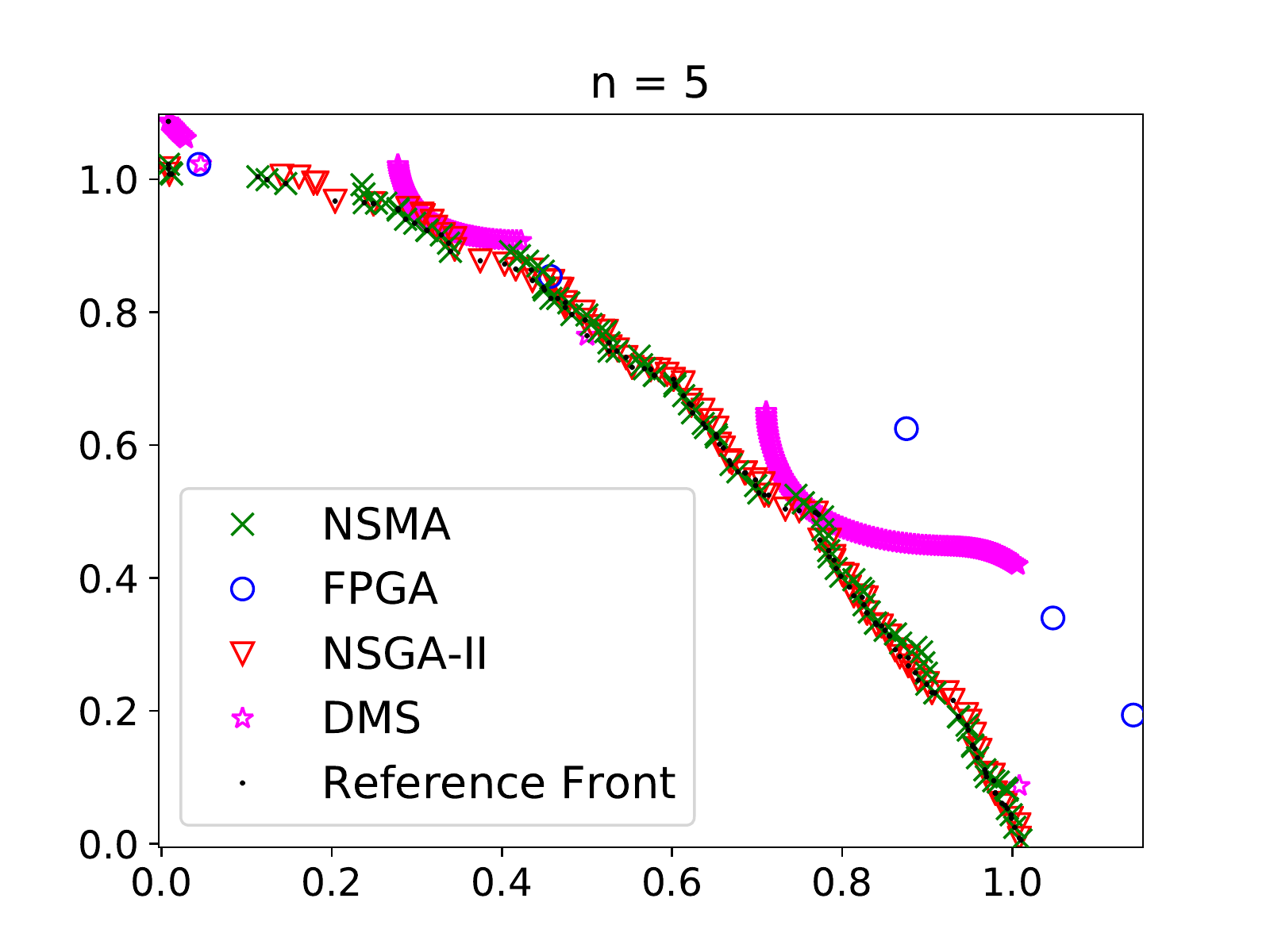}}
	\hfil
	\subfloat[]{\includegraphics[width=2.25in]{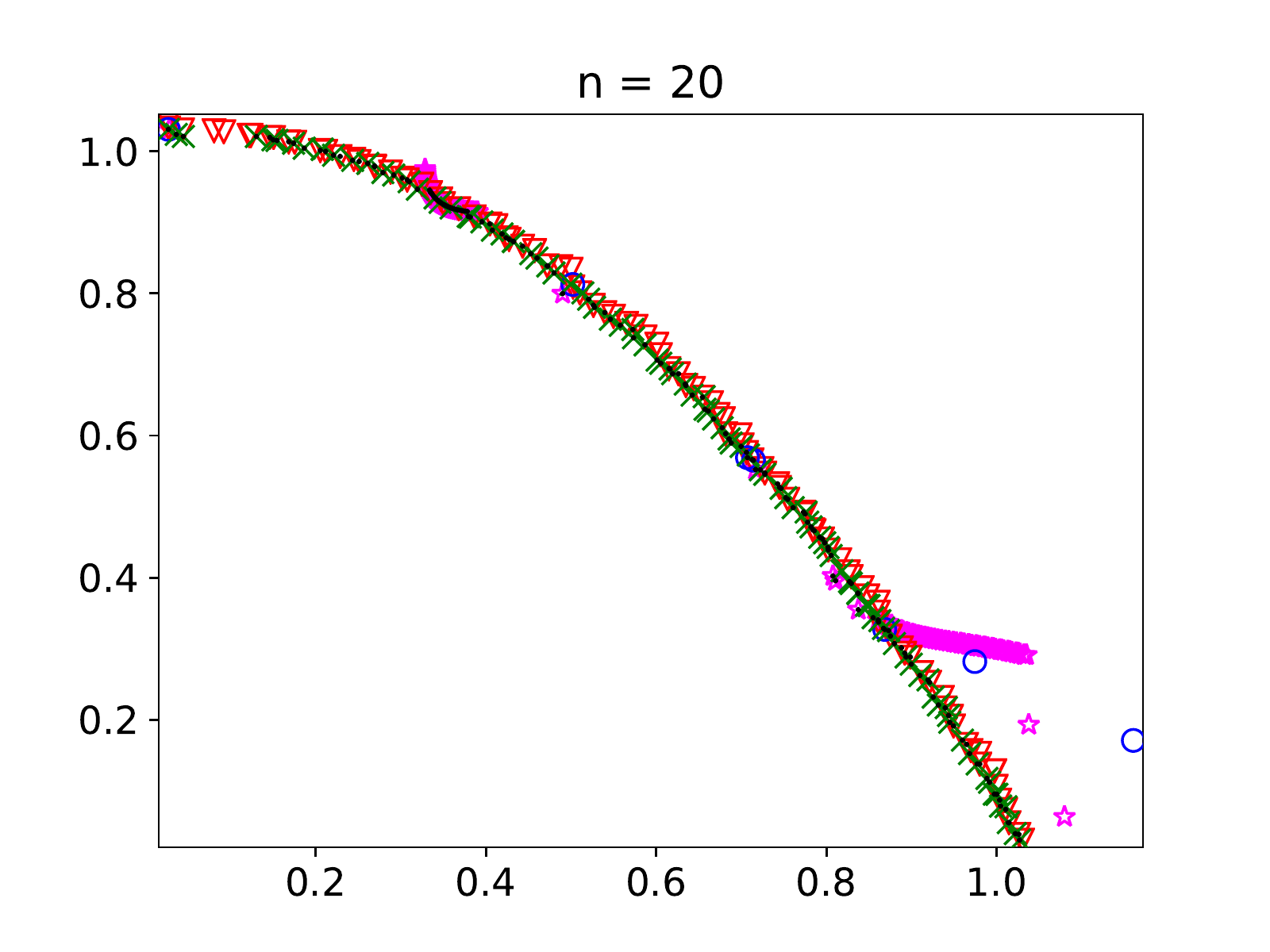}}
	\\
	\subfloat[]{\includegraphics[width=2.25in]{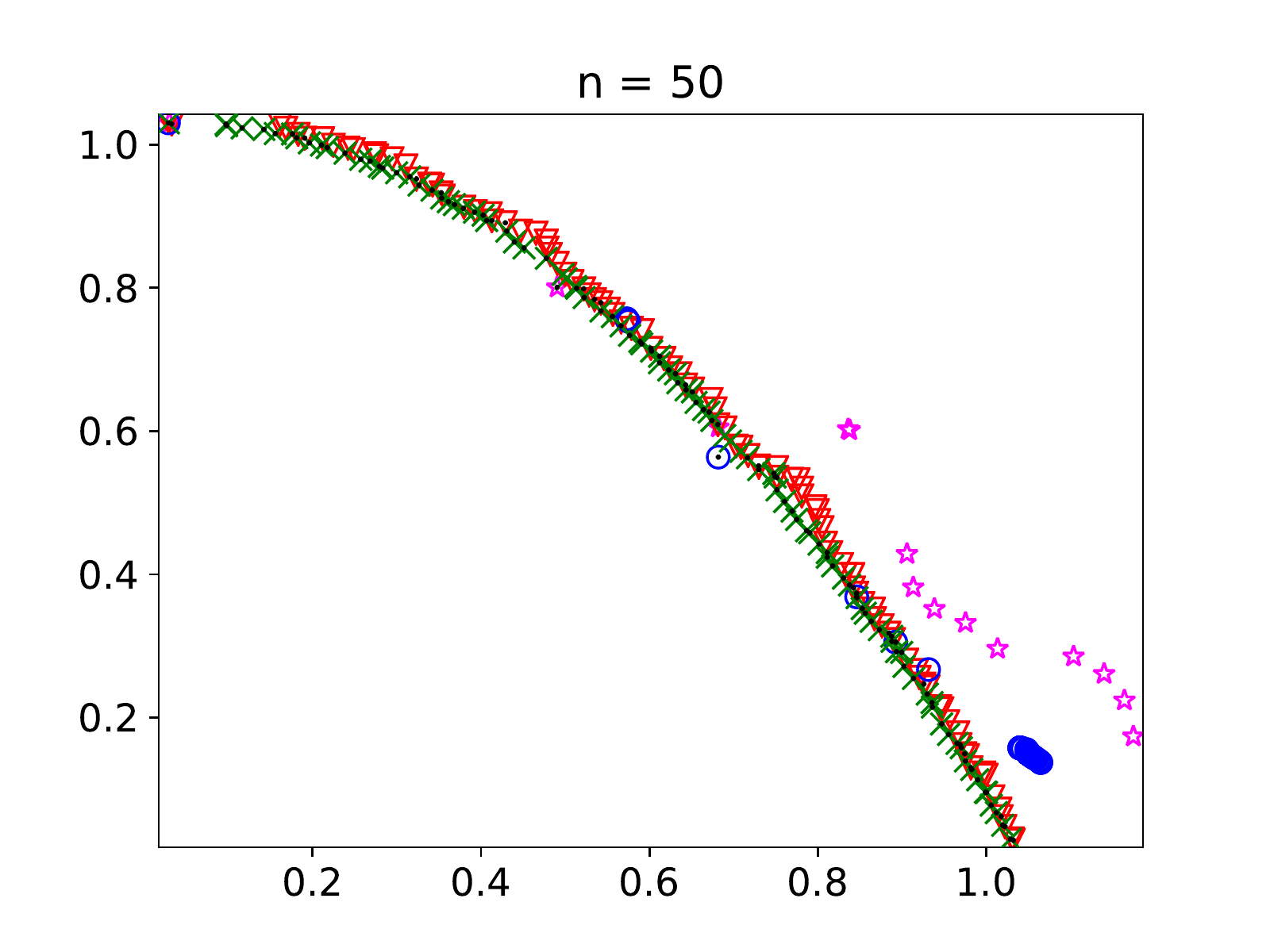}}
	\hfil
	\subfloat[]{\includegraphics[width=2.25in]{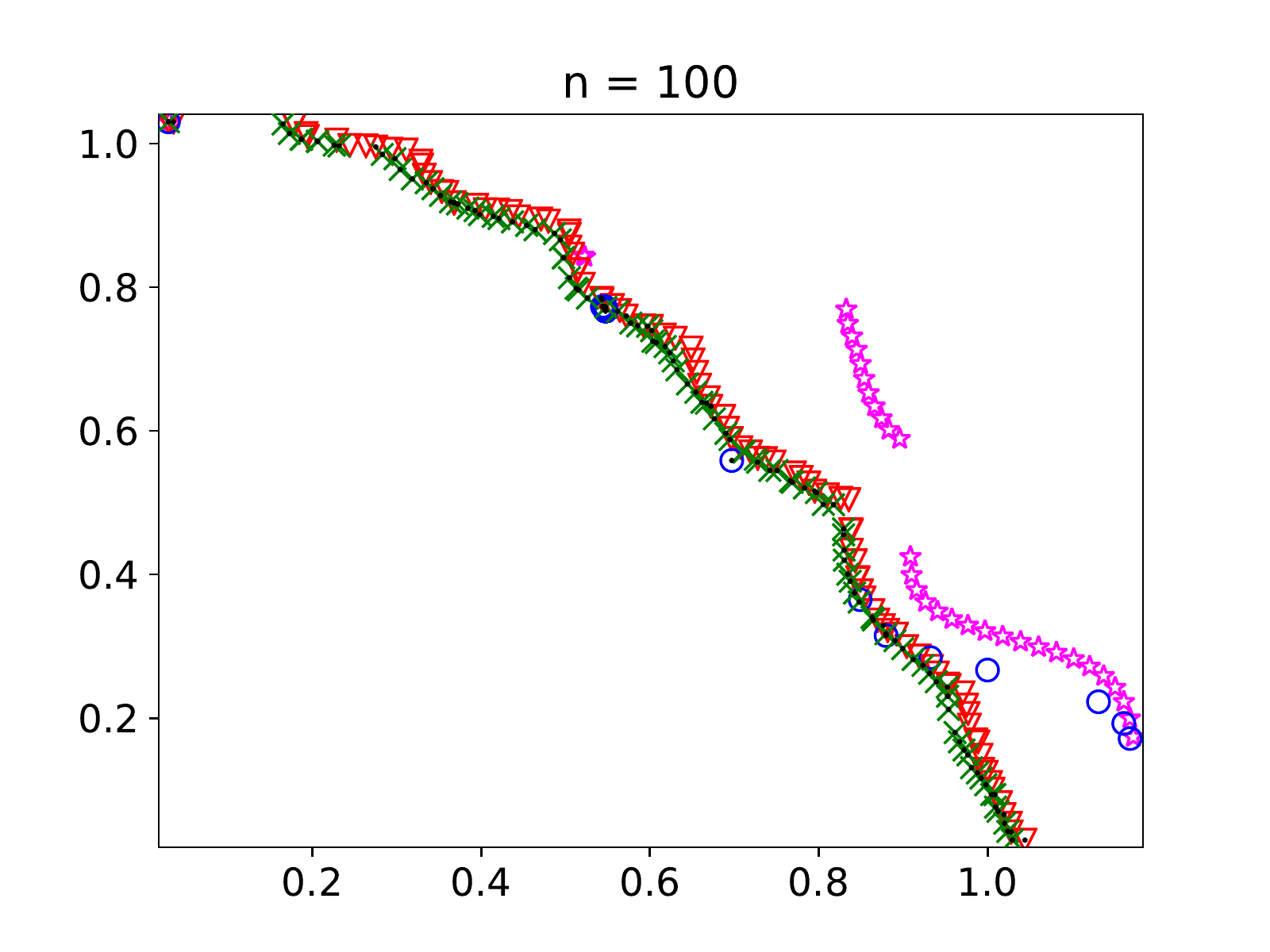}}
	\caption{Approximation of the Pareto front of the nonconvex CEC09\_4 problem at different dimensionalities, retrieved by \texttt{NSMA, FPGA, NSGA-II} and \texttt{DMS} (for interpretation of the references to color in text, the reader is referred to the electronic version of the article). (a) $n = 5$. (b) $n = 20$. (c) $n = 50$. (d) $n = 100$.}
	\label{fig::All - CEC09_4}
\end{figure*}

\begin{table}
	\centering
	\renewcommand{\arraystretch}{1.15}
	\caption{Metrics values achieved by the four algorithms (\texttt{NSMA, FPGA, NSGA-II} and \texttt{DMS}) on the nonconvex CEC09\_4 problem for $n = 5, 20, 50, 100$. The values marked in bold are the best obtained on a specific problem.}
	\label{tab::All - CEC09_4}
	\begin{tabular}{|c||c||c|c|c|c|}%
		\hline%
		$\texttt{n}$&\texttt{METRIC}&\texttt{NSMA}&\texttt{FPGA}&\texttt{NSGA{-}II}&\texttt{DMS}\\%
		\hline%
		\hline%
		\multirow{4}{*}{5}&\textit{ND{-}points}&\textbf{64}&0&56&4\\%
		&\textit{purity}&\textbf{0.64}&0.0&0.56&0.024\\%
		&$\Gamma$\textit{--spread}&\textbf{0.101}&0.419&0.132&0.333\\%
		&$\Delta$\textit{--spread}&0.801&\textbf{0.579}&0.714&1.193\\%
		\hline
		\multirow{4}{*}{20}&\textit{ND{-}points}&\textbf{86}&4&49&30\\%
		&\textit{purity}&\textbf{0.86}&0.571&0.49&0.234\\%
		&$\Gamma$\textit{--spread}&0.086&0.475&\textbf{0.037}&0.3\\%
		&$\Delta$\textit{--spread}&0.546&0.68&\textbf{0.537}&1.613\\%
		\hline
		\multirow{4}{*}{50}&\textit{ND{-}points}&\textbf{94}&4&24&1\\%
		&\textit{purity}&\textbf{0.94}&0.025&0.24&0.071\\%
		&$\Gamma$\textit{--spread}&\textbf{0.068}&0.544&0.128&0.461\\%
		&$\Delta$\textit{--spread}&\textbf{0.498}&1.845&0.618&0.956\\%
		\hline
		\multirow{4}{*}{100}&\textit{ND{-}points}&96&\textbf{109}&13&0\\%
		&\textit{purity}&\textbf{0.96}&0.948&0.13&0.0\\%
		&$\Gamma$\textit{--spread}&\textbf{0.136}&0.514&0.143&0.491\\%
		&$\Delta$\textit{--spread}&\textbf{0.641}&1.757&0.701&1.248\\%
		\hline%
	\end{tabular}%
\end{table}

Regarding the CEC09\_4 problem, whose results are reported in Figure \ref{fig::All - CEC09_4} and Table~\ref{tab::All - CEC09_4}, \texttt{NSMA} was the algorithm with the best overall performance: it generally obtained better metrics values than its most important competitors (\texttt{FPGA} and \texttt{NSGA-II}). Here, the combination of genetic operations and constrained steepest descent directions was greatly helpful to obtain remarkable results. Indeed, the independent use of only one of these two approaches did not lead to the same performance. In this problem, the \texttt{DMS} algorithm performed poorly regardless the value for $n$.

In conclusion, \texttt{NSMA} can be considered a viable option with convex problems, both in the low and the high dimensional cases. At the same time, our approach did not suffer with non-convex problems, as opposed to \texttt{FPGA}. On the contrary, it also outperformed \texttt{NSGA-II}, which is known to be a particularly suitable algorithm to use in these cases but struggles as the dimensionality of the problem grows.

\subsection{Overall comparison}
\label{subsec::performance-profiles}

In this last section of computational experiments, we provide the performance profiles for the four considered algorithms on the entire benchmark of problems, listed in Table~\ref{tab::problems}. The profiles are shown in Figure \ref{fig::PP_2m}.

\begin{figure*}[!t]
	\centering
	\subfloat[]{\includegraphics[width=2in]{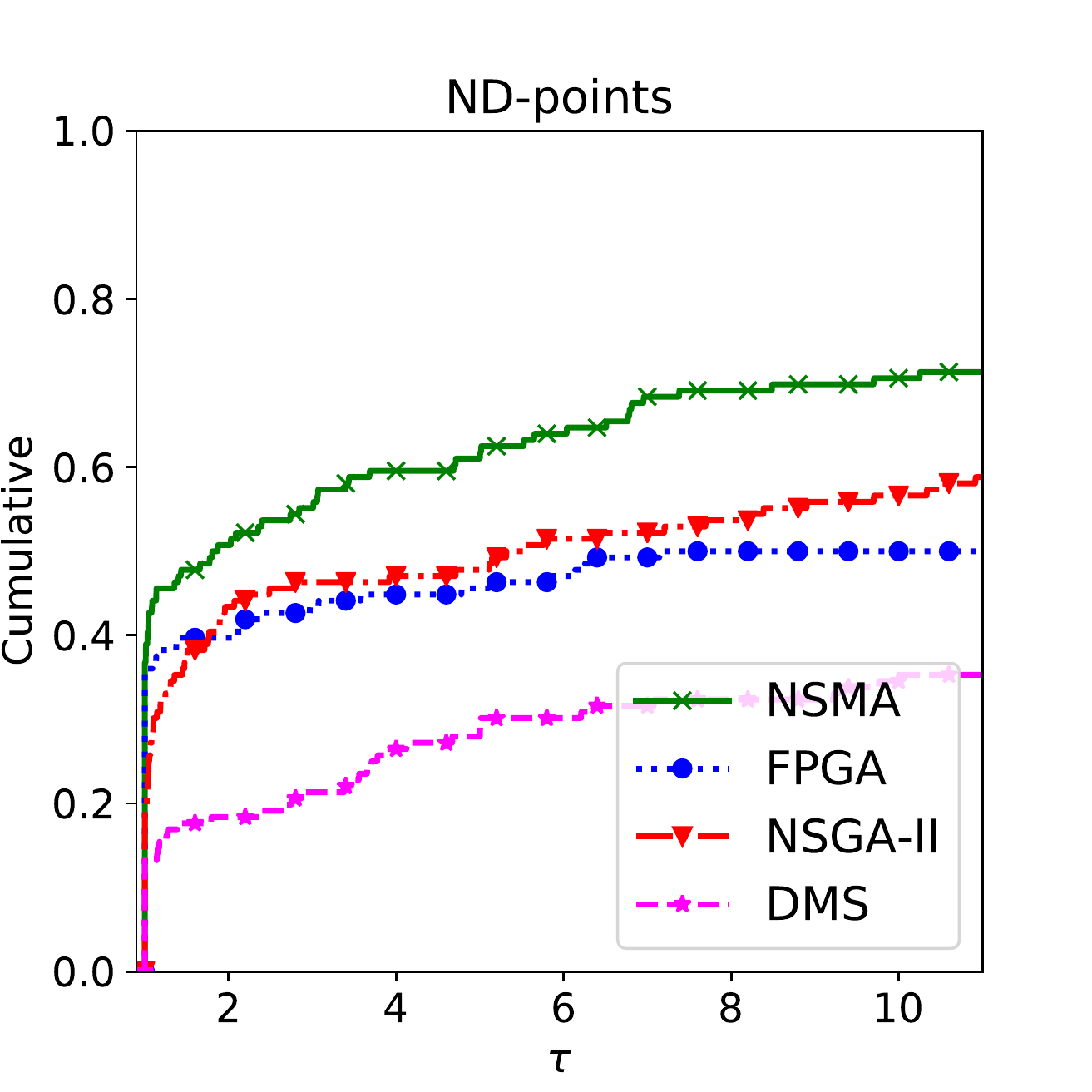}}
	\hfil
	\subfloat[]{\includegraphics[width=2in]{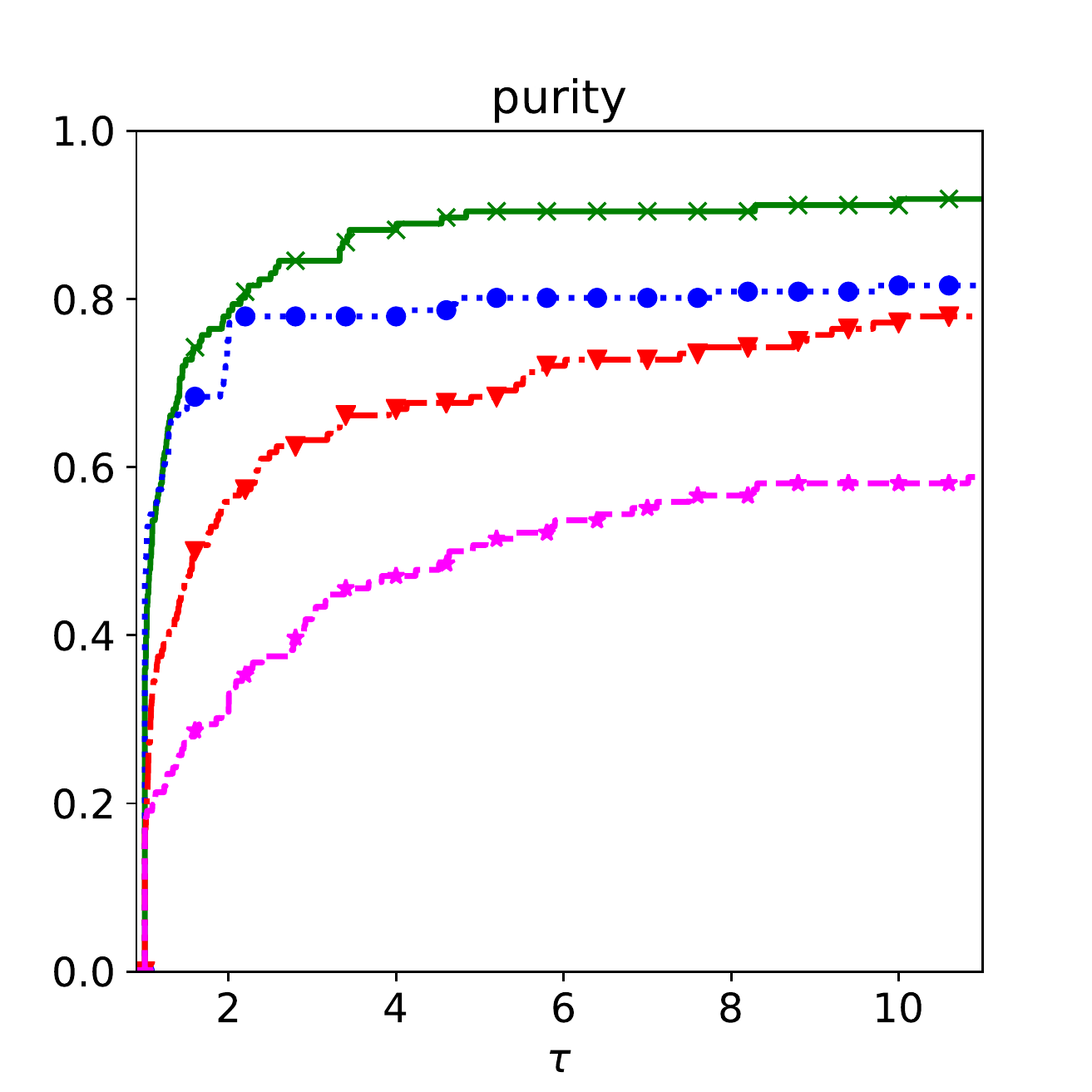}}
	\\
	\subfloat[]{\includegraphics[width=2in]{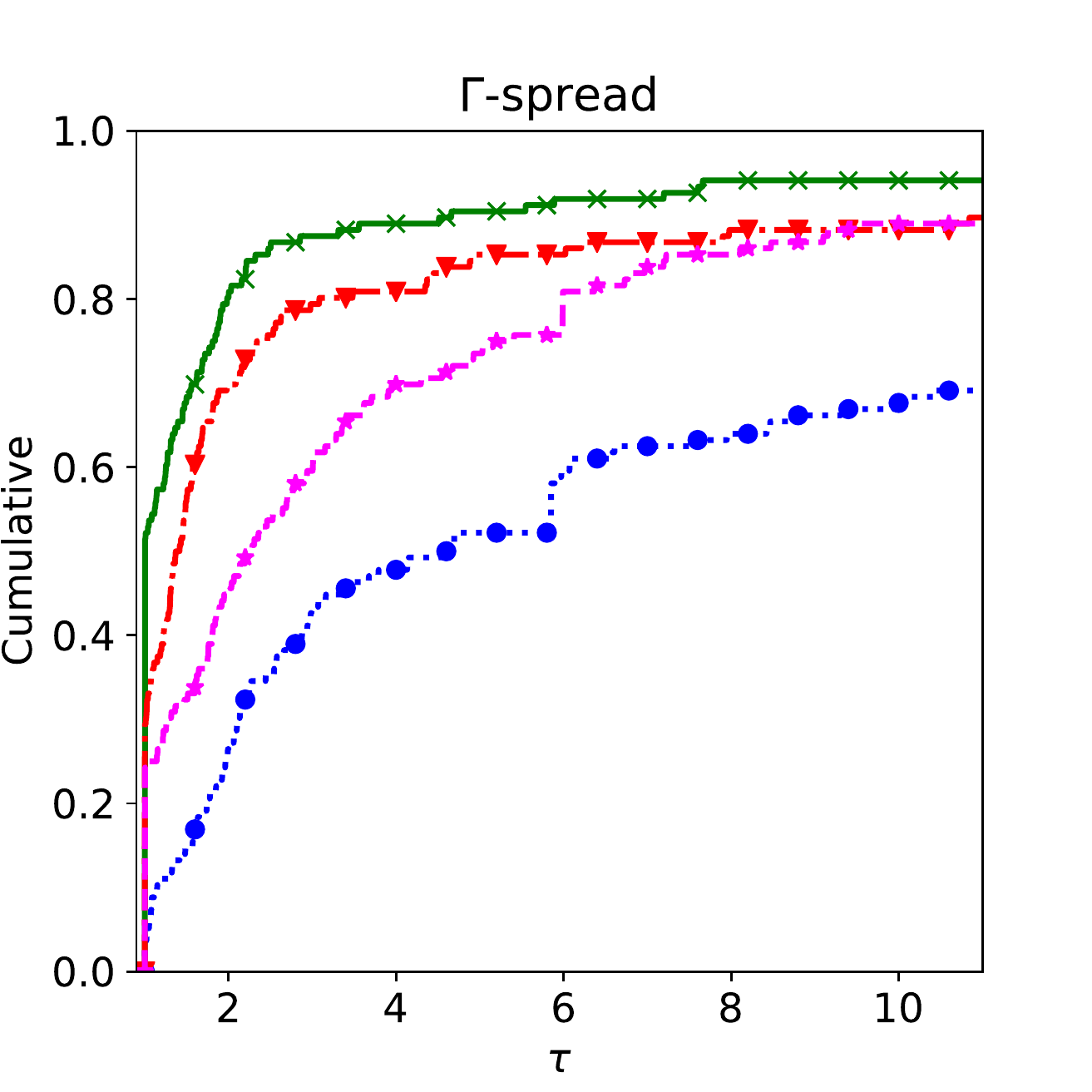}}
	\hfil
	\subfloat[]{\includegraphics[width=2in]{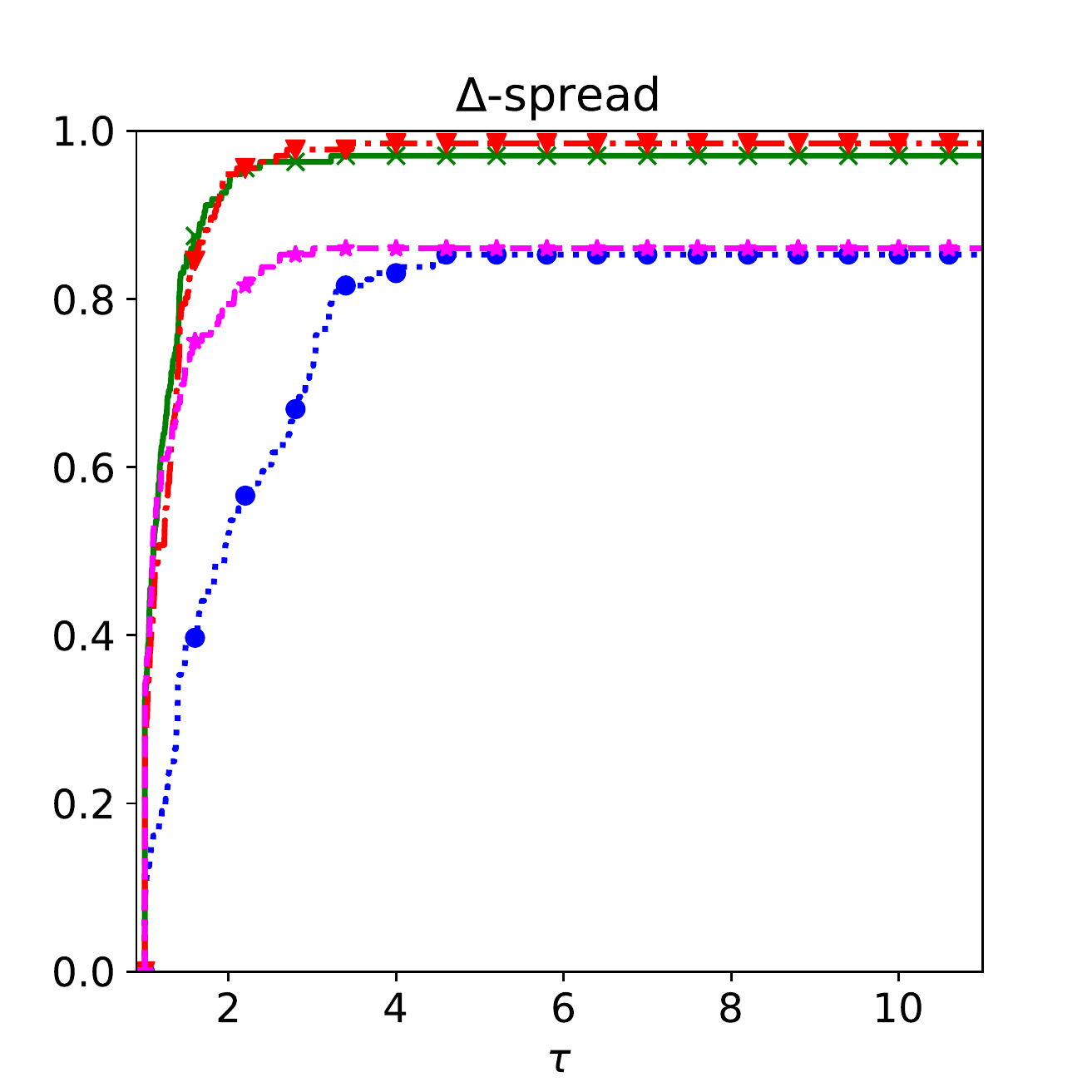}}
	\caption{Performance profiles for the \texttt{NSMA, FPGA, NSGA-II} and \texttt{DMS} algorithms on the CEC, ZDT, MOP and MAN problems, run with a time limit of 2 minutes (for interpretation of the references to color in text, the reader is referred to the electronic version of the article). (a) \textit{ND-points}. (b) \textit{purity}. (c) $\Gamma$\textit{--spread}. (d) $\Delta$\textit{--spread}.}
	\label{fig::PP_2m}
\end{figure*}

The performance profiles remark once again the benefits of using our proposed approach. Regarding the \textit{ND-points}, \texttt{NSMA} proved to be the most robust algorithm. This result was not obvious: we remind that our method, as opposed to \texttt{FPGA} and \texttt{DMS}, considers a fixed number of solutions in the population. 

Another interesting result is related to the \textit{purity} metric: \texttt{NSMA} is again the clear winner. In problems with complicated objective functions, local optimization of points in the \texttt{NSMA} mechanisms could result in a waste of computational time. From the results, however, we deduce that the converse is true: the combined use of constrained steepest descent directions and genetic operations allowed \texttt{NSMA} to achieve the best performance. 

The proposed method also outperformed the other ones in terms of $\Gamma$\textit{--spread}, while its performance is very similar to the one of \texttt{NSGA-II} in terms of $\Delta$\textit{--spread}. 
We can conclude that our approach is able to effectively obtain spread and uniform Pareto front approximations. At the same time, we deduce that the same cannot be said for the \texttt{FPGA}, which turned out to be the worst method w.r.t.\ the \textit{spread} metrics. However, the descent-based algorithm was the second best in terms of \textit{purity}, outperforming \texttt{NSGA-II}. 
In general, \texttt{DMS} was not effective overall on the considered benchmark.

\begin{figure*}[!t]
	\centering
	\subfloat[]{\includegraphics[width=2in]{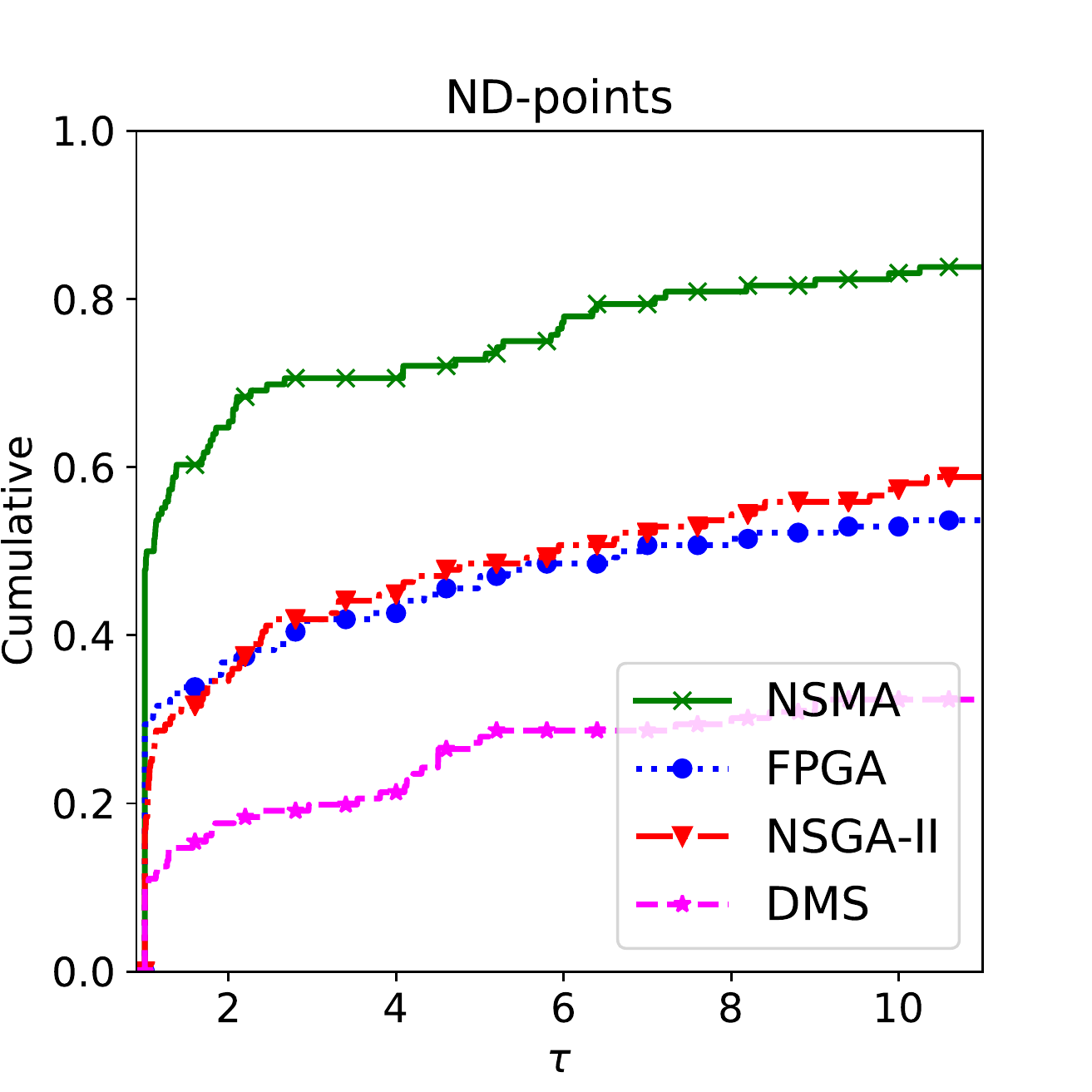}}
	\hfil
	\subfloat[]{\includegraphics[width=2in]{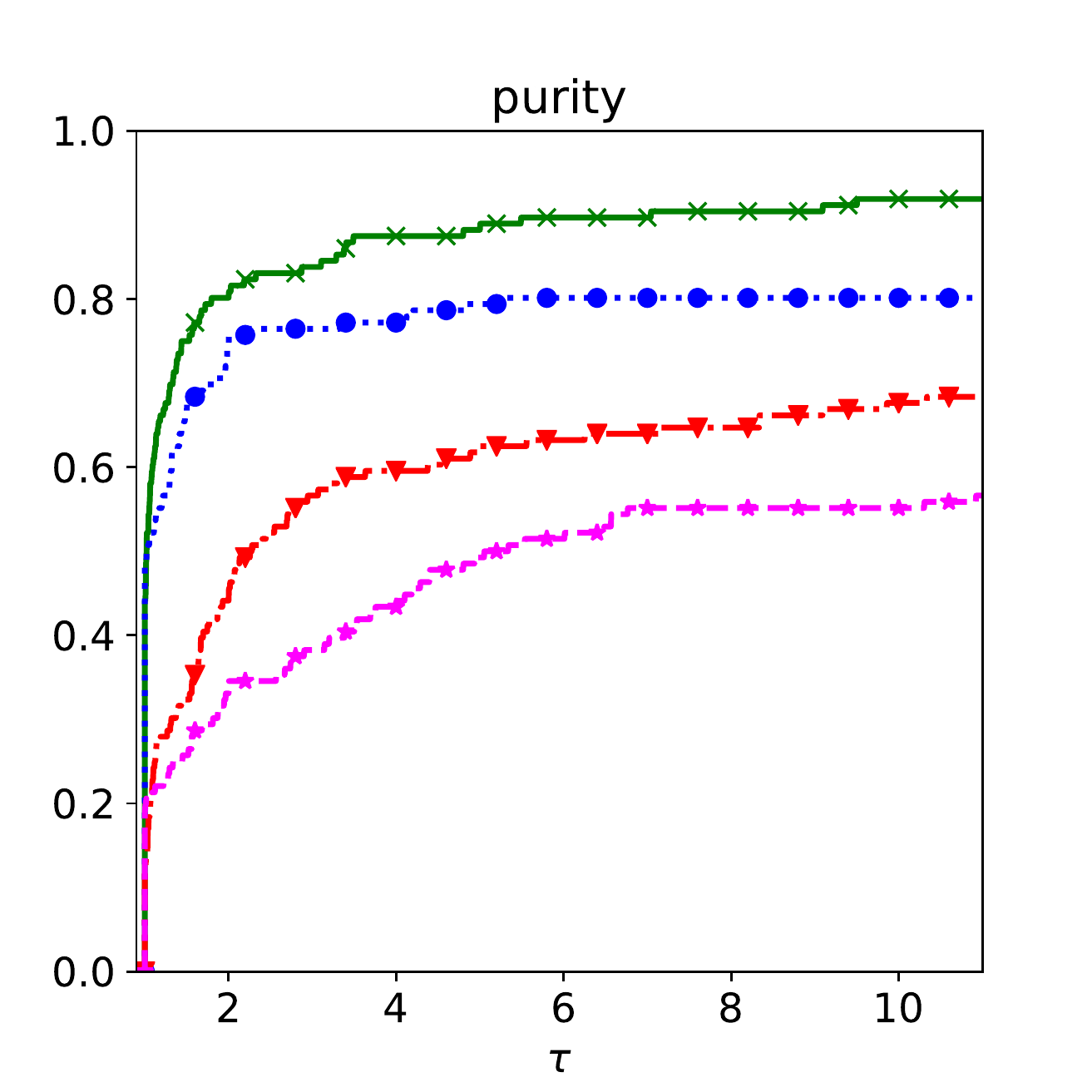}}
	\\
	\subfloat[]{\includegraphics[width=2in]{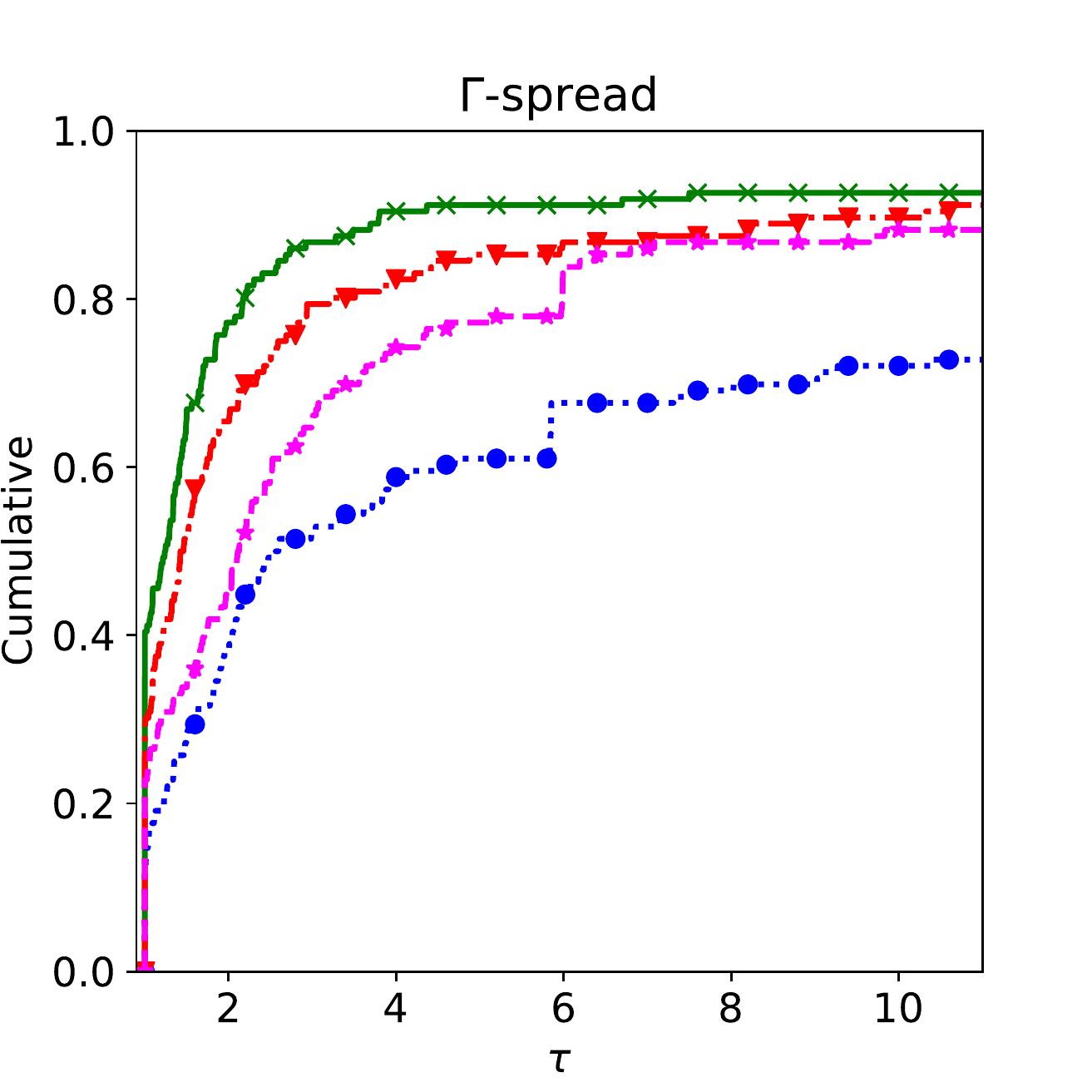}}
	\hfil
	\subfloat[]{\includegraphics[width=2in]{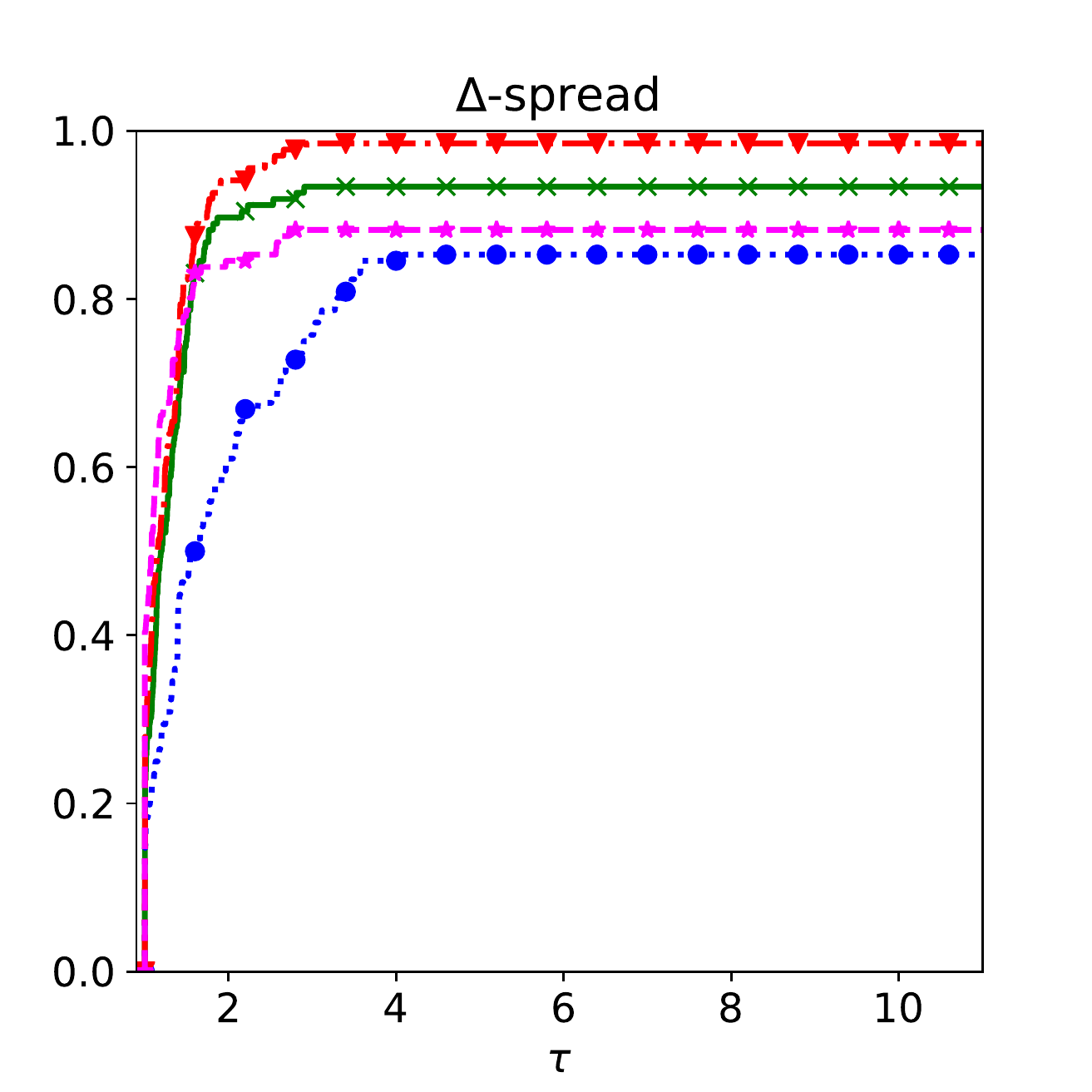}}
	\caption{Performance profiles for the \texttt{NSMA, FPGA, NSGA-II} and \texttt{DMS} algorithms on the CEC, ZDT, MOP and MAN problems, run with a time limit of 30 seconds (for interpretation of the references to color in text, the reader is referred to the electronic version of the article). (a) \textit{ND-points}. (b) \textit{purity}. (c) $\Gamma$\textit{--spread}. (d) $\Delta$\textit{--spread}.}
	\label{fig::PP_30s}
\end{figure*}

Lastly, we tested the four algorithms considering a time limit of 30 seconds for the experiments: the results can be seen in Figure \ref{fig::PP_30s}. Our aim is to observe the effectiveness of the methods at the first iterations. 

Considering the \textit{ND-points} and the \textit{purity} metrics, we observe that the differences between our approach and the other algorithms are now even clearer, while the situation is not changed in terms of $\Gamma$\textit{--spread}. Regarding the $\Delta$\textit{--spread} metric, \texttt{NSGA-II} was more effective than the other algorithms. However, our method was still competitive, as in terms of this metric it was the second most robust algorithm and it outperformed \texttt{FPGA} and \texttt{DMS}. We can conclude that \texttt{NSMA} turned out to be also effective considering a smaller time limit: from the very first iterations, our approach was capable to obtain good, wide and uniform Pareto front approximations. 
\section{Conclusions}
\label{sec::conclusions}

In this paper, we considered smooth multi-objective optimization problems subject to bound constraints. After a review of the existing literature, we listed and commented the main state-of-the-art approaches designed to approximate the Pareto front of such problems, along with their benefits and drawbacks. In particular, we focused on \texttt{NSGA-II} \cite{deb2002fast}, which is the most popular genetic algorithm, and on \texttt{FPGA}, which is a variant of the gradient-based descent method introduced in \cite{cocchi2020convergence}, capable of handling bound constraints. A detailed definition of \texttt{FPGA}, along with convergence properties, is provided in Appendix \ref{app::FPGA}. In a preliminary study, we compared these two algorithms trying to emphasize their strengths and weaknesses.

We then focused on the design of a memetic algorithm, whose aim is to combine the good features of both the aforementioned algorithms. We call this new method \textit{Non-dominated Sorting Memetic Algorithm} (\texttt{NSMA}). In this procedure, we exploit the genetic operations of \texttt{NSGA-II} and the tools typical of gradient-based descent methods, such as the steepest descent directions and line searches. In particular,
we employ a new descent method, called \textit{Front Multi-Objective Projected Gradient} (\texttt{FMOPG}), which is a front-based variant of the original \texttt{MOPG} firstly introduced in \cite{drummond2004projected}. For \texttt{FMOPG}, we proved properties of convergence to Pareto stationarity for the sequence of produced points. 

Moreover, results of thorough computational experiments in which we compared our method with main state-of-the-art algorithms, including \texttt{FPGA} and \texttt{NSGA-II}, are provided. These results show that \texttt{NSMA} can consistently outperform its competitors in terms of popular metrics for multi-objective optimization. Our approach turned out to be highly effective in any considered setting.

\bibliographystyle{abbrv}

\renewcommand{\theequation}{A\arabic{equation}}
\setcounter{equation}{0}

\renewcommand{\thetable}{C\Roman{table}}
\setcounter{table}{0}
\renewcommand{\theHtable}{Supplement\thetable}

\renewcommand{\theproposition}{A\arabic{proposition}}
\setcounter{proposition}{0}
\renewcommand{\theassumption}{A\arabic{assumption}}
\setcounter{assumption}{0}

\renewcommand{\thealgocf}{A\arabic{algocf}}
\setcounter{algocf}{0}

{\appendices
	\section{The Front Projected Gradient Algorithm}
\label{app::FPGA}

In this appendix, we describe the adaptation of the \texttt{FSDA} algorithm \cite{cocchi2020convergence} to box-constrained optimization problems, which we call \textit{Front Projected Gradient Algorithm} (\texttt{FPGA}). We initially report the scheme of the new adaptation. Then, in the remainder of the appendix, we provide a rigorous theoretical analysis.

\subsection{Algorithmic scheme}
\label{subapp::algorithmic-scheme}

We report the scheme of \texttt{FPGA} in Algorithm \ref{alg::FPGA}. 

\begin{algorithm}[h]
	\caption{Front Projected Gradient Algorithm} \label{alg::FPGA}
	Input: $F:\mathbb{R}^n \rightarrow \mathbb{R}^m$, $\Omega$ feasible closed and convex set, $X_0$ set of feasible non-dominated points w.r.t.\ $F$. \\
	$k = 0$\\
	\While{a stopping criterion is not satisfied}{
		$\hat{X}_k = X_k$ \\
		\For{$c = 1,\ldots, |X_k|$}{
			\If{$x_c \in \hat{X}_k$ \textbf{and} $\exists I \subseteq \{1,\ldots, m\}$ such that 
				\begin{itemize}
					\item $x_c \in \hat{X}_k^I$
					\item $\theta_\Omega^I(x_c) < 0$
			\end{itemize}}{
				Let $d_{\Omega c}^I \in v_\Omega^I(x_c)$ be the direction associated with $\theta_\Omega^I(x_c)$ \\
				$\alpha$ = \texttt{B-FALS}($F(\cdot), \Omega, I, \hat{X}_k^I, x_c, d_{\Omega c}^I, \theta_\Omega^I(x_c)$) \\
				$\hat{X}_k = \hat{X}_k \setminus \{y \in \hat{X}_k | F(x_c + \alpha d_{\Omega c}^I) \lneqq F(y)\} \cup \{x_c + \alpha d_{\Omega c}^I\}$\label{line::point-insert}
			}
			\Else{
				\If{$x_c \in \hat{X}_k$ (i.e.\ $x_c$ has not been filtered out by previous inner iterations)}{
					$x_c$ is a Pareto-stationary point w.r.t.\ $F$ for Problem \eqref{eq::mo-prob}.
				}
			}
		}
		$X_{k + 1} = \hat{X}_k$ \\
		$k = k + 1$
	}
	\Return $X_k$
\end{algorithm}

\noindent With respect to the \texttt{FSDA} algorithm, there are two major differences.

\begin{itemize}
	\item For the descent directions, we solve instances of Problem \eqref{eq::proj-desc}, while in the original implementation Problem \eqref{eq::ste-com-desc} is considered.
	\item We employ \texttt{B-FALS} (Algorithm \ref{alg::B-FALS}) instead of \texttt{FALS} (Algorithm \ref{alg::FALS}) in order to handle bound constraints.
\end{itemize}

In the remainder of the appendix, we provide some \texttt{FPGA} properties.

\subsection{Algorithm analysis}
\label{subapp::algorithm-analysis}

In this subsection, we provide a formal analysis of the \texttt{FPGA} algorithm from a theoretical perspective.

We first prove the feasibility of the points produced by the algorithm.

\begin{proposition}
	Let $\{X_k\}$ be the sequence of sets of points generated by \texttt{FPGA}. Then, for all $k$, every point $x_c$ in the set $X_k$ is feasible for Problem \eqref{eq::mo-prob}.
\end{proposition}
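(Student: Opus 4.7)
The plan is to proceed by induction on the iteration counter $k$. For the base case $k=0$, feasibility of every point in $X_0$ is granted by the input specification of the algorithm, which explicitly requires $X_0$ to be a set of feasible non-dominated points with respect to $F$.

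For the inductive step, assume that every point in $X_k$ is feasible and show the same holds for $X_{k+1}$. At the beginning of iteration $k$ we set $\hat{X}_k = X_k$, so $\hat{X}_k$ is initially a subset of $\Omega$. During the inner loop, $\hat{X}_k$ can change only at Line \ref{line::point-insert}, where existing points are either discarded (which preserves feasibility trivially) or replaced by the new point $x_c + \alpha d_{\Omega c}^I$. It therefore suffices to argue that each such newly inserted point lies in $\Omega$. Since the inner iteration is entered only when $x_c \in \hat{X}_k^I$ and $\theta_\Omega^I(x_c) < 0$, and since by the inductive hypothesis (together with the fact that $\hat{X}_k$ has only been modified by removing points up to this step) the point $x_c$ is feasible, the hypotheses of Proposition \ref{prop::fin-ter-B-FALS} are satisfied. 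That proposition guarantees that \texttt{B-FALS} terminates with a step size $\alpha > 0$ such that $x_c + \alpha d_{\Omega c}^I \in \Omega$, which is exactly what is needed.

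Once the inner loop of iteration $k$ is complete, the new iterate set is defined as $X_{k+1} = \hat{X}_k$, and by the argument above $\hat{X}_k \subseteq \Omega$, closing the induction. The argument is essentially routine and the only non-trivial ingredient is Proposition \ref{prop::fin-ter-B-FALS}; the main point to be careful about is bookkeeping, namely that when we invoke the proposition for a generic inner iteration of index $c$, the candidate point $x_c$ currently in $\hat{X}_k$ is feasible, which follows because modifications to $\hat{X}_k$ within the current outer iteration only insert points already verified to lie in $\Omega$ and remove others. No further technical obstacle is anticipated.
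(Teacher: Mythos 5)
Your proof is correct and follows essentially the same route as the paper's: feasibility of the initial set, the observation that points enter $\hat{X}_k$ only at Line \ref{line::point-insert}, and the fact that the \texttt{B-FALS} stopping criterion (together with convexity of $\Omega$, feasibility of $d_{\Omega c}^I$ and the finite termination guaranteed by Proposition \ref{prop::fin-ter-B-FALS}) forces the inserted point $x_c + \alpha d_{\Omega c}^I$ to lie in $\Omega$. Your explicit induction and bookkeeping over the inner loop merely formalize what the paper states more tersely.
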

\begin{proof}
	The proof is straightforward. First of all, the initial set $X_0$ is composed by feasible points. New solutions are only added through Line \ref{line::point-insert}. Considering that $\Omega$ is convex and $d_{\Omega c}^I$ is a feasible direction by construction for any $I \subseteq \{1,\ldots, m\}$, and reminding the stopping criteria of \texttt{B-FALS}, these new points are contained in $\Omega$ and, therefore, they are feasible for Problem \eqref{eq::mo-prob}.
\end{proof}

In order to prove convergence properties, we need the concept of linked sequence (Definition \ref{def::link-seq}) and the following assumption.

\begin{assumption}
	\label{ass::FPGA-boundedness}
	Let $X_0$ be a set of feasible non-dominated points w.r.t.\ $F$. A point $x_0 \in X_0$ exists such that:
	\begin{itemize}
		\item $x_0$ is not Pareto-stationary w.r.t.\ $F$;
		\item the set $\mathcal{L}(x_0) = \bigcup_{j=1}^m\{x \in \Omega: f_j(x) \le f_j(x_0)\}$ is compact.
	\end{itemize}
\end{assumption}

This latter one is similar to Assumption 1 in \cite{cocchi2020convergence}. The difference is that, in this case, bound constraints must be also taken into account.

\begin{proposition}
	Let us assume that Assumption \ref{ass::FPGA-boundedness} holds. Let $\{X_k\}$ be the sequence of sets of non-dominated points w.r.t.\ $F$ produced by \texttt{FPGA}. Let $\{x_k\}$ be a linked sequence, then it admits limit points and every limit point is Pareto-stationary w.r.t.\ $F$.
\end{proposition}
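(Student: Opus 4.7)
The plan is to follow closely the template of Proposition \ref{prop::conv-lim-po} (the \texttt{FMOPG} convergence result) and of Proposition 5 in \cite{cocchi2020convergence}, with one additional bookkeeping step to handle the fact that different outer iterations of \texttt{FPGA} may choose different index subsets $I$ for the descent direction.

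First, I would establish boundedness of the linked sequence. Every $x_k$ is produced by a \texttt{B-FALS} step from $x_{k-1}$ (or coincides with it, in which case the claim is trivial). By Proposition \ref{prop::fin-ter-B-FALS} and Remark \ref{rem::non-dom-new-po}, chained back to $x_0$, for every $k$ there exists an index $j_k\in\{1,\ldots,m\}$ with $f_{j_k}(x_k)\le f_{j_k}(x_0)$, hence $x_k\in\mathcal L(x_0)$. Assumption \ref{ass::FPGA-boundedness} makes $\mathcal L(x_0)$ compact, so a subsequence $x_k\to\bar x$ exists along some $K\subseteq\{1,2,\ldots\}$.

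Next, I would argue by contradiction: assume $\bar x$ is not Pareto-stationary w.r.t.\ $F$, so $\theta_\Omega(\bar x)<0$. The elementary yet crucial observation is that for every $I\subseteq\{1,\ldots,m\}$, $\theta_\Omega^I(\bar x)\le\theta_\Omega(\bar x)<0$, because the inner maximum taken over a smaller index set can only decrease. Since $2^{\{1,\ldots,m\}}$ is finite, I would extract a subsequence $K'\subseteq K$ along which the subset chosen at outer iteration $k-1$ is constantly some $\hat I$; continuity of $\theta_\Omega^{\hat I}$ then yields $\bar\varepsilon>0$ with $\theta_\Omega^{\hat I}(x_{k-1})\le-\bar\varepsilon$ for $k\in K'$ large.

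From here the remainder of the proof is essentially a transcription of the argument of Proposition \ref{prop::conv-lim-po} applied with the fixed subset $\hat I$: the \texttt{B-FALS} termination condition and Corollary \ref{cor::non-dom-new-po} first force $\alpha_{k-1}\theta_\Omega^{\hat I}(x_{k-1})\to 0$ along $K'$, hence $\alpha_{k-1}\to 0$; then for $k$ large the trial step $\alpha_{k-1}/\delta$ must have been rejected, yielding a point $y_{k-1}\in\hat X_{k-1}^{\hat I}$ and, after a further extraction on the witnessing objective $\hat j\in\hat I$, a violated Armijo-type inequality. The Mean-value Theorem, continuity of $J_F$ and boundedness of the descent direction $d_{\Omega,k-1}^{\hat I}$ then deliver $-(1-\beta)\bar\varepsilon>0$, the desired contradiction. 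The main obstacle, and the only genuinely new ingredient with respect to Proposition \ref{prop::conv-lim-po}, is precisely the subset-finiteness extraction combined with the monotonicity $\theta_\Omega^I(\cdot)\le\theta_\Omega(\cdot)$ that lets failure of full-$F$ Pareto-stationarity propagate to any $\hat I$ the algorithm happens to pick infinitely often; without that monotonicity one could not guarantee the uniform negativity of $\theta_\Omega^{\hat I}(x_{k-1})$ needed to run the \texttt{FMOPG}-style contradiction.
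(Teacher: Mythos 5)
Your overall strategy---run the \texttt{FMOPG}-style contradiction on a constant subset $\hat I$ extracted by finiteness, using the monotonicity $\theta_\Omega^I(\cdot)\le\theta_\Omega(\cdot)$ to propagate non-stationarity of $\bar x$ w.r.t.\ $F$ to whatever subset the algorithm uses---is sound, and it is essentially the structure of Proposition 5 in \cite{cocchi2020convergence}, to which the paper's own proof simply defers. However, there is a genuine gap in how you set it up: you run the machinery on the steps \emph{into} the convergent subsequence, i.e.\ from $x_{k-1}$ to $x_k$ with $k\in K'$, and claim that continuity of $\theta_\Omega^{\hat I}$ yields $\theta_\Omega^{\hat I}(x_{k-1})\le-\bar\varepsilon$. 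But only $x_k\to\bar x$ along $K$; the predecessors $x_{k-1}$ need not converge to $\bar x$ (or to anything), so continuity gives no uniform negativity at them. Without that bound, neither the implication $\alpha_{k-1}\theta_\Omega^{\hat I}(x_{k-1})\to0\Rightarrow\alpha_{k-1}\to0$ nor the final contradiction $-(1-\beta)\bar\varepsilon>0$ can be reached; at best, extracting a further subsequence $x_{k-1}\to\tilde x$ you would prove a statement about $\tilde x$, not about $\bar x$. The fix is to analyse the steps taken \emph{from} $x_k$ at iteration $k$ (these exist for an infinite linked sequence by Definition \ref{def::link-seq}), where $\theta_\Omega^{I_k}(x_k)\le\theta_\Omega(x_k)\to\theta_\Omega(\bar x)<0$ does give the required uniform bound after fixing a constant $\hat I$; this is exactly how Proposition \ref{prop::conv-lim-po} and the proof in \cite{cocchi2020convergence} are arranged.

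Two further points. First, the remainder is not literally ``a transcription'' of Proposition \ref{prop::conv-lim-po}: that proof's derivation of $\alpha_k\theta_\Omega^I(x_k)\to0$ exploits $X_{k+1}=X_k\cup\{x_{k+1}\}$, i.e.\ that all past iterates (and $x_0$) remain in the sets, which is false for \texttt{FPGA}, where dominated points are discarded. You need instead the \texttt{FPGA}/\texttt{FSDA} invariant that every discarded point is dominated by a retained one---the same invariant your boundedness step tacitly uses when ``chaining back to $x_0$'', since $x_0$ itself may leave the sets; note also that Corollary \ref{cor::non-dom-new-po} is a statement about \texttt{FMOPG} sequences and cannot be invoked verbatim here (use Proposition \ref{prop::fin-ter-B-FALS} together with the update at Line \ref{line::point-insert}). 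Second, the single ingredient the paper's short proof actually adds to \cite{cocchi2020convergence} is ruling out that the rejected trial step $\alpha/\delta$ was rejected for infeasibility: for $k$ large one has $\alpha/\delta\le1$, and convexity of $\Omega$ together with feasibility of the direction makes the trial point feasible, so the Armijo-type condition must be the violated one. Your sketch jumps straight to ``a violated Armijo-type inequality''; a faithful transcription of Proposition \ref{prop::conv-lim-po} would supply this step, but it should be made explicit, as it is precisely the new content of the paper's argument.
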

\begin{proof}
	The proof is almost identical to the one of Proposition 5 in \cite{cocchi2020convergence}. There is only one difference. After proving that 
	\begin{equation}
		\label{eq::alpha-lim}
		\lim_{\substack{k\to\infty \\ k \in K}}\alpha_{k + 1} = 0
	\end{equation}
	\cite[Equation 22]{cocchi2020convergence}, where $K$ indicates a subsequence, the \texttt{FSDA} authors consider sufficiently large values of $k$ such that $\alpha_{k + 1} < \alpha_0$. In this case, the steps of \texttt{FALS} and the definition of $X_k$, $k \in K$, imply that there exists $y_k \in X_k$ such that 
	\begin{equation*}
		F_I(y_k) + \mathbf{1}\beta\frac{\alpha_{k + 1}}{\delta}\theta^I(x_k) < F_I\left(x_k + \frac{\alpha_{k + 1}}{\delta}d_k^I\right).
	\end{equation*}

	With respect to \texttt{FALS}, \texttt{B-FALS} has an additional stopping criterion: the step size must lead to a point that is feasible for Problem \eqref{eq::mo-prob}. In this context, $\alpha_{k + 1} / \delta \le \alpha_0$ might not have been selected because the point $x_k + (\alpha_{k + 1} / \delta)d_{\Omega k}^I \not \in \Omega$. However, through a little modification, we can handle this additional stopping criterion. 
	
	First of all, Equation \eqref{eq::alpha-lim} still holds: the proof of this statement is the same provided in \cite{cocchi2020convergence}. Then, we can consider sufficiently large values of $k$ such that 
	\begin{equation}
		\alpha_{k + 1} < \frac{\alpha_{k + 1}}{\delta} \le 1.
	\end{equation}
	In this way, since $\Omega$ is convex and $d_{\Omega k}^I$ is a feasible direction by construction, the points produced by the two step sizes are feasible, i.e., the \texttt{B-FALS} feasibility stopping criterion is satisfied. Then, the steps of \texttt{B-FALS} and the definition of $X_k$, $k \in K$, imply that there exists $y_k \in X_k$ such that 
	\begin{equation*}
		F_I(y_k) + \mathbf{1}\beta\frac{\alpha_{k + 1}}{\delta}\theta_\Omega^I(x_k) < F_I\left(x_k + \frac{\alpha_{k + 1}}{\delta}d_{\Omega k}^I\right).
	\end{equation*}
	From this point forward, we can follow the remainder of the proof of Proposition 5 in \cite{cocchi2020convergence} in order to prove the thesis.
\end{proof}
}

\end{document}